\DeclareMathAlphabet{\mathbf}{T1}{ppl}{bx}{n}
\DeclareMathAlphabet{\mathrm}{T1}{ppl}{m}{n}
\numberwithin{equation}{section}
\newcommand\note[1]%
\def\({\left(}
\def\){\right)}
\def\<{\left<}
\def\>{\right>}
\newtheorem{theorem}{Theorem}[section]
\newtheorem{proposition}[theorem]{Proposition}
\newtheorem{lemma}[theorem]{Lemma}
\newtheorem{definition}[theorem]{Definition}
\newtheorem{corollary}[theorem]{Corollary}
\theoremstyle{definition}
\newtheorem{example}[theorem]{Example}
\newtheorem{remark}[theorem]{Remark}
\newcommand\lie{\mathfrak}
\renewcommand\t{\lie{t}}
\newcommand\g{\lie{g}}
\newcommand{\crit}{\mathrm{crit}}
\newcommand\bb[1]{{\text{\bf#1}}}
\newcommand\Z{\bb{Z}}
\newcommand\R{\mathbb{R}}
\newcommand\C{\mathbb{C}}
\newcommand\J{\mathcal{J}}
\newcommand\cO{\mathcal{O}}
\newcommand     {\comment}[1]   {}
\newcommand{\mute}[2] {}
\newcommand     {\printname}[1] {}
\newcommand\func[1]{\operatorname{\mathrm{#1}}}
\newcommand\funclim[1]{\operatorname*{\mathrm{#1}}}
\renewcommand\deg{\func{deg}}
\renewcommand\dim{\func{dim}}
\renewcommand\exp{\func{exp}}
\newcommand\im{\func{im}}
\renewcommand\inf{\func{inf}}
\renewcommand\ker{\func{ker}}
\renewcommand\lim{\funclim{lim}}
\renewcommand\sup{\func{sup}}
\newcommand\sur{\mathrel{\to\kern-1.8ex\to}}
\newcommand\iso{\mathrel{\hookrightarrow\kern-1.8ex\to}}
\newcommand\longhookrightarrow{\lhook\joinrel\longrightarrow}
\newcommand\longsur{\mathrel{\longrightarrow\kern-1.8ex\to}}
\newcommand\longiso{\mathrel{\longhookrightarrow\kern-1.8ex\to}}
\renewcommand\supset{\supseteq}
\begin{document}

\bibliographystyle{amsalpha}
\date{\today}
\title{Topology of generalized complex quotients}

\author{Thomas Baird, Yi Lin}
\maketitle

     {\Small         Dedicated to Prof. Victor Guillemin on the occasion of his seventieth birthday.}

\begin{abstract} Consider the Hamiltonian action of a torus on a compact twisted generalized complex manifold $M$. We first
observe that Kirwan injectivity and surjectivity hold for ordinary equivariant cohomology in this setting. Then
we prove that these two results hold for the twisted equivariant cohomology as well.

\end{abstract}
\section{Introduction}

Kirwan injectivity and surjectivity are two important results in equivariant symplectic geometry. Recall that for a symplectic manifold $(M, \omega)$, an action by a connected Lie group $G$ on $(M,\omega)$ is called Hamiltonian if it is regulated by a moment map $\mu: M \rightarrow \lie{g}^*$ taking values in the dual of the Lie algebra of $G$. Contracting by $\xi \in \lie{g}$ produces a real valued function $\mu^{\xi}:M \rightarrow \R$ called a component of the moment map. If $G$ is compact, then for any $\xi \in \lie{g}$, $\mu^{\xi}$ is a Morse-Bott function and may be used to study the equivariant topology of $M$. We will mostly focus on the case that $G =T$ is a torus.

In \cite{Kir86}, using ideas of Atiyah-Bott \cite{AB82}, Kirwan demonstrated that a Hamiltonian action on a compact symplectic manifold $M$ is equivariantly formal.  In particular, the equivariant cohomology of $M$ with rational coefficients satisfies a noncanonical isomorphism $$H_G^*(M) \cong H^*(M) \otimes H^*(BG)$$ as graded $H^*(BG)$-modules, where $BG$ is the classifying space for $G$. Furthermore, if $G=T$ is a torus, and $i: M^T \hookrightarrow M$ denotes inclusion of the fixed point set, the localization map in equivariant cohomology $i^*: H_T^*(M) \rightarrow H_T^*(M^T)$ is an injection, a result known as Kirwan injectivity. Her proof uses Morse theory of a component $\mu^{\xi}$ of the moment map.

Kirwan also showed that the map $\kappa: H_G(M) \rightarrow H_G(\mu^{-1}(0))$ induced by inclusion is a surjection. This result is known as Kirwan surjectivity and the map $\kappa$ is known as the Kirwan map. If $0$ is a regular value of $\mu$, then $H_G(\mu^{-1}(0)) \cong H(M // G)$, where $M//G = \mu^{-1}(0)/G$ is the symplectic quotient, so $H(M//G)$ is describable as a quotient ring $H_G(M)/ \ker(\kappa)$.

Kirwan's original proof of surjectivity involved studying the Morse theory of the norm square of the moment map $|| \mu ||^2$, which has minimum $\mu^{-1}(0)$.  In fact $|| \mu||^2$ is not Morse-Bott, but instead satisfies Kirwan's minimal degeneracy condition, which allows the basic constructions of Morse theory to be carried out.

Modern proofs of Kirwan surjectivity have avoided these technicalities. In \cite{TW98}, Tolman and Weitsman computed the kernel of $\kappa$ for torus actions using the honest Morse-Bott functions $\mu^{\xi}$, rather than $||\mu||^2$, the principle being that the kernel of $\kappa$ is built up of contributions from each circle in the torus. In \cite{Gol02}, Goldin used their ideas to produce a simplified proof of Kirwan surjectivity for torus actions, using circles actions and reduction in stages.  

Goldin's proof contains a gap, which was resolved by Ginzburg-Guillemin-Karshon  (\cite{GGK02} appendix G). They introduce the notion of a nondegenerate abstract moment map, which abstracts the relevant Morse-theoretic properties from the symplectic case, and prove Kirwan's theorems in this general setting.

In this paper, we generalize Kirwan injectivity and surjectivity to Hamiltonian actions on compact generalized complex manifolds, in the sense of Lin-Tolman \cite{LT05}. Generalized complex (GC) manifolds were introduced by Hitchin in \cite{H02} and developed by Gualtieri in his thesis \cite{Gua03}. They form a common generalization of both complex and symplectic manifolds and so are well suited to the study of Mirror Symmetry and conformal field theory. Generalized complex manifolds can also incorporate a twist by a closed 3-form $H \in \Omega^3(M)$. When $H$ is integral it may be interpreted as the curvature of a gerbe over $M$ and is known in the physics literature as the Neveu-Schwartz 3-form flux.

In the presence of a twisting $H$, it becomes interesting to study the twisted de Rham cohomology of $M$, $H(M; H)$, which is defined to be the cohomology of the complex consisting of the usual de Rham forms $\Omega(M)$ with a twisted differential $d+ H\wedge$. For example, Gualtieri \cite{Gua04} showed that for an $H$-twisted generalized K\"ahler manifold $M$, $H(M; H)$ inherits a Hodge decomposition and in Kapustin-Li \cite{KL04}, $H(M;H)$ is identified as the BRST cohomology of states for the associated conformal field theory.

In \cite{LT05}, Lin-Tolman extended the notion of Hamiltonian actions and reduction in symplectic geometry to the realm of generalized complex geometry. In the presence of a twisting 3-form $H \in \Omega^3(M)$,  their construction involves a generalized moment map $\mu: M \rightarrow \lie{g}^*$ and a moment 1-form $\alpha \in (\Omega^1(M) \otimes \lie{g}^*)^G$ for which $H+\alpha$ is an equivariantly closed 3-form in the Cartan model (c.f. \cite{GS99} ).
This construction turns out to be something very natural in physics. It has been shown by Kapustin and Tomasiello \cite{KT06}
that the mathematical notion of Hamiltonian actions on twisted generalized K\"ahler manifolds is in perfect agreement with the physical notion of general $(2,2)$ gauged sigma models with three-form fluxes.

 Inspired by Atiyah-Segal \cite{AS05}, the second author in  \cite{Lin07} used $H+\alpha$ to define the twisted equivariant cohomology, $H_T(M;\eta+\alpha)$. The basic properties of the twisted equivariant cohomologies were studied in \cite{Lin07} using Hodge theory, especially in the case of Hamiltonian actions on compact generalized K\"ahler manifolds. In the current paper we study the twisted equivariant cohomology using Morse theory, in the more general case of Hamiltonian actions on compact generalized complex manifolds. The following proposition is crucial to our paper.

\begin{proposition} \label{moment-map-Morse-Bott}
Consider the Hamiltonian action of a (compact) torus $T$ on a compact twisted GC-manifold $M$ with a generalized moment map $\mu: M \rightarrow \lie{t}^*$. Then $\mu$ is a nondegenerate abstract moment map in the sense of \cite{GGK02} (see Definition \ref{absmom}).
\end{proposition}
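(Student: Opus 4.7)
The plan is to verify, one by one, the defining conditions of a nondegenerate abstract moment map recalled in Definition~\ref{absmom}. These amount to: (a) $T$-invariance of $\mu$; (b) for every subtorus $H\subset T$ and every connected component $F$ of the fixed set $M^H$, the composition $F\hookrightarrow M \xrightarrow{\mu} \t^*\twoheadrightarrow\lie{h}^*$ is locally constant; and (c) for every $\xi\in\t$, the component $\mu^\xi=\langle\mu,\xi\rangle$ is a Morse--Bott function whose critical set coincides with the zero locus of the fundamental vector field $\xi_M$.

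Conditions (a) and (b) should be essentially immediate from the Lin--Tolman definition. Invariance is built in. For (b), I would use the fundamental identity relating $d\mu^\xi$ to the fundamental vector field $\xi_M$ together with the moment $1$-form $\alpha^\xi$, applied in the Cartan-model equivariant closedness of $H+\alpha$. Both $\xi_M$ and $\iota_{\eta_M}\alpha^\xi$ (for $\eta\in\lie{h}$) vanish along $F$ whenever $\xi\in\lie{h}$, so the differential of $\mu^\xi|_F$ is zero; letting $\xi$ range over $\lie{h}$ gives the required constancy. The same identity also identifies $\crit(\mu^\xi)$ with the zero set of $\xi_M$.

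The substantive step is the Morse--Bott property in (c). I would argue locally at each point $p$ with $\xi_M(p)=0$. The linear $T$-representation on $T_pM$ splits into a trivial piece, equal to $T_p(M^{\exp(\R\xi)})$, and a sum of nontrivial weight spaces. My plan is to compute the Hessian of $\mu^\xi$ at $p$ directly from the Lin--Tolman equation, using that the generalized complex structure $\J$ at $p$ commutes with the $T$-action and is a nondegenerate endomorphism of $T_pM\oplus T_p^*M$ squaring to $-1$. Pairing $\J$ with the weight decomposition, analogously to the symplectic calculation $\mathrm{Hess}(\mu^\xi)(v,w)=\omega(L_\xi v,w)$, should force the Hessian of $\mu^\xi$ to be nondegenerate on the nontrivial weight spaces and to vanish on $T_p(M^{\exp(\R\xi)})$. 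This yields the Morse--Bott property with the correct critical set.

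The main obstacle I anticipate is handling the twist cleanly. The moment $1$-form $\alpha$ enters the defining identity and contributes to $d\mu^\xi$, so I must verify that, at the level of second-order Taylor expansions at $p$, it does not spoil the quadratic picture used to deduce nondegeneracy of the Hessian. Averaging over $T$ and exploiting that $\alpha$ is $T$-invariant should reduce this to a linear-algebra statement in the weight decomposition of $(T_pM\oplus T_p^*M,\J)$; the equivariant closedness of $H+\alpha$ constrains the value of $\alpha$ and its first jet at fixed points enough that the contribution of $\alpha$ can be absorbed, leaving a manifestly nondegenerate Hessian on the normal directions. Once this linear computation is in place, the three conditions combine to give the proposition.
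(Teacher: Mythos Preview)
Your plan has a genuine gap at the step you call ``essentially immediate.'' From $\J d\mu^\xi = -\xi_M - \alpha^\xi$ one inclusion of $\crit(\mu^\xi)=\{\xi_M=0\}$ is indeed formal: if $(d\mu^\xi)_p=0$ then, since $\xi_M$ and $\alpha^\xi$ live in the complementary summands $TM$ and $T^*M$, both vanish at $p$. The other inclusion is the whole difficulty. At a point $p$ with $(\xi_M)_p=0$ the identity only gives $\J(d\mu^\xi)_p=-\alpha^\xi_p$, and nothing you have written forces $\alpha^\xi_p=0$. Your argument for (b) is circular: $\iota_{\eta_M}\alpha^\xi$ vanishes on $F$ trivially because $\eta_M|_F=0$, which says nothing about $\alpha^\xi|_F$ as a $1$-form. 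Equivariant closedness of $H+\alpha$ yields only $d\alpha^\xi=\iota_{\xi_M}H$ and $\iota_{\eta_M}\alpha^\xi=\iota_{\xi_M}\alpha^\eta$, neither of which constrains $\alpha^\xi$ at a fixed point. The same unproved vanishing is what you need to ``absorb'' the first jet of $\alpha$ in your Hessian computation. Since compactness of $M$ is essential (see the remark after Lemma~\ref{crticial-set-equal-fixed-point-set}), no purely local or linear-algebraic argument can succeed.

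The paper supplies this missing ingredient by a completely different mechanism. One first shows (Lemma~\ref{trivial-Ham-action}) that a \emph{trivial} Hamiltonian torus action on a compact twisted GC manifold forces $d\mu^\xi=\alpha^\xi=0$: choose a compatible generalized almost K\"ahler structure, rewrite the moment identity via the induced bihermitian data as $I_\pm^*d\mu^\xi=\alpha^\xi$, note $d\alpha^\xi=0$ so that locally $\mu^\xi$ is the real part of an $I_\pm$-pseudoholomorphic function, and invoke the maximum principle (Appendix~\ref{maximum-principle}). Applying this to each component of $M^{T_1}$, which is a compact GC submanifold by Proposition~\ref{fixed-points-submfold} carrying an induced trivial Hamiltonian action, gives $\alpha^\xi|_{M^{T_1}}=0$ and hence $\{\xi_M=0\}\subset\crit(\mu^\xi)$. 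With this in hand, the Morse--Bott nondegeneracy is obtained not by a weight-space computation but through the Poisson bivector $\beta$ underlying $\J$: from $\xi_M=-\beta\,d\mu^\xi$ one linearizes at a critical point to get $\mathcal{A}=\beta_p\circ\mathrm{Hess}_p(\mu^\xi)$, whence $\ker\mathrm{Hess}_p(\mu^\xi)\subset\ker\mathcal{A}=T_p\crit(\mu^\xi)$.
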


Proposition \ref{moment-map-Morse-Bott} paraphrases a result from Nitta's very interesting recent work \cite{NY07}. Nitta's result was known by the authors to hold under additional hypotheses but his general result came out as a welcome surprise. Because Nitta's theorem is central to our work, we provide a self-contained proof of Proposition \ref{moment-map-Morse-Bott} in Section 6. Our proof is a variation of Nitta's which has the advantages of being somewhat simpler and of extending to some examples of noncompact manifolds $M$. One key ingredient in Nitta's proof is the maximum principle for pseudo-holomorphic functions on almost complex manifolds, for which we provide a proof in Appendix \ref{maximum-principle}.

In view of the above proposition, results from \cite{GGK02} prove that Kirwan injectivity and surjectivity hold for ordinary equivariant cohomology. In our paper, we use parallel arguments to prove \emph{twisted} versions of equivariant formality, Kirwan injectivity, and Kirwan surjectivity:

\begin{theorem}[Equivariant formality]\label{eqformality}
Consider the Hamiltonian action of a compact connected group $G$ on a compact $H$-twisted generalized complex manifold $M$. Then we have a non-canonical isomorphism $$H_G(M;H +\alpha) \cong H(M;H)\otimes H(BG),$$ where $\alpha$ is the moment one form of the Hamiltonian action.
\end{theorem}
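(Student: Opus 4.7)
The plan is to adapt the classical Morse-theoretic argument for equivariant formality to the twisted equivariant Cartan complex, using that by Proposition~\ref{moment-map-Morse-Bott} the components of the moment map are Morse-Bott functions whose critical submanifolds and negative normal bundles have the same good structure as in the symplectic case. First I would reduce to $G=T$ a torus by a Weyl-group averaging argument: because the moment one-form $\alpha$ is $G$-invariant, the Weyl group $W=N_G(T)/T$ acts on the twisted $T$-equivariant Cartan complex, yielding $H_G(M;H+\alpha)\cong H_T(M;H+\alpha)^W$, which combined with $H(BG)=H(BT)^W$ reduces the theorem to the torus case.

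Assuming $G=T$, I would choose a generic $\xi\in\t$ so that $\mu^\xi$ is Morse-Bott with critical set exactly $M^T$. By the nondegenerate abstract moment map property of \cite{GGK02}, each critical component $F\subset M^T$ carries a canonical $T$-complex structure on its negative normal bundle $\nu^-_F$, and in particular the real rank $\lambda_F$ is even. Order the critical values $c_1<\cdots<c_N$ and set $M_k=(\mu^\xi)^{-1}(-\infty,c_k+\epsilon]$. The heart of the argument is an induction on $k$ showing that $H_T(M_k;H+\alpha)$ is a free $H(BT)$-module with the expected Poincar\'e series. The inductive step is to show that the restriction $H_T^*(M_k;H+\alpha)\to H_T^*(M_{k-1};H+\alpha)$ is surjective. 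By exactness of the long exact sequence of the pair $(M_k,M_{k-1})$ combined with a twisted equivariant Thom isomorphism, this reduces to injectivity of cup product with the equivariant Euler class $e$ of $\nu^-_{F_k}$ on $H_T(F_k;\iota^*(H+\alpha))$. Since $T$ acts trivially on $F_k$, all interior products $\iota_{\xi_M}$ vanish on $F_k$, so the twisted Cartan complex factors and gives $H_T(F_k;\iota^*(H+\alpha))\cong H(F_k;H|_{F_k})\otimes H(BT)$. The class $e$ is a product of nonzero $T$-weights, hence a non-zero-divisor in the right-hand tensor product, so cup product with $e$ is injective. This completes the induction, and $k=N$ yields the theorem.

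The main obstacle I expect is the twisted equivariant Thom isomorphism used above. Specifically, one must verify that on a $T$-invariant tubular neighborhood $U$ of $F_k$, the twisting form $H+\alpha$ is equivariantly cohomologous to $\pi^*\iota^*(H+\alpha)$, where $\pi\colon U\to F_k$ is the projection. This should follow because $\pi$ is a $T$-equivariant deformation retraction, so any equivariantly closed form on $U$ differs from its pullback by an equivariantly exact form; combined with the standard change-of-twisting isomorphism for cohomologous twists, this identifies the twisted equivariant cohomology of $U$ (relative to its boundary) with the ordinary equivariant Thom complex of $\nu^-_{F_k}$ twisted only by $\iota^*(H+\alpha)$. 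Once this identification is in hand, the rest of the argument is formally the same as the untwisted version of the nondegenerate abstract moment map proof of \cite{GGK02}.
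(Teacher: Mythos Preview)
Your overall strategy---reduce to a maximal torus by Weyl invariance, then run the Atiyah--Bott self-perfecting argument on a generic component of the moment map---is exactly the paper's route (Proposition~\ref{weylinv} plus Theorem~\ref{morseinj}). There is, however, one genuine gap.

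The step ``Since $T$ acts trivially on $F_k$, all interior products $\iota_{\xi_M}$ vanish on $F_k$, so the twisted Cartan complex factors and gives $H_T(F_k;\iota^*(H+\alpha))\cong H(F_k;H|_{F_k})\otimes H(BT)$'' does not follow from the stated premise. Vanishing of the contractions forces $d_T=d$ on $F_k$, so the \emph{untwisted} complex factors; but the twisting restricts to $H|_{F_k}+\alpha|_{F_k}$, and nothing you have said forces the moment one-form $\alpha$ to vanish on the fixed locus. If it does not, the tensor decomposition can fail outright (cf.\ the example following Proposition~\ref{weylinv}, where a trivial $U(1)$-action on $S^1$ with a nonzero degree-one twisting has $H_{U(1)}(S^1;\eta)=0$). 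The same issue undermines your Euler-class argument: the assertion that $e$ is a non-zero-divisor in $H_T(F_k;\iota^*(H+\alpha))$ relies on this factorization (compare Lemma~\ref{atbot}, which explicitly requires the twisting to lie in $\Omega_{T/S}\otimes 1$).

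What is missing is precisely the paper's \emph{compatibility} condition (Definition~\ref{compa}), namely $\alpha^\xi_p=0$ whenever $(\xi_M)_p=0$. This is not automatic from the trivial action; it is Corollary~\ref{compmom}, deduced from Proposition~\ref{moment-map-Morse-Bott} together with the generalized moment map equation $\mathcal{J}\,d\mu^\xi=-\xi_M-\alpha^\xi$: since $\mathrm{Crit}(\mu^\xi)=\{\xi_M=0\}$, on $F_k$ one has $d\mu^\xi=0$, hence $\alpha^\xi=0$. Once you invoke this, your factorization and the Euler-class injectivity go through exactly as in Lemma~\ref{atbot} and Lemma~\ref{patti}, and your argument agrees with the paper's. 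A minor additional point: your Weyl reduction as stated (``$H_T^W$ combined with $H(BT)^W$'') is a bit loose, since the noncanonical tensor splitting need not be $W$-equivariant; the paper handles this via the Leray--Hirsch criterion of Proposition~\ref{lerhirsh}, showing $G$-equivariant formality is equivalent to $T$-equivariant formality (Proposition~\ref{weylinv}).
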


\begin{theorem}[Kirwan injectivity]\label{kirinj}
Let $T$ be a compact torus and let $M$ be a compact $H$-twisted generalized Hamiltonian $T$-space with induced equivariant 3-form $H +\alpha$, and let $i : M^T \rightarrow M$ denote the inclusion of the fixed point set. Then the induced map $$ i^*: H_T(M;H +\alpha) \rightarrow H_T(M^T;H+\alpha) \cong H(M^T;H)\otimes H(BT)$$ is an injection.
\end{theorem}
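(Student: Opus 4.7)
The plan is to adapt the classical Atiyah--Bott--Kirwan Morse-theoretic proof of injectivity to the twisted equivariant Cartan model. Let $\mu:M\to\t^*$ be the generalized moment map. By Proposition~\ref{moment-map-Morse-Bott}, $\mu$ is a nondegenerate abstract moment map, so I can choose a generic $\xi\in\t$ such that $\phi:=\mu^\xi$ is a $T$-invariant Morse--Bott function whose critical set coincides with $M^T$ and whose negative normal bundle $\nu_F^-$ at each critical component $F\subset M^T$ is $T$-equivariantly oriented of even rank $\lambda_F$ and carries only nonzero weights of the $T$-action.

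Filter $M$ by sublevel sets $M^c:=\phi^{-1}(-\infty,c]$ and argue by induction on the critical values of $\phi$ that the restriction $H_T(M^c;H+\alpha)\to H_T(M^c\cap M^T;H+\alpha)$ is injective, while $H_T(M^c;H+\alpha)$ is a free module over $H^*(BT)$. Near a critical value $c$ with critical component $F$, Morse theory provides a $T$-equivariant deformation retraction of $M^{c+\epsilon}$ onto $M^{c-\epsilon}$ with the disk bundle of $\nu_F^-$ attached along its sphere bundle. A twisted equivariant Thom isomorphism
\[
H_T^*(M^{c+\epsilon},M^{c-\epsilon};H+\alpha)\;\cong\;H_T^{*-\lambda_F}(F;H+\alpha)
\]
then turns the long exact sequence of the pair into
\[
\cdots\to H_T^{*-\lambda_F}(F;H+\alpha)\xrightarrow{\cup e(\nu_F^-)} H_T^*(M^{c+\epsilon};H+\alpha)\to H_T^*(M^{c-\epsilon};H+\alpha)\to\cdots.
\]
Since the $T$-weights on $\nu_F^-$ are all nonzero, the equivariant Euler class $e(\nu_F^-)$ is not a zero divisor on the free $H^*(BT)$-module $H_T(F;H+\alpha)$, which is free by Theorem~\ref{eqformality} applied to $F$ (on which $T$ acts trivially). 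Hence the long exact sequence splits into short exact sequences, giving simultaneously the freeness of $H_T(M^{c+\epsilon};H+\alpha)$ and the injectivity of the restriction to $M^{c+\epsilon}\cap M^T$, from the corresponding data on $M^{c-\epsilon}$ together with the tautological injection $H_T(F;H+\alpha)\hookrightarrow H_T(M^T;H+\alpha)$. Taking $c$ larger than $\max\phi$ yields the theorem.

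The main obstacle I anticipate is establishing the twisted equivariant Thom isomorphism cleanly; this rests on a homotopy-invariance lemma saying that equivariantly closed $3$-forms differing by an equivariantly exact term determine isomorphic twisted equivariant cohomologies. Concretely, one must show that the restriction of $H+\alpha$ to an equivariant tubular neighborhood of $F$ is equivariantly cohomologous to the pullback of its restriction to $F$, so that the Thom collapse map identifies relative twisted equivariant cohomology with the twisted equivariant cohomology of $F$ shifted by the Thom class. Once this package and the equivariant Morse lemma are in place, the Euler-class non-zero-divisor argument proceeds essentially as in \cite{GGK02} and \cite{TW98}, and the proof reduces to the classical inductive scheme.
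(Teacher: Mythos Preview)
Your proposal follows essentially the same Morse-theoretic scheme as the paper: filter by sublevel sets of a generic moment map component, use the twisted Thom isomorphism (Proposition~\ref{tocome}) together with the Euler-class non-zero-divisor argument (Lemma~\ref{atbot}) to obtain equivariant perfection, and conclude. The paper packages the endgame slightly differently: rather than carrying injectivity of $i^*$ through the induction directly, it first establishes $\eta$-equivariant formality (Theorem~\ref{morseinj}), hence that $H_T(M;H+\alpha)$ is a free and therefore torsion-free $\hat S\t^*$-module, and then invokes the localization theorem (Theorem~\ref{local}) to get injectivity in one stroke. Your direct induction on injectivity is equally valid and arguably more self-contained, since it avoids the appeal to localization.

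One point to make explicit: to conclude that $H_T(F;H+\alpha)$ is free over $H(BT)$ and that the Euler class acts as a non-zero-divisor there, you need the restriction of $\alpha$ to $F$ to vanish, so that the twisting over $F$ is simply $H|_F$. This is precisely the compatibility condition of Definition~\ref{compa}, established in the generalized complex Hamiltonian setting by Corollary~\ref{compmom}; once you have it, freeness follows immediately from Example~\ref{trivialact} (citing Theorem~\ref{eqformality} here is heavier than necessary and looks circular), and the Euler-class argument of Lemma~\ref{atbot} applies with $S=T$. Your anticipated obstacle concerning the twisted Thom isomorphism is handled in the paper exactly as you outline, via Lemma~\ref{carumba} and Proposition~\ref{tocome}.
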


\begin{theorem}[Kirwan surjectivity]\label{kirsur}
Let $M$ be a compact $H$-twisted generalized Hamiltonian $T$-space with induced equivariant 3-form $H +\alpha$ and moment map $\mu$, where $T$ is a compact torus. For $c \in \lie{t}^*$ a regular value of $\mu$ we have:\begin{equation}
H_T(M;H +\alpha) \rightarrow H(\mu^{-1}(c)/ T ; \tilde{H})
\end{equation}
is a surjection, where $\tilde{H}$ is the twisting $3$-form inherited through reduction.
\end{theorem}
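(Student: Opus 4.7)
The plan is to follow the strategy of Goldin and Ginzburg-Guillemin-Karshon in the untwisted case, adapted to the twisted equivariant setting. The first step is to reduce to the case $T = S^1$ by induction on $\dim T$. Choose a generic splitting $T = S^1 \times T'$ for which $c^{S^1}$ is a regular value of the component moment map $\mu^{S^1}$, and use the twisted generalized complex reduction of Lin-Tolman to produce a compact twisted generalized complex Hamiltonian $T'$-space $M_1 = (\mu^{S^1})^{-1}(c^{S^1})/S^1$ carrying an inherited equivariant $3$-form $\tilde H_1 + \tilde\alpha_1$. The Kirwan map for $T$ factors as
\begin{equation*}
H_T(M;H+\alpha) \xrightarrow{\kappa_{S^1}} H_{T'}(M_1; \tilde H_1 + \tilde \alpha_1) \xrightarrow{\kappa_{T'}} H(\mu^{-1}(c)/T; \tilde H),
\end{equation*}
so it suffices by the inductive hypothesis to prove surjectivity when $T = S^1$.

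For $T = S^1$, I would run a Morse-theoretic argument using $\mu$ and $-\mu$ as Morse functions. By Proposition~\ref{moment-map-Morse-Bott}, $\mu$ is a nondegenerate abstract moment map, so its critical set equals $M^{S^1} = \bigsqcup_i F_i$ with each $F_i$ a compact submanifold, and the negative normal bundle $\nu^-_i$ is an $S^1$-equivariantly oriented real vector bundle whose equivariant weights are all non-zero. Consider the sublevel filtration $M_{\le t} = \mu^{-1}((-\infty, t])$ and the long exact sequence of the pair $(M_{\le t+\eps}, M_{\le t-\eps})$ in twisted equivariant cohomology as $t$ crosses a critical value. The main claim is that the restriction
\begin{equation*}
H_{S^1}(M_{\le t+\eps}; H+\alpha) \longrightarrow H_{S^1}(M_{\le t-\eps}; H+\alpha)
\end{equation*}
is surjective at every critical crossing. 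Iterating from $t \gg \sup \mu$ down to $t = c + \eps$ gives surjectivity onto $H_{S^1}(M_{\le c + \eps}; H+\alpha)$; a parallel argument applied to $-\mu$ then yields surjectivity onto $H_{S^1}(\mu^{-1}([c-\eps, c+\eps]); H+\alpha)$, which deformation retracts onto $\mu^{-1}(c)$ since $c$ is regular. Because $S^1$ acts freely on $\mu^{-1}(c)$, this group equals $H(\mu^{-1}(c)/S^1; \tilde H)$ by the standard identification for free actions, proving the theorem.

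The hard part will be the critical-crossing surjection. It requires two ingredients. First, a \emph{twisted} equivariant Thom isomorphism identifying the excised relative group $H_{S^1}(M_{\le t+\eps}, M_{\le t-\eps}; H+\alpha)$ with $H_{S^1}(F_i; (H+\alpha)|_{F_i})$ shifted by $\rank(\nu^-_i)$; this should follow from a tubular-neighborhood/Thom-collapse argument, once one verifies an equivariant Poincar\'e-lemma-type statement showing that $(H+\alpha)$ is equivariantly cohomologous on a neighborhood of $F_i$ to the pullback of $(H+\alpha)|_{F_i}$. Second, multiplication by the equivariant Euler class $e(\nu^-_i) \in H^*_{S^1}(F_i)$ must be injective on $H_{S^1}(F_i; (H+\alpha)|_{F_i})$. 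Since $S^1$ acts trivially on $F_i$, a spectral sequence argument using the $u$-adic filtration of the Cartan complex (together with Theorem~\ref{eqformality} at the level of $F_i$) shows that the leading-in-$u$ term of $e(\nu^-_i)$, namely the product of the non-zero equivariant weights times a power of $u$, acts injectively on the associated graded, and therefore $e(\nu^-_i)$ itself acts injectively on $H_{S^1}(F_i; (H+\alpha)|_{F_i})$. Combining these two ingredients forces the connecting homomorphism of the pair to vanish, yielding the desired surjection; induction across critical values then completes the proof.
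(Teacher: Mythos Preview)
Your circle case is essentially correct and matches the paper's core lemma (Lemma~\ref{surj} applied with $f=(\mu-c)^2$); the Euler-class injectivity you invoke is Lemma~\ref{atbot}. One point to make explicit: that argument needs $\alpha^{\xi}|_{F_i}=0$, so that $(H+\alpha)|_{F_i}$ lies in $\Omega(F_i)\otimes 1\subset\Omega_{S^1}(F_i)$. This is the compatibility condition (Definition~\ref{compa}, Corollary~\ref{compmom}); without it the leading-$u$-term filtration argument does not go through as written.

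The genuine gap is in the inductive step. To form $M_1=(\mu^{S^1})^{-1}(c^{S^1})/S^1$ as a compact twisted GC manifold and invoke Lin--Tolman reduction (Proposition~\ref{Twisted Complex Reduction}), you need $S^1$ to act \emph{freely} on the level set. This cannot be arranged by a generic choice of $S^1\subset T$: points on $(\mu^{S^1})^{-1}(c^{S^1})$ may have positive-dimensional $T$-isotropy, and a generic circle will meet such isotropy in a nontrivial finite group, so $M_1$ is at best an orbifold and your inductive hypothesis (stated for manifolds) does not apply. This is exactly the gap in Goldin's original argument that the introduction refers to.

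The paper avoids this by following Ginzburg--Guillemin--Karshon and never quotienting at intermediate stages. One chooses a flag $T_1\subset\cdots\subset T_n=T$ with each $\mu_k=\pi_{\lie t_k^*}\circ\mu$ regular at $0$ (Proposition~\ref{immunology}) and factors
\[
H_T(M;\eta)\to H_T(\mu_1^{-1}(0);\eta)\to\cdots\to H_T(\mu_n^{-1}(0);\eta)=H_T(\mu^{-1}(0);\eta)
\]
through the $T$-equivariant cohomology of the nested \emph{submanifolds} $\mu_k^{-1}(0)\subset M$. Each arrow is surjective by applying Lemma~\ref{surj} to $(\mu^{\xi_{k+1}})^2$ on $\mu_k^{-1}(0)$, using the full $T$-equivariant Euler class. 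The price is verifying that $\mu^{\xi_{k+1}}|_{\mu_k^{-1}(0)}$ is Morse--Bott with the correct critical set; this is Proposition~\ref{immunology}, proved in Appendix~\ref{nondegabs}, and it is here that the invariant almost complex structure (supplied in the GC case by Nitta) is actually used.
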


These results are established more generally for compact nondegenerate abstract moment maps with \emph{compatible}  equivariantly closed 3-form. We expect that Kirwan surjectivity remains true for the Hamiltonian action of a compact connected Lie group on a compact twisted generalized complex manifold, and we hope to return to this question in a later work.

Non-symplectic examples of Hamiltonian torus actions on generalized complex manifolds to which our results may be applied have been constructed in \cite{Lin07}
and \cite{Lin07b}. New examples constructed using surgery on toric varieties will be included in a forthcoming paper by the authors. We would also like to mention that a stronger version of Theorem \ref{eqformality} was previously proven for the case of Hamiltonian action of a compact Lie group on a generalized K\"ahler manifold in \cite{Lin07}, using the $\partial \bar{\partial}$ lemma and generalized Hodge theory.

We discuss now one possible application of our results. Suppose $(M,\J)$ is a twisted generalized complex manifold with a Hamiltonian $G$ action, and suppose $L$ is the $\sqrt{-1}$-eigenbundle of
$\J$. Then $L$ has a natural Lie algebroid structure, c.f. \cite{Gua03}. Moreover, the existence of a Hamiltonian $G$ action induces
a Lie algebra map $\frak{g} \rightarrow C^{\infty}(L)$. So there is an equivariant version of the Lie algebroid cohomology associated to the Hamiltonian $G$ action, in the sense of \cite{BCRR05}. The twisted equivariant cohomology studied in the current paper is closely related to the equivariant Lie algebroid cohomology. Indeed, they are canonically isomorphic to each other if $M$ is a generalized Calabi-Yau manifold satisfying the $\overline{\partial}\partial$-lemma. It is well known that information on the deformation of generalized complex structures is contained in the Lie algebroid cohomology of $L$. Therefore, the results established in this paper may indicate a close relationship between the deformation theory of the generalized complex manifold $M$ and that of its generalized complex quotients.


The layout of the paper is as follows. Section \ref{t-eq-cohomology} reviews twisted equivariant cohomology and proves a few lemmas for later use. Section \ref{morese theory} uses Morse theory to prove twisted Kirwan injectivity and surjectivity for nondegenerate abstract moment maps. Section \ref{g.c.geometry.review} gives a quick review of generalized complex geometry. Section \ref{g-moment-map} recalls the definition of generalized moment maps and proves Proposition \ref{moment-map-Morse-Bott}. Section \ref{main-results} establishes the main results of this paper, namely, Theorem \ref{eqformality}, \ref{kirinj},  \ref{kirsur}. Appendix \ref{maximum-principle} proves the maximum principle for pseudoholomorphic functions on almost complex manifolds. Appendix \ref{nondegabs} establishes a key Lemma about nondegenerate abstract moment maps postponed from \S \ref{morese theory}. Appendix \ref{apptwis} compares several versions of twisted equivariant cohomology existed in the literature. Appendix \ref{commalg} collects some commutative algebra results that we make frequent use of throughout, but particularly in Section \ref{t-eq-cohomology}.

\,\,\,\,\,\,\,\,\,\,\,\,\,\,\,\,\,\,\,\,\,\,\,\,\,\,\,\,\,\,\,\,\,\,\,\,\,\,\,\,\,\,\,\,\,\,
{\bf Acknowledgement}: The authors would like to thank Marco Gualtieri, Yael Karshon, Eckhard Meinrenken, Paul Selick, and Ping Xu for useful discussions. Y.L. is grateful to Lisa Jeffrey for providing him a Postdoctoral Fellowship at the University of Toronto, where he started his work on the equivariant cohomology theory of GC manifolds.

\section{Review of twisted equivariant cohomology}\label{t-eq-cohomology}

In this section we review twisted equivariant cohomology, as developed in Atiyah-Segal \cite{AS05}, Hu-Uribe \cite{HuUribe06}, Freed-Hopkins-Teleman \cite{FHT02} and Lin \cite{Lin07}.

\subsection{Definitions}
Let $G$ be a compact connected Lie group with Lie algebra $\lie{g}$ and dual $\lie{g}^*$. For $M$ a smooth $G$-manifold, we denote by $\xi_M$ the vector field on $M$ generated by $\xi \in \lie{g}$. The equivariant de Rham complex $(\Omega_G(M), d_G)$ is a differential graded (super)commutative algebra associated to the
$G$-manifold $M$. Here, $$\Omega_G(M) = (\Omega(M) \otimes S \lie{g^*})^G$$ is the space of polynomial functions on $\lie{g}$ taking values
in the space of differential forms $\Omega(M)$, which are equivariant under the induced $G$-action on $\Omega(M)$ and the adjoint action on the symmetric algebra $S \lie{g}^*$, and $d_G$ is defined by extending
linearly the formula $$(d_G( \sigma \otimes P))(\xi) = d \sigma \otimes P(\xi)- \iota_{\xi_M} \sigma \otimes P(\xi)$$ where $\sigma \in \Omega(M), P \in S \lie{g}^*$,
and $\xi \in \lie{g}$.  It comes equipped with a grading
$$ \Omega_G^n(M) = \bigoplus_k ( \Omega^{n-2k}(M) \otimes S^k \lie{g}^*)^G.$$
The equivariant de Rham complex computes the (Borel) equivariant cohomology of $M$ with real coefficients (we refer to \cite{GS99} for more details).

\begin{example}
When $G$ is trivial $(\Omega_G(M), d_G)$ is the usual de Rham complex.
\end{example}

\begin{example}
In the special case that $G=T$ is a compact torus with lie algebra $\lie{t}$,  $T$ acts trivially on $\lie{t}$ we have:$$\Omega_T(M) = \Omega(M)^T \otimes S \lie{t^*}$$ where $\Omega(M)^T$ is the space of $T$-invariant differential forms.
\end{example}

Let $ \hat{\Omega}_G(M)$ denote the direct product $\prod \Omega_G^i(M)$. The differential $d_G$ extends in a natural way to $\hat{\Omega}_G(M)$ and we adopt the convention that $H_G(M)$ is defined to be the cohomology of the complex $(\hat{\Omega}_G(M), d_G)$ (where we have abusively reused $d_G$ to denote its extension to $\hat{\Omega}_G(M)$). It follows that

\begin{equation}\label{homprod}
 H_G(M) := \prod_{i=0}^{\infty} H^i_G(M)
\end{equation}
as opposed to the more conventional direct sum. In the untwisted setting this is not a serious modification, but once twisting is introduced the direct product is much easier to work with. In this context, the equivariant cohomology of a point is $(\hat{S} \lie{g}^*)^G$, the ring of $G$-invariant formal power series on $\lie{g}$.


Given a $d_G$-closed 3-form, $\eta \in \Omega^3_G(M)$, we define a twisted differential $$d_{G,\eta} = d_G + \eta \wedge$$ on $\hat{\Omega}_G(M)$. Because $\eta$ is closed and of odd degree, it follows that $d^2_{G,\eta}=0$ and we define the \textbf{$\eta$-twisted equivariant cohomology}
\[ H_G(M;\eta)=\text{ker} d_{G,\eta}/\text{im}d_{G,\eta}.\]
Because $d_{G,\eta}$ is an odd operator, $H(M;\eta)$ inherits a $\Z_2$-grading from the $\Z$-grading on $\Omega_G(M)$. Because $d_{G,\eta}$ is usually not a derivation, $H_G(M;\eta)$ is usually not a ring, but is instead a module for the untwisted equivariant cohomology ring $H_G(M)$ and hence also for $\hat{S} \lie{g}^*$.

\begin{remark}
The cochain complex we are using to define twisted cohomology differs from those found in \cite{FHT02} and \cite{HuUribe06}, but gives rise to a naturally isomorphic cohomology theory (see Appendix \ref{apptwis}).
\end{remark}

\begin{remark}
We show in Appendix \ref{apptwis} that for a compact manifold $M$ the completion of twisted equivariant cohomology is obtained by extension of scalars from the uncompleted version, i.e.
$$ H_G(M ; \eta) \cong H( \Omega_G(M), d_{G,\eta}) \otimes_{(S\lie{g}^*)^G} (\hat{S}\lie{g}^*)^G$$
\end{remark}

\begin{example}\label{trivialact}
Suppose $G$ acts trivially on $M$. Then any $d$-closed 3-form $ \eta \in \Omega^3(M)$ determines a $d_G$-closed 3-form $ \eta \otimes 1 \in \Omega_G^3(M)$. In this case it is easy to see that $H_G(M;\eta \otimes 1) \cong H(M;\eta) \otimes (\hat{S}\lie{g}^*)^G$ canonically as $(\hat{S}\lie{g}^*)^G$-modules.
\end{example}

\begin{example}\label{trivialact2}
More generally, let $G$ act on $M$ such that a normal subgroup $H \subset G$ acts trivially. Then there is an induced $G/H$ action on $M$ and a chain isomorphism $ \Omega_{G/H}(M) \otimes (S \lie{h}^*)^H \cong \Omega_{G}(M)$. If $\eta \in \Omega_{G/H}^3(M)$ is $d_{G/H}$-closed, then $$ H_{G}(M;\eta \otimes 1) \cong H_{G/H}(M; \eta) \otimes (\hat{S} \lie{h}^*)^H $$ canonically.
\end{example}

We may consider a more general class of twisted complexes using the notion of differential graded modules.
Let $(C^*,\delta) = (\oplus_{k \geq 0} C^k, \delta)$ be a cochain complex graded by the nonnegative integers.  We say that  $(C^*,\delta)$ is a (left) $(\Omega^*_G(M), d_G)$-module, or simply a $\Omega_G(M)$-module, if $C^*$ is a graded module of the graded algebra $\Omega_G^*(M)$ and for all $\alpha \in \Omega_G(M)$ of pure degree and $x \in C^*$, the differential satisfies the identity:
\begin{equation*}
\delta ( \alpha \wedge x) = d_G(\alpha) \wedge x + (-1)^{\deg \alpha} \alpha \wedge \delta(x).
\end{equation*}
The differential $\delta$ extends naturally to a differential on $ \hat{C} := \prod_i C^i$, which by abuse of notation we also call $\delta$.
A closed 3-form $\eta \in \Omega_G(M)$ determines a twisted differential $ \delta_{\eta}:= \delta + \eta \wedge $ on $\hat{C}$ and we define \[H_G(C^*; \eta)=\text{ker} \delta_{\eta}/\text{im}\delta_{\eta}.\] The module structure descends to make $H_G(C^*;\eta)$ a $\Z_2$-graded module for $H_G(M)$.

\begin{example}\label{pairs}
Let $i: A \subset N$ a pair of embedded submanifolds of $M$ preserved by $G$. We use the algebraic mapping cone to define the differential graded complex $(\Omega_G^*(N,A), \delta)$ by
 \[  \Omega_G^n(N,A)= \Omega_G^{n+1}(N) \oplus \Omega_G^{n}(A) \]
with differential $\delta (n, a) =  (-d_G(n), d_G(a) + i^*(n))$ . Then $\Omega_G(N,A)$ is a $(\Omega_G(M), d_G)$-module under the action $x \wedge (n,a) = ( x \wedge n, x \wedge a)$. For $\eta \in \Omega_G^3(M)$ closed, we will use notation: \[H_G(N,A;\eta) = H(\Omega_G(N,A);\eta)\]
Notice that $H_G(N,A ; \eta) = H_G(N,A; j^*\eta)$, where $j: N \rightarrow M$ is the inclusion map.
\end{example}

\subsection{Basic Properties}Twisted cohomology is invariant under quasiisomorphism.

\begin{proposition}\label{doubledip}
Let $\phi: (C^*, \delta) \rightarrow (D^{*+n}, \delta')$ be a degree $n$ quasiisomorphism of $(\Omega_G(M), d_G)$-modules. Then the induced map $ H_G^*(C^*; \eta) \rightarrow H_G^{*+n}(D^*;\eta)$ is a degree $n~mod~2$ isomorphism for all $d_G$-closed $\eta \in \Omega_G^3(M)$.
\end{proposition}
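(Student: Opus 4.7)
The plan is to reduce the statement to an acyclicity claim via the algebraic mapping cone, and then to resolve the acyclicity by an explicit inductive construction of a primitive in the twisted complex.

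First, form the mapping cone $\mathrm{Cone}(\phi) = D^{*+n} \oplus C^{*+1}$, an $(\Omega_G(M), d_G)$-module concentrated in integer degrees bounded below. The action of $\eta \wedge$ on $\mathrm{Cone}(\phi)$ is diagonal in the two summands and anticommutes with $\delta_{\mathrm{Cone}}$ (by the Leibniz rule for the module structure together with $d_G \eta = 0$), so $(\delta_{\mathrm{Cone}} + \eta \wedge)^2 = 0$. A standard long exact sequence argument shows that the induced map $H_G^*(C^*; \eta) \to H_G^{*+n}(D^*; \eta)$ is an isomorphism of $\Z/2$-graded $H_G(M)$-modules if and only if the twisted cone $(\widehat{\mathrm{Cone}(\phi)}, \delta_{\mathrm{Cone}} + \eta \wedge)$ is acyclic. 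So it suffices to show: any bounded-below $\Omega_G(M)$-module $(E^*, \delta_E)$ which is untwisted-acyclic has $(\hat E, \delta_E + \eta \wedge)$ acyclic as well.

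For the acyclicity, take a $\delta_{E, \eta}$-cocycle $x = \sum_k x_k \in \hat E$ with $x_k \in E^k$. Componentwise the cocycle condition reads $\delta_E(x_k) = -\eta \wedge x_{k-2}$ for every $k$, with $x_j = 0$ below the minimal degree of $E^*$. Starting from that minimum, I would construct $y_{k-1} \in E^{k-1}$ inductively so that $\delta_E(y_{k-1}) = x_k - \eta \wedge y_{k-3}$. The key point is that at each stage the right-hand side is $\delta_E$-closed:
\[
\delta_E(x_k - \eta \wedge y_{k-3}) = \delta_E(x_k) + \eta \wedge \delta_E(y_{k-3}) = \delta_E(x_k) + \eta \wedge x_{k-2} = 0,
\]
using the Leibniz identity $\delta_E(\eta \wedge z) = -\eta \wedge \delta_E(z)$ (from $d_G\eta = 0$ and $|\eta| = 3$), the inductive hypothesis $\delta_E(y_{k-3}) = x_{k-2} - \eta \wedge y_{k-5}$, the vanishing $\eta \wedge \eta = 0$ (odd degree), and finally the cocycle equation for $x$. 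Acyclicity of $E^*$ then produces the desired $y_{k-1}$, and assembling $y = \sum_j y_j \in \hat E$ gives $\delta_{E, \eta}(y) = x$.

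The main obstacle is really just the sign-and-degree bookkeeping: setting up the mapping cone so that the $\eta$-twists on $C^*$ and $D^*$ are correctly intertwined with the right signs, and verifying the base case of the induction at the lowest degree of $\mathrm{Cone}(\phi)$, where acyclicity forces the first nonzero component of $x$ to vanish. The overall degree shift of $n \bmod 2$ in the conclusion is then inherited directly from the degree of $\phi$.
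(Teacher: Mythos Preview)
Your proposal is correct and follows essentially the same strategy as the paper: reduce via the mapping cone to showing that an acyclic bounded-below $\Omega_G(M)$-module remains acyclic after twisting, then construct a primitive inductively along the degree filtration. The paper's presentation is slightly more streamlined---rather than solving the componentwise equations $\delta_E(y_{k-1}) = x_k - \eta\wedge y_{k-3}$ and checking closedness at each step, it simply observes that subtracting $\delta_{E,\eta}(b_{i-1})$ from the cocycle raises its lowest degree, and iterates---but the content is the same.
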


\begin{proof}
Using the algebraic mapping cone construction, it suffices to prove that if $(C^*, \delta)$ is acyclic, then so is $(\hat{C}^*, \delta_{\eta})$.

Let $c = c_i + c_{i+1} + c_{i+2}  + ... \in \hat{C}^*$ be $\delta_{\eta}$-closed, where $c_k \in C^k$. Then necessarily $\delta (c_i) = 0$. By acyclicity, there exists $b_{i-1} \in C^{i-1}$ such that $\delta (b_{i-1}) = c_i$, so $$c - \delta_{\eta}(b_{i-1}) = c_{i+1} +(c_{i+2} - \eta \wedge b_{i-1}) + ...$$ has lowest degree term lying in $C^{i+1}$. Iterating the process, we can construct $b = b_{i-1} +b_i + ...$ satisfying $ \delta_{\eta} (b) = c$.
\end{proof}
It follows that many important properties of untwisted equivariant cohomology, such as homotopy invariance and excision, extend to twisted cohomology.
\begin{proposition}
Let $G$ be compact connected with maximal torus $T$ and Weyl group $W = N(T)/T$. For any $G$-manifold $M$ and twisting $\eta \in \Omega_G^3(M)$, we have a natural isomorphism $$ H_G(M;\eta) \cong H_T(M;\eta')^W$$ where $\eta' \in \Omega_T(M)^W$ is the image of $\eta$ under the map $\Omega_G(M) \rightarrow \Omega_T(M)$ induced by restricting the action.
\end{proposition}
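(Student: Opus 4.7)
The plan is to reduce the twisted statement to the classical untwisted result via Proposition \ref{doubledip}, and then exchange cohomology with $W$-invariants using the finiteness of $W$.

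First, I would exhibit the natural restriction map
\[
\phi: \Omega_G(M) \longrightarrow \Omega_T(M)^W
\]
induced by restricting polynomials from $S\lie{g}^*$ to $S\lie{t}^*$ (and averaging $T$-invariant forms), acting as the identity on the $\Omega(M)$ factor. Its image lands in the $W$-invariants because any $G$-invariant element, restricted to $T$, is automatically $N(T)$-invariant. This $\phi$ is a chain map and is $\Omega_G(M)$-linear when we regard $\Omega_T(M)^W$ as an $\Omega_G(M)$-module via $\phi$ itself, so $\phi$ is a morphism of $(\Omega_G(M),d_G)$-modules. Moreover, in the untwisted setting the classical result that $H_G(M) \cong H_T(M)^W$ (see e.g.\ Guillemin-Sternberg \cite{GS99}) upgrades to the statement that $\phi$ is a quasi-isomorphism of cochain complexes; one proof uses the spectral sequence of the fibration $G/T \hookrightarrow M \times_T EG \to M \times_G EG$ together with the fact that $H^*(G/T)$ is concentrated in even degrees.

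Applying Proposition \ref{doubledip} to the $\Omega_G(M)$-module quasi-isomorphism $\phi$, with twisting $\eta \in \Omega_G^3(M)$ (whose image in $\Omega_T(M)^W$ is $\eta'$), we deduce an isomorphism
\[
H_G(M;\eta) \;\cong\; H\bigl(\Omega_T(M)^W;\, d_T + \eta'\wedge\bigr).
\]

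For the last step, since $W$ is a finite group acting on an $\R$-vector space complex, averaging over $W$ yields an exact projection $\Omega_T(M) \to \Omega_T(M)^W$ that commutes with the twisted differential $d_{T,\eta'}$ because $\eta'$ is itself $W$-invariant. Consequently $\Omega_T(M)$ decomposes as a direct sum of chain complexes $\Omega_T(M)^W \oplus \Omega_T(M)^{\perp}$, and cohomology commutes with $W$-invariants, giving
\[
H\bigl(\Omega_T(M)^W;\eta'\bigr) \;\cong\; H(\Omega_T(M);\eta')^W \;=\; H_T(M;\eta')^W,
\]
which combined with the previous isomorphism yields the claim. The main technical point is the untwisted quasi-isomorphism property of $\phi$, which is classical; checking compatibility with the completion $\hat\Omega_G$ used to define twisted cohomology in \eqref{homprod} is harmless, since taking direct products and $W$-invariants of a finite group both commute with cohomology.
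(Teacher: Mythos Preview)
Your proof is correct and follows essentially the same route as the paper: both use the classical untwisted quasi-isomorphism $\Omega_G(M)\to\Omega_T(M)^W$, apply Proposition~\ref{doubledip} to pass to the twisted case, and then identify $H(\Omega_T(M)^W;\eta')$ with $H_T(M;\eta')^W$. Your version simply spells out the last identification via the averaging projector for the finite group $W$, which the paper leaves implicit.
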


\begin{proof}
The map $ \Omega_G(M) \rightarrow \Omega_T(M)$ restricts to a quasiisomorphism $ \Omega_G(M) \rightarrow  \Omega_T(M)^W$ which is also a $\Omega_G(M)$-module homomorphism in the obvious way. Thus by Proposition \ref{doubledip}, $$ H_G(M;\eta) \cong H_T( \Omega_T(M)^W ; \eta) \cong H_T(M; \eta')^W$$
\end{proof}
This result helps justify our later focus on torus actions.

Recall that in untwisted equivariant cohomology, we have the isomorphism $$\phi: H_G(M) \cong H(M/G),$$ provided that
the action of $G$ on $M$ is free. We have the following generalization.

\begin{proposition}(\cite{Lin07}, A.4.) \label{freeaction}
Let $M$ be a smooth $G$-manifold upon which $G$ acts freely and suppose $\dim H(M) < \infty$.  For $d_G$-closed $\eta \in \Omega_G^3(M)$ we have isomorphisms $$ H_G(M;\eta) \cong H(M/G;\bar{\eta})$$ where $\bar{\eta} \in \Omega^3(M/G)$ satisfies $\phi([\eta]) = [\bar{\eta}] \in H(M/G)$.
\end{proposition}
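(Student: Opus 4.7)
The plan is to deduce the twisted statement from the untwisted one by lifting the isomorphism $\phi$ of Proposition A.4 in \cite{Lin07} to a chain-level $\Omega_G(M)$-module quasi-isomorphism and then invoking Proposition \ref{doubledip}.

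Since $G$ acts freely, $\pi : M \to M/G$ is a principal $G$-bundle, so it admits a $G$-invariant connection 1-form $\theta \in (\Omega^1(M) \otimes \lie{g})^G$ with curvature $\Omega \in (\Omega^2(M) \otimes \lie{g})^G$. The associated Chern--Weil / Cartan construction produces a map
$$\kappa : \Omega_G(M) \longrightarrow \Omega(M/G),$$
defined by first substituting the curvature $\Omega$ into the $S\lie{g}^*$-factor (giving an algebra map $\Omega_G(M) \to \Omega(M)^G$) and then composing with the horizontal projection onto the basic subcomplex $\Omega_{\mathrm{basic}}(M) \cong \Omega(M/G)$. Standard Chern--Weil arguments show that $\kappa$ is a morphism of differential graded algebras: multiplicativity follows because both curvature substitution and horizontal projection are algebra maps, and the chain property follows from the Bianchi identity. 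By Proposition A.4 of \cite{Lin07}, $\kappa$ is a quasi-isomorphism, and the induced map on cohomology is precisely $\phi$.

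Next, use the algebra map $\kappa$ to equip $\Omega(M/G)$ with the structure of an $(\Omega_G(M), d_G)$-module by the rule $\alpha \cdot \omega := \kappa(\alpha) \wedge \omega$. Multiplicativity of $\kappa$ then upgrades it to a homomorphism of $\Omega_G(M)$-modules, and by the previous step it is a quasi-isomorphism of such. Applying Proposition \ref{doubledip} to $\kappa$ and the given $d_G$-closed 3-form $\eta \in \Omega_G^3(M)$ yields an isomorphism
$$H_G(M;\eta) \cong H\bigl(\Omega(M/G),\, d + \kappa(\eta)\wedge\bigr) = H(M/G;\bar\eta),$$
where $\bar\eta := \kappa(\eta)$. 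Since $\kappa$ induces $\phi$ on untwisted cohomology, we automatically have $\phi([\eta]) = [\bar\eta]$, as required.

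The main obstacle is the technical verification that the Cartan construction $\kappa$ is a \emph{genuine} algebra chain map, rather than merely a chain map inducing an algebra map on cohomology, so that it qualifies as an $\Omega_G(M)$-module quasi-isomorphism for the purposes of Proposition \ref{doubledip}; this is classical Chern--Weil theory and poses no essentially new difficulty. The finiteness assumption $\dim H(M) < \infty$ enters only through the appeal to the untwisted Proposition A.4 of \cite{Lin07}.
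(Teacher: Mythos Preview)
Your argument is correct and in fact gives a clean, self-contained proof where the paper simply cites \cite{Lin07}. The logic is exactly right: the Cartan map $\kappa:\Omega_G(M)\to\Omega(M/G)$ associated to a connection on the principal bundle $M\to M/G$ is a morphism of differential graded algebras and a quasi-isomorphism, so viewing $\Omega(M/G)$ as an $\Omega_G(M)$-module via $\kappa$ and applying Proposition~\ref{doubledip} yields the twisted isomorphism with $\bar\eta=\kappa(\eta)$.

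Two small remarks. First, your appeal to ``Proposition A.4 of \cite{Lin07}'' for the fact that $\kappa$ is a quasi-isomorphism is circular: that proposition \emph{is} the twisted statement you are proving. What you actually need is the classical untwisted result that the Cartan map is a DGA quasi-isomorphism for a free action, which is in \cite{GS99} (and is where one verifies, via the Mathai--Quillen isomorphism and the Weil homomorphism, that $\kappa$ is genuinely multiplicative and a chain map, not merely so on cohomology). Second, your comment that the hypothesis $\dim H(M)<\infty$ enters through the untwisted isomorphism is not quite right: the untwisted Cartan quasi-isomorphism holds for any free action, and Proposition~\ref{doubledip} requires no finiteness. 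In your argument the hypothesis is in fact unused; its role in the paper is likely only to ensure good behaviour of the completed complexes, which here is automatic since $\Omega(M/G)$ is already bounded in degree.
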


Given a short exact sequence $0 \rightarrow C^* \rightarrow D^* \rightarrow E^*\rightarrow 0$ of $\Omega_G(M)$-modules, twisting by $\eta$ gives rise to a six term exact sequence in the twisted cohomology.

\begin{example}
Recall the notation of Example \ref{pairs}. The pair $i: A \hookrightarrow N$ gives rise to a short exact sequence of $\Omega_G(M)$-modules, $ 0 \rightarrow \Omega_G(A) \rightarrow Cyl( i^*) \rightarrow \Omega_G(N,A) \rightarrow 0$, where the algebraic mapping cylinder, $Cyl(i^*)$, is naturally quasiisomorphic to $\Omega_G(N)$. We obtain a six term exact sequence:
\begin{equation*}\begin{CD}
 H^0_G(N,A;\eta) @>>> & H^0_G(N;\eta)@>>> H^0_G(A;\eta) \\
 @AAA      &                @.              @VVV \\
H^1_G(A;\eta) @<<<  & H^1_G(N;\eta) @<<<   H^1_G(N,A;\eta)
\end{CD}\end{equation*}
\end{example}

The next lemma shows that up to (noncanonical) isomorphism, the $\eta$-twisted equivariant cohomology depends only on the cohomology class $[\eta] \in H_G^3(M)$.

\begin{lemma}\label{carumba}
Let $b\in \Omega_G^2(M)$ be an equivariant 2-form and let $exp(b) = \sum_{i=0}^{\infty} b^n/ n!$. Then for any $(\Omega_G(M),d_G)$-module $(C^*,\delta)$, wedging by $exp(b)$ determines an isomorphism of chain complexes, $$ exp(b)\wedge  (\,\cdot\,) : (\hat{C}, \delta_{(\eta+d_Gb)}) \rightarrow (\hat{C}, \delta_{\eta})$$ which in particular determines an isomorphism $H_G(C^* ; \eta + d_Gb) \cong H_G(C^*; \eta )$.
\end{lemma}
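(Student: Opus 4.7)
The plan is to verify the stated map is well-defined and then to check the intertwining identity by a direct computation, with $\exp(-b)$ providing the inverse.

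First I would check that wedging by $\exp(b) = \sum_{n \ge 0} b^n/n!$ is well-defined as a map $\hat{C} \to \hat{C}$. Since $b \in \Omega_G^2(M)$, the term $b^n/n!$ lies in $\Omega_G^{2n}(M)$. For any $x = \sum_i x_i \in \hat{C}$ with $x_i \in C^i$, the homogeneous component of $\exp(b)\wedge x$ in degree $k$ is $\sum_{2n + i = k}(b^n/n!)\wedge x_i$, a finite sum. So the map makes sense, and by the same token $\exp(-b)\wedge (\,\cdot\,)$ is its two-sided inverse once we verify $\exp(b)\wedge \exp(-b) = 1$, which follows from the binomial identity since $b$ has even degree and therefore commutes with itself in the graded sense.

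Next I would compute $d_G(\exp(b))$. Because $b$ is of even degree, the Leibniz rule for $d_G$ applied inductively to $b^n$ gives $d_G(b^n) = n\, d_G(b) \wedge b^{n-1}$ (no sign issues, and $d_G(b)$ commutes with $b$ since one factor is odd and one is even — more precisely, since $d_G(b)$ is of degree $3$, it anticommutes with itself but here we are only multiplying by powers of $b$, where the sign is $(-1)^{3\cdot 2} = 1$). Summing yields $d_G(\exp(b)) = d_G(b) \wedge \exp(b)$. The key intertwining identity then goes as follows: for $x \in \hat{C}$ of pure degree (the general case follows by linearity and continuity),
\begin{align*}
\delta_\eta(\exp(b) \wedge x) &= d_G(\exp(b))\wedge x + \exp(b)\wedge \delta(x) + \eta \wedge \exp(b)\wedge x \\
&= d_G(b)\wedge \exp(b)\wedge x + \exp(b)\wedge \delta(x) + \exp(b)\wedge \eta \wedge x \\
&= \exp(b) \wedge \bigl(\delta(x) + (\eta + d_G b)\wedge x\bigr) \\
&= \exp(b) \wedge \delta_{\eta + d_G b}(x),
\end{align*}
using the module identity from the definition of a $(\Omega_G(M), d_G)$-module together with the fact that $\exp(b)$ is even, so that it commutes with $\eta$ and no sign appears in the Leibniz term.

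This exhibits $\exp(b)\wedge(\,\cdot\,)$ as a chain map $(\hat{C}, \delta_{\eta + d_G b}) \to (\hat{C}, \delta_\eta)$, and replacing $b$ with $-b$ (together with $\eta$ by $\eta + d_G b$) gives the inverse chain map. Passing to cohomology produces the desired isomorphism $H_G(C^*;\eta + d_G b) \cong H_G(C^*;\eta)$. I do not anticipate a real obstacle beyond bookkeeping: the only subtlety is keeping track of the parity of $b$ so that all sign conventions in the graded Leibniz rule collapse to the naive identity, and confirming convergence in $\hat{C}$, which is automatic because $b$ strictly raises degree.
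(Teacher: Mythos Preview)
Your proof is correct and follows essentially the same approach as the paper: verify that $\exp(-b)$ is a two-sided inverse, then check the chain map identity via the computation $\delta_\eta(\exp(b)\wedge x) = d_G(\exp(b))\wedge x + \exp(b)\wedge \delta_\eta(x) = \exp(b)\wedge \delta_{\eta+d_Gb}(x)$. You supply more detail than the paper on well-definedness in $\hat{C}$ and on the sign bookkeeping, but the argument is the same.
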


\begin{proof}
The map $exp(b) \wedge (\,\cdot\,)  : \hat{C} \rightarrow \hat{C}$ is certainly even and linear.  The equations $ exp(b) \wedge exp(-b) = exp(-b) \wedge exp(b) = id_{\hat{C}}$ imply that $exp(b) \wedge (\,\cdot\,)$ is an isomorphism of vector spaces. Finally, for $\alpha \in \hat{C}$ we have \[\begin{split} \delta_{\eta}(exp(b)\wedge \alpha) = d_G( exp(b)) \wedge \alpha + exp(b) \wedge \delta_{\eta} \alpha \\= exp(b) \wedge d_Gb \wedge \alpha + exp(b) \wedge \delta_{\eta} \alpha = exp(b) \delta_{(\eta + d_Gb)}\alpha\end{split}\] so $exp(b) \wedge (\,\cdot\,)$ respects differentials.
\end{proof}

We may now state and prove the Thom isomorphism, which is due to Hu-Uribe \cite{HuUribe06}.

\begin{proposition}\label{tocome}
Let $\pi: E \rightarrow N$ be an orientable real vector bundle of rank $r$, let $i: N \rightarrow E$ denote inclusion as the zero section. Let $G$ be a compact torus acting on $E$ by bundle automorphisms, inducing an action on $N$. Let $\eta \in \Omega_G^3(E)$ be a $d_G$-closed form and let $\tau \in \Omega_G^r(E,E-N)$ be a $d_G$-closed form representing the usual equivariant Thom class (c.f. \cite{GS99}). Then the composition:

\begin{equation*}
H_G(N; \eta) \xrightarrow{\pi^*} H_G(E;\pi^*i^*\eta) \xrightarrow{\wedge \tau} H_G(E, E - N; \pi^* i^* \eta) \xrightarrow{ exp(b) \wedge } H_G(E, E-N; \eta)
\end{equation*}
is an isomorphism of degree $(r~ mod~2)$, where $b \in \Omega^2_G(E)$ satisfies $d_G(b) = \pi^* i^* (\eta) - \eta$.
\end{proposition}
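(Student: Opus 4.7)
The plan is to prove that each of the three arrows in the composition is an isomorphism, with $\Z_2$-graded degrees $0$, $r$, and $0$ respectively, so the composition is an isomorphism of total degree $r \bmod 2$. The three ingredients needed are already available: the standard untwisted equivariant Thom isomorphism (cf. \cite{GS99}), Proposition \ref{doubledip} (invariance of twisted cohomology under module quasi-isomorphisms), and Lemma \ref{carumba} (the $\exp(b)$ change-of-twisting trick).

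First, I would produce the primitive $b$. Because $G$ acts on $E$ by bundle automorphisms, the fiberwise scaling $H_t(v) = tv$ is a $G$-equivariant homotopy between $i \circ \pi$ at $t=0$ and $\id_E$ at $t=1$. The usual Cartan-model chain-homotopy formula applied to $H$ yields a $G$-invariant operator $K: \Omega_G^*(E) \to \Omega_G^{*-1}(E)$ satisfying $\pi^* i^* - \id = d_G K + K d_G$; evaluating on the closed form $\eta$ gives $\pi^* i^* \eta - \eta = d_G b$ with $b := K\eta \in \Omega_G^2(E)$, which is the primitive required by the statement.

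Next I would handle the three arrows. For $\pi^*$, note that $i^*: \Omega_G(E) \to \Omega_G(N)$ is a homomorphism of $\Omega_G(E)$-modules (where $\Omega_G(N)$ carries the module structure induced by $i^*$) and is a quasi-isomorphism, since $N$ is a $G$-equivariant deformation retract of $E$. By Proposition \ref{doubledip}, $i^*$ induces an isomorphism on $\eta'$-twisted cohomology for every closed $\eta' \in \Omega_G^3(E)$; applying this with $\eta' = \pi^* i^* \eta$ and using the identity $i^* \pi^* = \id_{\Omega_G(N)}$ forces $\pi^*$ to be the inverse isomorphism. For $\wedge \tau$, the map $m_\tau: \alpha \mapsto \alpha \wedge \tau$ is a degree-$r$ $\Omega_G(E)$-module chain map $\Omega_G(E) \to \Omega_G(E, E-N)$ (chain map because $\tau$ is closed, module map because $\wedge$ is associative) and is a quasi-isomorphism by the classical untwisted equivariant Thom theorem; a second application of Proposition \ref{doubledip} with $\eta' = \pi^* i^* \eta$ delivers the required degree-$r$ isomorphism. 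Finally, for $\exp(b) \wedge$, the identity $\pi^* i^* \eta = \eta + d_G b$ combined with Lemma \ref{carumba} applied to the $\Omega_G(E)$-module $\Omega_G(E, E-N)$ produces the last, degree-$0$, isomorphism.

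The main obstacle is the middle step, where one has to confirm both that $m_\tau$ really is an $\Omega_G(E)$-module chain map into the mapping cone $\Omega_G(E, E-N)$ defined in Example \ref{pairs}, and that it realises the classical equivariant Thom isomorphism on the untwisted level. The first is routine once one unpacks the module structure $\alpha \wedge (n,a) = (\alpha \wedge n, \alpha \wedge a)$ and invokes associativity; the second follows because $[\tau]$ is by hypothesis the usual equivariant Thom class, so $m_\tau$ may be compared to any classical representative by composing with the isomorphism already established in the first step.
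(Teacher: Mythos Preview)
Your proof is correct and follows essentially the same route as the paper's: show each of the three arrows is an isomorphism using, respectively, homotopy invariance (via Proposition \ref{doubledip}), the untwisted equivariant Thom theorem fed through Proposition \ref{doubledip}, and Lemma \ref{carumba}. You supply a bit more detail than the paper does---the explicit chain-homotopy construction of $b$ and the $i^*\pi^*=\id$ argument for the first arrow---but the architecture is identical.
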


\begin{proof}
By homotopy invariance $\pi^*$ is an isomorphism. The map $\wedge \tau : (\Omega_G(E), d_G) \rightarrow (\Omega_G(E, E-N),d_G)$ is a $\Omega_G(E)$-module morphism and induces a degree $r$ isomorphism $H_G^*(E) \cong H_G^{*+r}(E,E-N)$, so by Proposition \ref{doubledip}
\[ H_G(E;\eta) \xrightarrow{\wedge \tau} H_G(E, E - N; \pi^* i^* \eta)\] is also an isomorphism.
Finally, $\exp(b) \wedge $ is an isomorphism by Lemma \ref{carumba}.
\end{proof}

The equivariant Euler class plays the same role for twisted equivariant cohomology as it does for untwisted equivariant cohomology.

\begin{lemma}\label{euler}
Let $\pi: E \rightarrow N$ satisfy the hypotheses and notation of Proposition \ref{tocome}, and let $Eul_G(E) \in H_G^r(N)$ denote the equivariant Euler class of $E$. Then the following diagram is commutative:

\begin{equation}\begin{CD}
H_G(E, E-N;\eta) @>>j> H_G(E;\eta)\\
   @AA{\phi}A                   @VV{i^*}V \\
H_G(N; \eta)  @>>\cup Eul_G(E)> H_G(N;\eta)
\end{CD}\end{equation}
where $\phi$ is the Thom isomorphism of Proposition \ref{tocome}, and $j$ is induced by the inclusion map of forms.
\end{lemma}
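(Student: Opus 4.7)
My plan is a direct computation at the level of chains, after making a careful choice of the primitive $b \in \Omega^2_G(E)$ that enters the Thom isomorphism of Proposition~\ref{tocome}. Given $x \in H_G(N;\eta)$ with representative $\tilde{x} \in \Omega_G(N)$, the composition $j \circ \phi$ sends $\tilde{x}$ to a form representing $\exp(b) \wedge \pi^*\tilde{x} \wedge \tau$, regarded as a class on $E$ via the inclusion-of-forms map $j$. Pulling back along $i \colon N \hookrightarrow E$, using $\pi \circ i = \id_N$ and the naturality of $i^*$ with respect to wedge product, produces
\[ i^*(\exp(b)) \wedge \tilde{x} \wedge i^*\tau \;=\; \exp(i^*b) \wedge \tilde{x} \wedge \mathrm{Eul}_G(E), \]
where I use the standard defining identity $\mathrm{Eul}_G(E) = i^*\tau$. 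Thus the lemma reduces to arranging that $i^*b = 0$.

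To construct such a $b$, I would use the fibrewise linear contraction $h_s \colon E \to E$, $h_s(v) = sv$, which is $G$-equivariant since $G$ acts on $E$ by bundle automorphisms, and which satisfies $h_1 = \id_E$ and $h_0 = i \circ \pi$. The standard homotopy formula $K(\omega) = \int_0^1 h_s^*(\iota_{V_s} \omega)\,ds$, with $V_s$ the radial (Euler) vector field on $E$, produces a $G$-equivariant operator $K \colon \Omega_G^\bullet(E) \to \Omega_G^{\bullet - 1}(E)$ obeying $d_G K + K d_G = \id - \pi^* i^*$. Applying $K$ to the $d_G$-closed form $\omega = \pi^* i^* \eta - \eta$ and using $i^*\omega = 0$, I obtain a primitive $b := K(\omega)$ satisfying the equation $d_G b = \omega$ required in Proposition~\ref{tocome}.

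The additional property $i^* b = 0$ follows because $h_s \circ i = i$ for every $s$ (the zero section is pointwise fixed by the contraction) and $V_s$ vanishes identically along $i(N)$. Together these make the integrand $i^*(h_s^*(\iota_{V_s}\omega))$ vanish pointwise on $N$, so $i^* b = 0$ after integration. Substituting back, $\exp(i^*b) = 1$ and the right-hand side of the displayed equation above reduces to $\tilde{x} \wedge \mathrm{Eul}_G(E)$, which represents $x \cup \mathrm{Eul}_G(E)$ in $H_G(N;\eta)$.

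The main obstacle, and the reason this care with $b$ is necessary, is that the Thom isomorphism of Proposition~\ref{tocome} is genuinely sensitive to the choice of primitive: two primitives differing by a $d_G$-closed $2$-form $c$ yield Thom isomorphisms that differ by the automorphism $\exp(c) \wedge (\,\cdot\,)$ of $H_G(E, E-N; \eta)$ (compare Lemma~\ref{carumba}), and after applying $i^* \circ j$ this leaves a residual factor of $\exp(i^* c)$ that need not act as the identity on $H_G(N;\eta)$. The canonical choice of $b$ produced by the radial contraction removes this ambiguity and makes the diagram commute on the nose at the chain level.
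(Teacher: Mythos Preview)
Your proof is correct and follows essentially the same route as the paper: both compute $i^*\circ j\circ\phi$ at the chain level, reduce to $\exp(i^*b)\wedge\tilde{x}\wedge i^*\tau$, and then arrange $i^*b=0$. The only difference is in this last step: where you build $b$ via the radial homotopy operator $K$, the paper simply observes that any primitive $b$ may be replaced by $b-\pi^*(i^*b)$, which still satisfies $d_G(b-\pi^*(i^*b))=\pi^*i^*\eta-\eta$ (since $i^*\pi^*=\id$ forces $d_G\pi^*(i^*b)=0$) and now has $i^*(b-\pi^*(i^*b))=0$ automatically. Your construction is more explicit, theirs is shorter; both yield the same conclusion.
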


\begin{proof}
The map $i^* \circ j \circ \phi$ is induced by a map of forms $h: \Omega_G(N) \rightarrow \Omega_G(N)$ defined by $h(\alpha) = i^*(exp(b)\wedge \pi^*( \alpha) \wedge \tau )$ where $\tau$ is a form representing the equivariant Thom class and $ b \in \Omega_G^2(N)$ satisfies $d_G(b) = \eta - \pi^* (i^* (\eta))$. We may choose $b$ so that $i^*(b) =0$, because if it doesn't we can replace it by $ b - \pi^*(i^*(b))$. Thus $h(\alpha) =  i^*(exp(b) \wedge \pi^*( \alpha) \wedge \tau)= i^*(exp(b)) \wedge \alpha \wedge i^*(\tau)  = \alpha \wedge i^*(\tau)$.  Because $i^*(\tau)$ represents the equivariant Euler class, this completes the proof.
\end{proof}

Hu and Uribe go on to prove the following twisted version of the localization theorem.

\begin{theorem}\label{local}(\cite{HuUribe06})
Let $T$ be a compact torus acting on a smooth, closed manifold $M$ and let $i: M^T \hookrightarrow M$ denote the inclusion of the fixed point set. Then for any $d_T$-closed 3-form $\eta \in \Omega^3_T(M)$, the kernel and cokernel of the induced map $i^*: H_T(M;\eta) \hookrightarrow H_T(M^T;\eta)$ are $\hat{S}\lie{t}^*$-torsion.
\end{theorem}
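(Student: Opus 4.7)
My plan is to follow the classical Atiyah--Bott localization strategy, adapted to the twisted setting using the tools developed in this section. Since $M$ is compact with only finitely many orbit types, the slice theorem furnishes a finite $T$-invariant open cover $\{U_0, U_1, \dots, U_k\}$ of $M$ in which $U_0$ is an equivariant tubular neighborhood of $M^T$ and each $U_j$ for $j \geq 1$ is $T$-equivariantly diffeomorphic to $T \times_{H_j} D_j$ for some proper closed subgroup $H_j \subsetneq T$ and some $H_j$-invariant open disk $D_j$. Homotopy invariance (Proposition \ref{doubledip}) identifies $H_T(U_0;\eta)$ with $H_T(M^T;\eta)$, so it suffices to show that the kernel and cokernel of the restriction $H_T(M;\eta) \to H_T(U_0;\eta)$ are $\hat{S}\t^*$-torsion (noting that $\hat{S}\t^*$ is an integral domain, so ``torsion'' is unambiguous).

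The core computation is that for any $T$-invariant open subset $N \subseteq T \times_H D$ with $H \subsetneq T$ proper and closed, the twisted equivariant cohomology $H_T(N;\eta|_N)$ is $\hat{S}\t^*$-torsion. Any such $N$ is of the form $T \times_H D'$ for some $H$-invariant open $D' \subseteq D$. Regarding $T \times D'$ as a $(T \times H)$-space with $T$ acting on the first factor and $H$ acting diagonally, two applications of Proposition \ref{freeaction}---one for the free $T$-action, one for the free $H$-action---yield the twisted induction isomorphism $H_T(T \times_H D';\eta) \cong H_H(D';\eta|_{D'})$ with $\hat{S}\t^*$-module structures compatible via the restriction homomorphism $\hat{S}\t^* \to \hat{S}\mathfrak{h}^*$. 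Since $H$ is a proper closed subgroup of the torus $T$, we have $\mathfrak{h} \subsetneq \t$, so any nonzero $\xi \in \t^*$ vanishing on $\mathfrak{h}$ maps to zero in $\hat{S}\mathfrak{h}^*$ and hence annihilates $H_H(D';\eta|_{D'})$, witnessing the torsion property.

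Finally, I would assemble the result via iterated Mayer--Vietoris. The twisted six-term Mayer--Vietoris sequence arises from the short exact sequence of $(\Omega_T(M),d_T)$-modules
$$0 \to \Omega_T(A \cup B) \to \Omega_T(A) \oplus \Omega_T(B) \to \Omega_T(A \cap B) \to 0$$
obtained using a $T$-invariant partition of unity subordinate to $\{A,B\}$. Every iterated intersection $U_{j_1} \cap \cdots \cap U_{j_r}$ with all $j_i \geq 1$, and every intersection of such a set with $U_0$, is a $T$-invariant open subset of $T \times_{H_{j_1}} D_{j_1}$ (with $H_{j_1} \subsetneq T$ still proper), so by the previous paragraph its twisted equivariant cohomology is $\hat{S}\t^*$-torsion. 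Since torsion modules over the domain $\hat{S}\t^*$ are closed under kernels, cokernels, and extensions, induction on the size of the subcover yields that $H_T(W;\eta)$ and $H_T(U_0 \cap W;\eta)$ are $\hat{S}\t^*$-torsion, where $W = U_1 \cup \cdots \cup U_k$. A final Mayer--Vietoris for $M = U_0 \cup W$ then delivers the torsion of the kernel and cokernel of $H_T(M;\eta) \to H_T(U_0;\eta) \cong H_T(M^T;\eta)$.

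The main obstacle I anticipate is establishing the twisted induction isomorphism $H_T(T \times_H D';\eta) \cong H_H(D';\eta|_{D'})$ and carefully verifying its compatibility with the $\hat{S}\t^*$-module structure via restriction; once these are in hand, the remainder is routine bookkeeping essentially identical to the classical proof of localization.
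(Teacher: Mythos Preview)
The paper does not provide its own proof of this theorem; it is stated with attribution to Hu--Uribe and left without argument. Your proposal follows the classical Atiyah--Bott localization strategy, and the overall architecture---cover $M$ by a tubular neighborhood $U_0$ of $M^T$ together with tubes about non-fixed orbits, show the latter contribute only torsion, then assemble via Mayer--Vietoris---is correct and standard.

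Your acknowledged obstacle is a genuine gap as written. Proposition~\ref{freeaction} gives $H_G(M;\eta) \cong H(M/G;\bar\eta)$ only when the \emph{entire} group $G$ acts freely; your ``two applications'' instead require the reduction-in-stages statement $H_{G \times K}(M;\tilde\eta) \cong H_G(M/K;\bar\eta)$ for a freely acting normal subgroup $K$, which that proposition does not provide (though the statement is true and provable by the same Cartan-model techniques). A secondary issue: the hypothesis $\dim H(M) < \infty$ in Proposition~\ref{freeaction} need not hold for $T \times D'$ when $D' \subseteq D$ is an arbitrary $H$-invariant open arising from an iterated intersection of tubes.

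Both issues disappear if you replace your cover by an \emph{equivariantly good} cover in the sense of Appendix~\ref{apptwis}, so that every nonempty intersection retracts $T$-equivariantly to a single orbit $T/H$. Then Lemma~\ref{fouture} supplies the local computation directly: since $H_T^3(T/H) \cong H^3(BH) = 0$, the restriction of $\eta$ to each such intersection is $d_T$-exact, and Lemma~\ref{carumba} reduces you to the untwisted $H_T(T/H) \cong H(BH)$. For intersections disjoint from $M^T$ the relevant $H$ is a proper closed subgroup of the connected torus $T$, hence $\mathfrak h \subsetneq \t$; the $\hat S\t^*$-structure on $H(BH)$ factors through $\hat S\mathfrak h^*$ and is therefore torsion. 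Your Mayer--Vietoris assembly then goes through unchanged.
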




\subsection{Spectral Sequences}\label{Spectral Sequences}

We now consider two spectral sequences associated filtrations of the complex $(\hat{\Omega}_G(M), d_{\eta})$, both of which converge strongly to $H(M;\eta)$ (see Appendix \ref{apptwis} for an explanation of convergence properties).

First consider the filtration of $(\hat{\Omega}_G(M), d_{G,\eta})$

\begin{equation}\label{filtone}
F^p  = F^p \hat{\Omega}_G(M):= \prod_{k\geq p} \Omega_G^k(M)
\end{equation}
which satisfies $d_{G,\eta}(F^p) \subset F^{p+1}$. The resulting spectral sequence $(E_r^*, d_r)$ satisfies $E_1^p \cong E_2^p \cong H^p_G(M)$ the untwisted cohomology, while $d_2$ is the wedging map $ \eta \wedge (.) : H^*_G(M) \rightarrow H^*_G(M)$. Thus by Proposition \ref{carumba}, this spectral sequence collapses at $E_1$ if and only if $\eta$ is cohomologous to zero. In particular, if $\dim H(M) < \infty$ then in the nonequivariant case:

\begin{equation}\label{egadyikes}
\dim H(M ;\eta) \leq \dim H(M)
\end{equation}
with equality if and only if $\eta$ is $d$-exact.

Now consider a different filtration:

\begin{equation}\label{other}
L^p = L^p\hat{\Omega}_G(M) := \prod_{k \geq p} (\Omega(M) \otimes S^k\lie{g}^*)^G
\end{equation}
This gives rise to a spectral sequence $(E_r^*, d_r)$ of $(\hat{S}\lie{g}^*)^G$-modules satisfying $E_1^p \cong H(M;\eta(0)) \otimes (S^p \lie{g}^*)^G$, where $\eta(0)$ be the ordinary 3-form obtained by evaluating $\eta$ at $0\in \lie{g}$.

\begin{definition}
We say that a $G$-manifold $M$ is \textbf{$\eta$-equivariantly formal} if the spectral sequence defined above collapses at $E_1$. In this case $H_G(M;\eta)$ is noncanonically isomorphic to $H(M;\eta(0)) \otimes (\hat{S} \lie{g}^*)^G$ as a module over $(\hat{S} \lie{g}^*)^G$.
\end{definition}

Notice that the quotient complex $ \hat{\Omega}_G(M) / L^1 \hat{\Omega}_G(M)$ is canonically isomorphic to $\Omega(M)$. This gives rise to a natural map $H_G(M;\eta) \rightarrow H(M;\eta(0))$ for all twistings $\eta$. We have a version of the Leray-Hirsch theorem in this context.

\begin{proposition}\label{lerhirsh}
The $G$-manifold $M$ is $\eta$-equivariantly formal if and only if the natural map $ H_G(M ;\eta) \rightarrow H(M;\eta(0))$ is surjective.
\end{proposition}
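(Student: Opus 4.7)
The plan is to analyze the spectral sequence associated with the filtration $L^p$ of \eqref{other}, whose $E_1^p \cong H(M;\eta(0)) \otimes (S^p\lie{g}^*)^G$ converges strongly to $H_G(M;\eta)$. The natural map $H_G(M;\eta) \to H(M;\eta(0))$ is induced by the quotient chain map $\hat{\Omega}_G(M) \to \hat{\Omega}_G(M)/L^1 \cong \Omega(M)$, and therefore factors through the associated graded as
\[
H_G(M;\eta) \twoheadrightarrow F^0H_G(M;\eta)/F^1H_G(M;\eta) = E_\infty^0 \hookrightarrow E_1^0 = H(M;\eta(0)),
\]
where the second arrow is the composition of inclusions $E_\infty^0 \subseteq \cdots \subseteq E_2^0 \subseteq E_1^0$. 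The forward direction is then immediate: if the spectral sequence collapses at $E_1$, then $E_\infty^0 = E_1^0$ and the displayed composition is surjective.

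For the converse, suppose the natural map is surjective; then $E_\infty^0 = E_1^0$, which forces each inclusion in the descending chain above to be an equality, so $d_r$ vanishes on $E_r^0 = E_1^0$ for every $r \geq 1$. To upgrade this to the vanishing of $d_r$ on all of $E_r$, I will use the module structure over $(\hat{S}\lie{g}^*)^G$: constant-coefficient invariant polynomials lie in $\ker d_G$, and multiplying by $p \in (S^q\lie{g}^*)^G$ is $d_{G,\eta}$-linear while raising the filtration level by exactly $q$, so the entire spectral sequence is one of $(\hat{S}\lie{g}^*)^G$-modules and each $d_r$ is $(\hat{S}\lie{g}^*)^G$-linear.

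With this in hand I induct on $r$ to show $E_r = E_1$ with $d_r = 0$. Once $E_s = E_1$ for $s \leq r$, we have $E_r^q = E_1^q = H(M;\eta(0)) \otimes (S^q\lie{g}^*)^G$, which is spanned by elementary tensors $h \otimes p$ realised as products $h \cdot p$ with $h \in E_r^0$ and $p \in (S^q\lie{g}^*)^G$; linearity plus the vanishing of $d_r$ on $E_r^0$ then give $d_r(h \cdot p) = d_r(h) \cdot p = 0$, so $d_r \equiv 0$ on all of $E_r$ and $E_{r+1} = E_r$. This closes the induction and yields the collapse at $E_1$. The only real subtlety is the bootstrapping step via the $(\hat{S}\lie{g}^*)^G$-action on the spectral sequence pages; once one checks that the filtration is compatible with the module structure and that $d_{G,\eta}$ is $(\hat{S}\lie{g}^*)^G$-linear, everything else is formal.
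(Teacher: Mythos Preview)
Your proof is correct and rests on the same key idea as the paper's: the $(\hat{S}\lie{g}^*)^G$-module structure lets one bootstrap from filtration level $0$ to all levels. The paper packages this by showing collapse at $E_1$ is equivalent to surjectivity of all the maps $\pi_p: H(L^p; d_{G,\eta}) \to E_1^p$ and then observing $\pi_p(\phi\sigma)=\pi_0(\phi)\otimes\sigma$, while you work directly with the pages $E_r$ and the linearity of $d_r$; the content is the same.
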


\begin{proof}
The spectral sequence associated to the filtration collapses at page $E_1$ if and only if the injections $L^{p+1} \hookrightarrow L^p$ induce injections in cohomology $H( L^{p+1}\Omega_G(M) ; d_{G,\eta}) \hookrightarrow H( L^p \Omega_G(M) ; d_{G,\eta})$ for all $p$. By the associated six term exact sequence, is true if and only if $ H( L^p ; d_{G,\eta}) \rightarrow H(L^p /L^{p+1} ; d_{G,\eta})$ is surjective. Of course $H( L^p/L^{p+1}; d_{G,\eta}) = E_1^p \cong H(M ;\eta(0)) \otimes (S^p \lie{g}^*)^G$.

Collecting together, we see that $M$ is equivariantly formal if and only if the natural maps $$ \pi_p: H( L^p ; d_{G,\eta}) \rightarrow H(M ;\eta(0)) \otimes (S^p \lie{g}^*)^G$$ are surjective for all $p$. When $p=0$, $\pi_0$ is exactly $H_G(M;\eta) \rightarrow H(M;\eta(0))$ proving one direction of the equivalence. The opposite direction follows by noting that for $\sigma \in (S^p \lie{g}^*)^G$ and $d_{G,\eta}$-closed $\phi \in \Omega_G(M)$, we have $\pi_p( \phi \sigma) = \pi_0(\phi) \otimes \sigma$.
\end{proof}

\begin{proposition}\label{weylinv}
A $G$-manifold $M$ is $\eta$-equivariantly formal if and only if $M$ it is $\eta'$-equivariantly formal as a $T$-manifold under the restricted maximal torus action, where $\eta'$ is the image of $\eta$ under the induced map $\Omega_G^3(M) \rightarrow \Omega_T^3(M)$.
\end{proposition}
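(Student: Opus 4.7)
My plan is to reduce equivariant formality, on both the $G$- and $T$-sides, to the surjectivity criterion of Proposition \ref{lerhirsh}, and then use the $W$-invariance isomorphism $H_G(M;\eta) \cong H_T(M;\eta')^W$ (from the preceding proposition) to compare the two. The first observation I would record is that the restriction map $\Omega_G(M) \to \Omega_T(M)$ only restricts polynomials from $\lie{g}$ to $\lie{t}$, so evaluation at $0$ is compatible: $\eta(0) = \eta'(0) \in \Omega^3(M)$. Consequently the natural augmentation maps fit into a commutative diagram
\[
\begin{CD}
H_G(M;\eta) @>>> H(M;\eta(0)) \\
@V{\cong}VV @| \\
H_T(M;\eta')^W @>>> H(M;\eta'(0))
\end{CD}
\]
in which the bottom map is the restriction to $W$-invariants of the $T$-equivariant augmentation $H_T(M;\eta') \to H(M;\eta'(0))$.

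Second, I would observe that $W$ acts trivially on the target $H(M;\eta'(0))$. Indeed, the $W = N(T)/T$ action on $H(M;\eta'(0))$ is induced from the restriction of the $G$-action on $M$, and since $G$ is connected it acts trivially on cohomology; so the averaging operator $\frac{1}{|W|}\sum_{w \in W} w \cdot (-)$ is the identity on $H(M;\eta'(0))$. Given this, the forward direction is immediate: if $H_T(M;\eta') \to H(M;\eta'(0))$ is surjective, then applying the (exact, over $\R$) functor of $W$-invariants to this $W$-equivariant surjection yields a surjection $H_T(M;\eta')^W \to H(M;\eta'(0))^W = H(M;\eta'(0))$, which by the diagram is exactly the $G$-augmentation, so $M$ is $\eta$-equivariantly formal by Proposition \ref{lerhirsh}.

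For the reverse direction, suppose $M$ is $\eta$-equivariantly formal, so $H_G(M;\eta) \to H(M;\eta(0))$ is surjective. Factoring this map as
\[
H_G(M;\eta) \;\cong\; H_T(M;\eta')^W \;\hookrightarrow\; H_T(M;\eta') \;\longrightarrow\; H(M;\eta'(0)),
\]
the surjectivity of the composition forces surjectivity of the final map, which by Proposition \ref{lerhirsh} means $M$ is $\eta'$-equivariantly formal as a $T$-manifold. The only point that requires a little care is the identification of the bottom row of the diagram with the $W$-invariants of the $T$-augmentation; this follows by naturality of the augmentation map (it is induced by the quotient $\hat{\Omega}_\bullet(M) \to \Omega(M)$, which is the same on the nose for $G$ and $T$) combined with the proof of the Weyl group proposition, which identifies $H_G(M;\eta)$ with $H_T(M;\eta')^W$ via the inclusion $\Omega_G(M) \hookrightarrow \Omega_T(M)^W$. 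I do not expect any serious obstacle; the argument is essentially an exercise in diagram chasing once the key input that $W$ acts trivially on $H(M;\eta(0))$ is in hand.
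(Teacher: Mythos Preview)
Your proposal is correct and follows essentially the same approach as the paper: both directions are deduced from the Leray--Hirsch surjectivity criterion (Proposition \ref{lerhirsh}), using the factorization $H_G(M;\eta)\cong H_T(M;\eta')^W\hookrightarrow H_T(M;\eta')\to H(M;\eta'(0))$ for one implication and the triviality of the $W$-action on $H(M;\eta(0))$ (via connectedness of $G$) together with averaging for the other. Your write-up is in fact a bit more careful than the paper's, making explicit why $\eta(0)=\eta'(0)$ and why taking $W$-invariants over $\R$ preserves surjectivity.
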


\begin{proof}
We use the criterion of Proposition \ref{lerhirsh}
The natural map $\phi: H_G(M ;\eta) \rightarrow H(M ;\eta(0))$ factors through the natural map $\phi'$ via $$ H_G(M;\eta) \rightarrow H_T(M;\eta') \stackrel{\phi'}{\rightarrow} H(M;\eta'(0)).$$ Thus if $\phi$ is surjective, so must $\phi'$.  On the other hand, the map $\phi'$ is $W$ equivariant, where the action of the Weyl group $W$ on $H(M;\eta(0))$ is induced by $N(T)$ action restricted from $G$. Since this action is isotropically trivial, we find that if $\phi'$ is invariant under the $W$ action, so is the restricted map $$ H_G(M;\eta) \cong H_T(M;\eta')^W \rightarrow H(M;\eta(0))$$completing the proof.
\end{proof}

It is worth noting that a $G$-manifold that is equivariantly formal for $\eta =0$ may fail to be formal for $\eta \neq 0$.

\begin{example}
Let $U(1)$ act trivially on $S^1$. Then $H_{U(1)}(S^1) \cong H(S^1) \otimes \R [x]$, where $0 \neq x \in \lie{u(1)}^*$. Choose a twisting $ \eta$ satisfying $0 \neq [\eta] \in H^1(S^1) \otimes x$. Then $H(S^1 ; \eta(0)) \cong H(S^1)$ because $[\eta(0)] = 0$, while $H_{U(1)}(M ;\eta) = 0$. The second assertion here follows from the $\{ L^p\}$ spectral sequence where $ E_1 \cong H(S^1) \otimes \R [x]$ with differential $d_1$ defined by wedging by $[\eta]$ so that $H(E_1, d_1) = E_2 = 0$.
\end{example}

\section{Morse theory}\label{morese theory}

Throughout this section, unless otherwise stated, $T$ is a compact torus with Lie algebra $\lie{t}$ acting on a closed smooth manifold $M$. Recall the following definition from \cite{GGK02}

\begin{definition} \label{absmom}
An nondegenerate abstract moment map $\mu : M \rightarrow \lie{t}^*$ is a smooth, equivariant map $\mu: M \rightarrow \lie{t}^*$ such that
for every vector $ \xi \in \lie{t}$,

(1) $Crit( \mu^{\xi} ) =
\{\xi_M = 0\}$
, and

(2) $\mu^{\xi} : M \rightarrow \R$ is a Morse-Bott function.
\end{definition}


Definition \ref{absmom} is an abstraction of the Morse theoretic properties of symplectic moment maps which are responsible for results such as Kirwan injectivity and surjectivity, as well as convexity (c.f. \cite{GGK02}). In \cite{NY07} Nitta actually proved that the components of moment map for Hamiltonian torus actions on compact generalized complex manifolds are abstract nondegenerate moment maps (see also \S \ref{g-moment-map}). Thus it follows that Kirwan injectivity and surjectivity for the usual equivariant cohomology must hold for GC-Hamiltonian actions.

To prove twisted versions of these theorems, we must impose a compatibility condition on the twisting 3-form.

\begin{definition}\label{compa}
Let $\eta \in \Omega_T^3(M) = \Omega^3(M)^T \oplus (\Omega^1(M)^T\otimes \lie{t}^*)$ and let $\eta^1$ denote the component of $\eta$ lying in $\Omega^1(M)^T\otimes \lie{t}^*$.  We say that $\eta$ is \textbf{compatible} if for all $p \in M$, $ker(\eta^1_p) \supset \lie{t}_p$, where $\eta^1$
is regarded as a linear map $\eta^1: \lie{t} \rightarrow \Omega^1(M)^T$ and $\lie{t}_p$ is the Lie algebra of the isotropy subgroup of the point $p\in M$.
\end{definition}

Our goal in this section is to prove the following two results:

\begin{theorem}\label{morseinj}[Kirwan Injectivity]
Let $M$ be a smooth, compact $T$-manifold with abstract moment map $\mu: M \rightarrow \lie{t}^*$, and compatible equivariantly closed 3-form $\eta \in \Omega_T^3(M)$. Then $M$ is \textbf{$\eta$-equivariantly formal}. In particular, the localization map $i^*: H_T(M;\eta) \rightarrow H_T(M^T; \eta)$ is injective and $$H_T(M ;\eta) \cong H(M;\eta(0))\otimes \hat{S}(\lie{t}^*)$$ noncanonically as $\hat{S}(\lie{t}^*)$-modules.
\end{theorem}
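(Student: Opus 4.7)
The plan is to emulate the Atiyah--Bott/Kirwan Morse-theoretic proof of equivariant formality, modified to handle the twisting. I will first establish $\eta$-equivariant formality (via the surjectivity criterion of Proposition \ref{lerhirsh}), and then deduce localization injectivity from freeness and the twisted localization theorem (Theorem \ref{local}).

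\textbf{Setup.} Choose $\xi \in \lie{t}$ generic so that $\{\xi_M = 0\} = M^T$. Then $f := \mu^{\xi}$ is $T$-invariant and Morse--Bott with $\mathrm{Crit}(f) = M^T$ by Definition \ref{absmom}. Enumerate the connected critical components $C_1,\dots,C_N$ in order of critical value $c_1 < \cdots < c_N$ and let $M_i = f^{-1}\bigl((-\infty, c_i + \epsilon]\bigr)$. At any $p \in M^T$ we have $\lie{t}_p = \lie{t}$, so the compatibility condition (Definition \ref{compa}) forces the component $\eta^1$ of $\eta$ to vanish on $M^T$; hence $\eta|_{C_i} = \eta(0)|_{C_i} \otimes 1$ in $\Omega_T^3(C_i)$, and by Example \ref{trivialact} we get
$$H_T(C_i;\eta) \cong H(C_i;\eta(0)) \otimes \hat{S}\lie{t}^*,$$
a free $\hat{S}\lie{t}^*$-module of rank $\dim H(C_i;\eta(0))$.

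\textbf{Inductive construction.} I will show by induction on $i$ that $H_T(M_i;\eta)$ is free over $\hat{S}\lie{t}^*$ of rank $\sum_{j \leq i}\dim H(C_j;\eta(0))$. The base case $i=1$ is immediate since $M_1$ deformation retracts onto $C_1$. For the inductive step, excision and the twisted Thom isomorphism (Proposition \ref{tocome}) applied to the (naturally orientable) negative normal bundle $N_i^-$ of $C_i$ identify
$$H_T(M_i, M_{i-1};\eta) \cong H_T(C_i;\eta)[r_i], \qquad r_i = \mathrm{rank}(N_i^-).$$
The relevant segment of the six-term sequence of the pair reads
$$H_T(C_i;\eta)[r_i] \xrightarrow{\Phi} H_T(M_i;\eta) \to H_T(M_{i-1};\eta) \xrightarrow{\delta} H_T(C_i;\eta)[r_i+1].$$
By Lemma \ref{euler}, the composition $i^* \circ \Phi$ is multiplication by the equivariant Euler class $e = e_T(N_i^-) \in H_T(C_i) = H(C_i) \otimes \hat{S}\lie{t}^*$. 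Genericity of $\xi$ makes the ``pure polynomial'' component of $e$ a nonzero element of $\hat{S}\lie{t}^*$, so after tensoring with the fraction field of $\hat{S}\lie{t}^*$, $e$ becomes a unit plus nilpotent and is therefore invertible. Consequently multiplication by $e$ is injective on the torsion-free module $H_T(C_i;\eta)$; $\Phi$ is injective, $\delta = 0$, and the resulting short exact sequence splits (the quotient being free by induction), completing the step.

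\textbf{Formality and injectivity.} Iterating yields $\mathrm{rank}_{\hat{S}\lie{t}^*} H_T(M;\eta) = \sum_i \dim H(C_i;\eta(0))$. The spectral sequence associated to the filtration (\ref{other}) gives the upper bound $\mathrm{rank}_{\hat{S}\lie{t}^*} H_T(M;\eta) \leq \dim H(M;\eta(0))$, while the non-equivariant Morse--Bott spectral sequence for $f$ on $(M,\eta(0))$ (using the untwisted case of Proposition \ref{tocome}) gives $\dim H(M;\eta(0)) \leq \sum_i \dim H(C_i;\eta(0))$. These three quantities are therefore equal; in particular the $\{L^p\}$ spectral sequence collapses at $E_1$, so $M$ is $\eta$-equivariantly formal and the non-canonical isomorphism $H_T(M;\eta) \cong H(M;\eta(0)) \otimes \hat{S}\lie{t}^*$ follows. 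Since $H_T(M;\eta)$ is free hence torsion-free, Theorem \ref{local} forces the localization map $i^*: H_T(M;\eta) \to H_T(M^T;\eta)$ to be injective. The main obstacle is ensuring the Atiyah--Bott Euler-class argument survives the twisting; this rests crucially on the compatibility hypothesis, which is precisely what makes $\eta|_{C_i}$ behave as a ``non-equivariant'' twisting so that $H_T(C_i;\eta)$ is free over $\hat{S}\lie{t}^*$ with the dominant polynomial term of $e$ controlling its action.
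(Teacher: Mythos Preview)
Your Morse-theoretic core---choosing a generic component $f=\mu^\xi$, using compatibility to identify $H_T(C_i;\eta)\cong H(C_i;\eta(0))\otimes\hat S\lie t^*$, and running the Atiyah--Bott Euler class argument to obtain split short exact sequences and hence freeness of $H_T(M;\eta)$---is correct and is exactly what the paper does (this is Lemma~\ref{patti} together with Lemma~\ref{atbot}).

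The gap is in your final paragraph. You assert that the $\{L^p\}$ spectral sequence gives $\mathrm{rank}_{\hat S\lie t^*}H_T(M;\eta)\le\dim H(M;\eta(0))$, and then that equality of the three numbers forces collapse at $E_1$. Neither implication is justified. Strong convergence only identifies $E_\infty$ with the associated graded $gr_L H_T(M;\eta)$, and for a filtration merely satisfying $L^p\supset I^p$ and $\bigcap L^p=0$ the associated graded of a free module need not have the same $\hat S\lie t^*$-rank; conversely, matching ranks at $E_1$ and in the abutment does not by itself preclude a nonzero differential whose image is torsion on some intermediate page. So the rank-count, as written, does not close the loop.

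There are two clean repairs. The first is the paper's: run your inductive short exact sequences in parallel with the non-equivariant six-term sequence for $(M_{c_i+\epsilon},M_{c_i-\epsilon})$ and do a diagram chase (surjectivity on the relative term reduces via Thom to surjectivity of $H_T(C_i;\eta)\to H(C_i;\eta(0))$, which you already have); this gives surjectivity of $H_T(M;\eta)\to H(M;\eta(0))$ directly and Proposition~\ref{lerhirsh} finishes. The second keeps your freeness conclusion and invokes universal coefficients: the Cartan complex $\Omega(M)^T\otimes\hat S\lie t^*$ is free over $\hat S\lie t^*$, so once $H_T(M;\eta)$ is free all higher $\mathrm{Tor}^{\hat S\lie t^*}_*(H_T(M;\eta),\R)$ vanish and the natural map $H_T(M;\eta)\to H(M;\eta(0))$ is identified with the quotient $H_T(M;\eta)\twoheadrightarrow H_T(M;\eta)\otimes_{\hat S\lie t^*}\R$, hence surjective. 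Either route then yields injectivity of $i^*$ exactly as you say, via Theorem~\ref{local} and torsion-freeness.
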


Theorem \ref{morseinj} may be generalized to noncompact $M$ using the weaker hypothesis that the fixed point set $M^T$ is compact and that some nonzero component of the moment map $\mu^{\xi} : M \rightarrow \R$ is proper and bounded below. Working in such generality is cumbersome, so we stick with compact $M$.

\begin{theorem}\label{morsesurj}[Kirwan Surjectivity]
Let $M$ be a smooth, compact $T$-manifold with abstract moment map $\mu: M \rightarrow \lie{t}^*$, and compatible equivariantly closed 3-form $\eta \in \Omega_T(M)$. Suppose $M$ admits an invariant almost complex structure. Then the map in equivariant cohomology induced by inclusion of the zero level set:

$$ H_T( M ; \eta) \rightarrow H_T( \mu^{-1}(0) ;\eta)$$
is a surjection. In the event that $T$ acts freely on $\mu^{-1}(0)$ then $H_T(\mu^{-1}(0);\eta) \cong H(\mu^{-1}(0)/T;\bar{\eta})$ as explained in Proposition \ref{freeaction}.
\end{theorem}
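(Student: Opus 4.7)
The plan follows the Morse-theoretic strategy for Kirwan surjectivity from Ginzburg--Guillemin--Karshon \cite{GGK02}, Appendix G, modified throughout to handle the twisting.

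First I would reduce to the case $T = S^1$ by induction on $\dim T$. Writing $T = S^1 \times T'$ and letting $\xi \in \lie{t}$ generate the $S^1$-factor, the $S^1$-version of the theorem (applied $T$-equivariantly) produces a surjection $H_T(M;\eta) \twoheadrightarrow H_T((\mu^\xi)^{-1}(0);\eta)$. The zero level set $(\mu^\xi)^{-1}(0)$ inherits a $T$-action with the restriction of $\mu$ as a nondegenerate abstract moment map and with a compatible restricted twisting, so applying the same theorem to $T'$ iteratively cuts out the remaining components of $\mu$ and reduces to the base case.

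For $T = S^1$, Proposition \ref{moment-map-Morse-Bott} makes $\mu : M \to \R$ a Morse--Bott function with critical set $M^{S^1}$; since $0$ is a regular value, some interval $[-\epsilon,\epsilon]$ contains no critical values of $\mu$, and $U := \mu^{-1}(-\epsilon,\epsilon)$ deformation retracts $T$-equivariantly onto $\mu^{-1}(0)$. It is therefore enough to prove $H_T(M;\eta) \to H_T(U;\eta)$ is surjective. I would construct a filtration $U = U_0 \subset U_1 \subset \cdots \subset U_n = M$ by successively attaching open tubular neighbourhoods of the critical manifolds one at a time in order of increasing distance of the critical value from $0$, and show that each inclusion $U_{j-1} \hookrightarrow U_j$ induces a surjection in twisted equivariant cohomology.

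Each such step is controlled by the six-term exact sequence of the pair $(U_j,U_{j-1})$ in twisted equivariant cohomology. By excision the relative group depends only on a tubular neighbourhood of the new critical manifold $F_j$; the twisted Thom isomorphism of Proposition \ref{tocome}, with the orientation on the negative normal bundle $\nu^-_{F_j}$ furnished by the invariant almost complex structure, identifies it with $H_T(F_j;\eta|_{F_j})$ shifted by $\rank \nu^-_{F_j}$; and Lemma \ref{euler} identifies the connecting homomorphism with multiplication by the equivariant Euler class $e(\nu^-_{F_j}) \in H_T^*(F_j)$ acting on $H_T(F_j;\eta|_{F_j})$. Surjectivity in each step is thus equivalent to injectivity of this multiplication.

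The main obstacle is exactly this last injectivity, and here is where the compatibility condition on $\eta$ does the essential work. Since $F_j \subset M^{S^1} = M^T$, compatibility forces $\eta^1|_{F_j} = 0$, so $\eta|_{F_j}$ is an ordinary $d$-closed 3-form on $F_j$ and Example \ref{trivialact} yields a canonical isomorphism $H_T(F_j;\eta|_{F_j}) \cong H(F_j;\eta(0)|_{F_j}) \otimes_\R \hat{S}\lie{t}^*$. The equivariant Euler class $e(\nu^-_{F_j})$ is then a polynomial in the generator $x$ of $\hat{S}\lie{t}^* \cong \R[[x]]$ whose top-$x$-degree coefficient is $\prod_i a_i \in H^0(F_j) = \R$, the product of the (necessarily nonzero) $S^1$-weights on $\nu^-_{F_j}$. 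A standard lowest-total-degree argument, exploiting the grading on $H(F_j;\eta(0)|_{F_j}) \otimes \hat{S}\lie{t}^*$ and the fact that $\prod_i a_i \neq 0$, then shows this class is not a zero-divisor and hence acts injectively. Iterating completes the proof, and the final identification $H_T(\mu^{-1}(0);\eta) \cong H(\mu^{-1}(0)/T;\bar\eta)$ in the free case is Proposition \ref{freeaction}.
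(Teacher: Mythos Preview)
Your overall strategy---reduction in stages by cutting down one circle at a time, with the single-circle step handled by Morse theory and the Atiyah--Bott self-perfecting Euler class argument---is exactly the paper's approach (which follows Goldin and \cite{GGK02}, Appendix G). The $S^1$ base case you describe is correct; the paper organizes it slightly differently by using $(\mu^{\xi})^2$, whose minimum is $\mu^{-1}(0)$, in place of your two-sided filtration by distance of the critical value from $0$, but that is cosmetic. (A small slip: you cite Proposition~\ref{moment-map-Morse-Bott} for the Morse--Bott property, but in the setting of Theorem~\ref{morsesurj} this is just Definition~\ref{absmom}.)

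The genuine gap is in the inductive step. You assert that $(\mu^{\xi})^{-1}(0)$ ``inherits \ldots\ the restriction of $\mu$ as a nondegenerate abstract moment map'' and then propose to apply the theorem for $T'$ to it. Two problems. First, $(\mu^{\xi})^{-1}(0)$ has codimension one in $M$, hence is odd-dimensional, so it cannot carry an invariant almost complex structure; you therefore cannot invoke the theorem by induction as stated. Second, and more seriously, the claim that the remaining moment-map components restrict to a \emph{nondegenerate} abstract moment map on $(\mu^{\xi})^{-1}(0)$ is not automatic: one must choose the circle carefully and then verify the Morse--Bott property of the next component on the level set. This is precisely the content of Proposition~\ref{immunology}, whose proof (Appendix~\ref{nondegabs}) uses the invariant almost complex structure on the \emph{original} $M$ to obtain a local normal form near each orbit, from which the Morse--Bott nondegeneracy follows. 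The paper itself remarks that this point ``was later recognized to be subtler than it appears.'' Once Proposition~\ref{immunology} is in hand, the paper does not induct on the theorem but applies Lemma~\ref{surj} directly at each stage; Lemma~\ref{surj} is your $T$-equivariant single-circle argument, with critical manifolds $C \subset M^{T_C}$ for various subtori $T_C$ (not all of $T$), and Lemma~\ref{atbot} supplies the required Euler-class injectivity using the compatibility of $\eta$.
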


As before, Theorem \ref{morsesurj} may be generalized to include some examples of noncompact manifolds but for the sake of simplicity we work with compact $M$. In our proof of Theorem \ref{morsesurj} we found it necessary to require a invariant almost complex structure, though we suspect the theorem holds without this additional hypothesis. The presence of an invariant almost complex structure in the case of a GC Hamiltonian actions was proven by Nitta \cite{NY07} and played an important part in his work.


Let $f: M \rightarrow \R$ be a smooth function. We denote the critical set of $f$ by $Crit(f) = \{ x \in M | df_x = 0\}$. For $x \in Crit(f)$, the Hessian $Hess_x(f): T_xM \rightarrow T^*_xM$ is the symmetric linear map defined by the formula $$< Hess_x(f) (v) , w>= w \cdot L_{\tilde{v}}f,$$ where $v,w \in T_xM$, $\tilde{v}$ is any vector field satisfying $\tilde{v}_x = v$, $L$ is the Lie derivative and $<,>$ is the pairing between $T_xM$ and $T_x^*M$. The Hessian is more often defined as the quadratic form $<Hess_x(.),.>$, but the definition as a linear map is more convenient for us.

\begin{definition}
Let $M$ be a smooth, closed manifold. A smooth function $f:M \rightarrow \R$ is called \textbf{Morse-Bott} if the connected components of $Crit(f) = \{ x\in M |df_x =0\}$ are closed submanifolds of $M$ and for all $x \in Crit(f)$ the kernel of the Hessian satisfies $\ker(Hess_x(f)) = T_xCrit(f)$.
\end{definition}

Let $\{ C_i | i\in 0, 1,2,...,n\}$ be the set of connected components of $Crit(f)$. The function $f$ is constant on each component $C_i$ and we define $c_i := f(C_i) \in \R$. We will assume for simplicity of exposition that $c_i = c_j$ if and only if $i =j$, though all the proofs can be adapted to work without this assumption. We choose the indexing $i = 0,1,...,n$ so that $c_i < c_j$ if and only if $i <j$.

Choose a Riemannian metric $g$ on $M$. Using $g$ to identify $TM \cong T^*M$, we may regard $Hess_x(f)$ as an automorphism of $T_xM$ for $x\in Crit(f)$.  Because it is symmetric, $Hess_x(f)$ is diagonalizable with real eigenvalues. We define the negative normal bundle $\nu_i$ of $C_i$ by setting $\nu_{i, x}$ to equal the sum of negative eigenspaces of $Hess_x(f)$. Up to isomorphism, $\nu_i$ is independent of the choice of $g$. We call the rank of $\nu_i$ the \textbf{index} of $C_i$ and denote it $\lambda(i)$. In the presence of a compact torus $T$-action on $M$ leaving $f$ and $g$ invariant, the $\nu_i$ become equivariant vector bundles over $C_i$.

Let $M_t := f^{-1}((-\infty, t))$. If the interval $[s,t]$ contains no critical values for $f$, the inclusion $M_s \hookrightarrow M_t$ is a homotopy equivalence. In particular, if a torus $T$ acts on $M$ leaving $f$ invariant and $\eta \in \Omega_T^3(M)$ is a closed equivariant 3-form, then $H_T(M_t; \eta) \cong H_T(M_s;\eta)$.

Thus for some $\epsilon > 0 $ sufficiently small, we obtain for each critical value $c_i$ a six term exact sequences:

\begin{equation}\begin{CD}\label{rect}
H_T^0(M_{c_i+\epsilon}, M_{c_i-\epsilon}; \eta) @>>> H_T^0(M_{c_i+\epsilon};\eta) @>>> H_T^0(M_{c_i -\epsilon};\eta)  \\
@AAA  @.   @VVV\\
H_T^1(M_{c_i -\epsilon};\eta) @<<< H_T^1(M_{c_i+\epsilon};\eta) @<<< H_T^1(M_{c_i+\epsilon}, M_{c_i-\epsilon};\eta)
\end{CD}\end{equation}
and canonical isomorphisms $H_T(M_{c_i + \epsilon}) \cong H_T(M_{c_{i+1}-\epsilon})$.

Using excision and the Thom isomorphism, we obtain isomorphisms:

\begin{equation}\label{shortexit}
H_T^*(M_{c +\epsilon},M_{c-\epsilon}; \eta) \cong  H_T^*( \nu_i , \nu_i - 0; \eta) \cong  H_T^{*+\lambda(i)}(C_i; \eta)
\end{equation}
where the superscript grading is taken mod 2.





\begin{definition}
A $T$-invariant Morse-Bott function $f$ is called \textbf{$\eta$-equivariantly perfect} if the vertical arrows in (\ref{rect}) are zero for all critical values $c_i$.
\end{definition}

An important consequence is that $\oplus_i H_T(M_{c_i+\epsilon}, M_{c_i-\epsilon}; \eta) $ is isomorphic to an associated graded object of $H_T(M;\eta)$.

\begin{proposition}\label{smith}
Suppose that $f$ is bounded below, $\eta$-equivariantly perfect Morse-Bott function on $M$ and that the negative normal bundles are all orientable.  Then there is an isomorphism of $\hat{S} \lie{t}^*$-modules $$gr(H_T^*(M; \eta)) \cong \oplus_i H^{*+\lambda(i)}_T(C_i;\eta)$$ where $gr(H_T(M; \eta))$ is the associated graded ring determined by the topological filtration $M_s$ of $M$ and $\lambda(i) \in \{0,1\}$ is the index of $C_i$ mod $2$.
\end{proposition}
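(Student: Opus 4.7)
The plan is to read the associated graded of $H_T^*(M;\eta)$ off the Morse--Bott filtration of $M$ by sublevel sets of $f$. Because $f$ is bounded below and $M$ is compact, I would first choose regular values $a_0 < c_0 < a_1 < c_1 < \cdots < c_n < a_{n+1}$ interlacing the critical values and set $N_i := M_{a_i}$, giving $\emptyset = N_0 \subset N_1 \subset \cdots \subset N_{n+1} = M$. By the homotopy invariance observation preceding (\ref{rect}), the six-term exact sequence attached to the critical value $c_i$ may be identified with the long exact sequence of the pair $(N_{i+1},N_i)$.

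Next I would invoke $\eta$-equivariant perfection. The vanishing of the vertical arrows in (\ref{rect}) splits each six-term sequence into two short exact sequences
\begin{equation*}
0 \longrightarrow H_T^j(N_{i+1}, N_i;\eta) \longrightarrow H_T^j(N_{i+1};\eta) \longrightarrow H_T^j(N_i;\eta) \longrightarrow 0
\end{equation*}
for $j \in \Z/2$. Inducting on $i$ (starting from $H_T^j(N_0;\eta) = 0$), each projection $H_T^j(M;\eta) \to H_T^j(N_i;\eta)$ is surjective, and letting $F^i$ denote its kernel produces a decreasing filtration $0 = F^{n+1} \subset F^n \subset \cdots \subset F^0 = H_T^j(M;\eta)$ whose successive quotients are $F^i/F^{i+1} \cong H_T^j(N_{i+1}, N_i;\eta)$.

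Finally, excision together with the Thom isomorphism (\ref{shortexit}), which requires the orientability hypothesis on the negative normal bundles, identify $H_T^j(N_{i+1}, N_i;\eta) \cong H_T^{j+\lambda(i)}(C_i;\eta)$, and summing over $i$ gives the stated isomorphism. The main obstacle I anticipate is not conceptual but bookkeeping: one must verify that all maps involved are $\hat{S}\lie{t}^*$-linear so that the identification of the associated graded respects the module structure. This holds because the connecting homomorphisms, excision isomorphisms, wedging with the equivariant Thom class, and the correction factor $\exp(b)\wedge$ appearing in the twisted Thom isomorphism of Proposition \ref{tocome} are all $H_T^*(M)$-module maps. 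A small amount of care with the $\Z/2$-grading on twisted cohomology is also needed, but this is exactly what the index shift $\lambda(i)$ accommodates.
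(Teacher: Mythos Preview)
Your argument is correct and follows essentially the same route as the paper: use $\eta$-equivariant perfection to split the six-term sequences (\ref{rect}) into short exact sequences, read off the associated graded as $\bigoplus_i H_T(M_{c_i+\epsilon},M_{c_i-\epsilon};\eta)$, and then apply the Thom isomorphism (\ref{shortexit}). Your version is somewhat more explicit about the filtration indexing and the $\hat{S}\lie{t}^*$-linearity, but there is no substantive difference.
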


\begin{proof}
Because $f$ is $\eta$-equivariantly perfect, the exact sequence (\ref{rect}) decomposes into exact sequences \begin{equation} \label{exact-sequence} 0 \rightarrow H_T(M_{c_i+\epsilon}, M_{c_i-\epsilon}) \rightarrow H_T(M_{c_i+\epsilon};\eta)  \rightarrow H_T(M_{c_i -\epsilon};\eta) \rightarrow 0 \end{equation}
It follows that $gr( H_T(M; \eta)) \cong \oplus_i H_T(M_{c_i+\epsilon}, M_{c_i-\epsilon}) $. Applying (\ref{shortexit}) completes the proof.
\end{proof}

It was noticed by Atiyah and Bott that an invariant Morse-Bott function can sometimes be shown to be equivariantly perfect using only negative normal bundle data as follows. If the negative normal bundle is orientable, we construct a commutative diagram:

\begin{equation}\begin{CD}
 H_T(M_{c +\epsilon},M_{c-\epsilon}; \eta)@>j>>  H_T(M_{c +\epsilon};\eta) @>>> H_T(M_{c -\epsilon};\eta) \\
  @VV \cong V  @VVV\\            H_T( \nu_i , \nu_i - 0; \eta)  @>>> H_T( \nu_i; \eta)   \\
            @AA \cong A  @VV \cong V\\             H_T(C_i; \eta)    @>>\cup \text{Eul}_T(\nu_i)>  H_T(C_i; \eta)
\end{CD}\end{equation}
where the upper square is excision and the bottom square is from Lemma \ref{euler}. If $\cup Eul_T(\nu_i): H_T(C_i; \eta) \rightarrow H_T(C_i; \eta)$ is injective then $j$ must also be injective. We obtain the self perfecting principle:

\begin{lemma}\label{astral}
Suppose that for all critical sets $C_i$, $Eul_T(\nu_i)$ is not a zero divisor for $H_T(C_i;\eta)$, i.e. for all $\alpha\in H_T(C_i; \eta)$, we have $ \alpha \cup Eul_T(\nu_i) = 0$ if and only if $\alpha = 0$. Then $f$ is $\eta$-equivariantly perfect.
\end{lemma}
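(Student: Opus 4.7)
The plan is to reduce the statement directly to the commutative diagram that was just constructed before the lemma. What needs to be shown is that for each critical value $c = c_i$, the two connecting maps in the six term exact sequence (\ref{rect}) vanish. By exactness of (\ref{rect}), this is equivalent to showing that the map
\[ j : H_T(M_{c+\epsilon}, M_{c-\epsilon}; \eta) \longrightarrow H_T(M_{c+\epsilon}; \eta) \]
is injective in both $\Z_2$-degrees.

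First I would unravel the commutative diagram immediately preceding the lemma. Going down the left column, $H_T(M_{c+\epsilon}, M_{c-\epsilon}; \eta)$ is identified with $H_T(C_i;\eta)$ via excision and the Thom isomorphism of Proposition \ref{tocome}. Going down the right column, $j$ is followed by the composition $H_T(M_{c+\epsilon};\eta) \to H_T(\nu_i;\eta) \xrightarrow{\sim} H_T(C_i;\eta)$. Lemma \ref{euler} is exactly the assertion that, after these identifications, the diagram commutes with the bottom horizontal map being $\cup \,\text{Eul}_T(\nu_i)$.

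Now the hypothesis is applied: since $\text{Eul}_T(\nu_i)$ is not a zero divisor on $H_T(C_i;\eta)$, the map $\cup \,\text{Eul}_T(\nu_i): H_T(C_i;\eta) \to H_T(C_i;\eta)$ is injective. From the commutative diagram, the composition starting at $H_T(M_{c+\epsilon}, M_{c-\epsilon}; \eta)$, going along $j$ and then down to $H_T(C_i;\eta)$, equals the composition of an isomorphism with this injective multiplication, hence is itself injective. It follows immediately that $j$ is injective. This is valid in each $\Z_2$-degree, so both connecting homomorphisms in (\ref{rect}) must be zero, which is the definition of $f$ being $\eta$-equivariantly perfect.

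There is no real obstacle here; the entire difficulty has been packaged into Lemma \ref{euler} (the twisted compatibility of Thom isomorphism with pullback and the Euler class) and into Proposition \ref{tocome} (the twisted Thom isomorphism). Once those are in place, the argument is a one-line diagram chase combined with the exactness of the six term sequence. The only minor point to be careful about is that the $\Z_2$-grading, rather than the usual $\Z$-grading, is in force throughout, so one must verify injectivity of $j$ in both parities, but the same diagram argument handles both cases uniformly.
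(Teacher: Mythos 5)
Your proof is correct and matches the paper's intended argument exactly: the paper presents the diagram and the implication ``if $\cup \mathrm{Eul}_T(\nu_i)$ is injective then $j$ is injective'' inline before the lemma statement, and the step from injectivity of $j$ to vanishing of the connecting maps in (\ref{rect}) is the same exactness observation you spell out. Your remark about checking both $\Z_2$-parities is a reasonable bit of care that the paper leaves implicit.
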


Atiyah and Bott discovered a simple criterion implying that $Eul_T(\nu_i)$ is not a zero divisor in the nontwisted setting. We adapt their proof to the twisted case.

\begin{lemma}\label{atbot}
Let $\nu \rightarrow N$ be a $T$-equivariant oriented vector bundle over a compact manifold $N$ and suppose there exists a subtorus $ S \subset T$ such that $\nu^S$ is exactly the zero section of $\nu$. Let $\eta \in \Omega_{T/S}(N) \hookrightarrow \Omega_T(N)$ under the natural inclusion (see Example \ref{trivialact2}).  Then $Eul_T(\nu)$ is not a zero divisor for $H_T(N;\eta)$.
\end{lemma}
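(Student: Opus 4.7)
The plan is to reduce via a standard generic-circle argument to $\dim S = 1$: since $\nu^S$ is the zero section, the $S$-weights of $\nu$ are all nontrivial characters, so for a generic circle $S^1 \subset S$ they remain nontrivial upon restriction, and the hypothesis $\eta \in \Omega_{T/S}(N)$ passes to $\eta \in \Omega_{T/S^1}(N)$ under the natural inclusion. Replacing $S$ by this circle, I assume henceforth $S = S^1$. Because $S$ acts trivially on $N$, the bundle decomposes globally as $\nu = \bigoplus_{k \neq 0} \nu_k$ into $S$-weight subbundles; choosing a splitting $T \cong S \times T/S$, I write $\nu_k = \tilde{\nu}_k \otimes L_k$ with $L_k$ the trivial line bundle of $S$-weight $k$ (and $T/S$ acting trivially) and $\tilde{\nu}_k$ a $T/S$-equivariant bundle. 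The splitting principle then gives
\[ Eul_T(\nu) = \prod_k \prod_i (x_i^{(k)} + k y), \]
where $y \in \lie{s}^*$ is a generator and $x_i^{(k)} \in H^2_{T/S}(N)$ are Chern roots of $\tilde{\nu}_k$.

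A product of non-zero-divisors in a commutative ring is a non-zero-divisor, so it suffices to show each factor $(x + ky)$ acts injectively on $H_T(N;\eta)$. By Example \ref{trivialact2}, $H_T(N;\eta) \cong M[[y]]$ where $M := H_{T/S}(N;\eta)$, with $y$ acting as the formal variable and $x$ through the module structure on $M$. If $Q = \sum_{r \geq 0} Q^{(r)} y^r$ satisfies $(x + ky) Q = 0$, then comparing coefficients yields $x Q^{(0)} = 0$ and $x Q^{(r)} = -k Q^{(r-1)}$ for $r \geq 1$. Iterating gives $Q^{(j)} = (-x/k)^{r-j} Q^{(r)}$ for every $r \geq j$, so each $Q^{(j)}$ must lie in $\bigcap_{n \geq 0} x^n M$.

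The key remaining step, and the place where the argument must depart from the classical Atiyah-Bott nilpotence proof, is to show $\bigcap_n x^n M = 0$: since $T/S$ may act nontrivially on $N$, the class $x \in H^2_{T/S}(N)$ is generally not nilpotent in the unbounded ring $H^*_{T/S}(N)$. To substitute for nilpotence I plan to use the $L^p$-filtration by $(\lie{t}/\lie{s})^*$-degree from Section \ref{Spectral Sequences}: choosing a Cartan representative $\tilde{x} = \tilde{x}_0 + \tilde{x}_1$ with $\tilde{x}_0 \in \Omega^2(N)^{T/S}$ the $(\lie{t}/\lie{s})^*$-degree-zero component, the binomial expansion of $\tilde{x}^n$ has every summand containing the factor $\tilde{x}_0^{n-k}$, which vanishes as soon as $2(n-k) > \dim N$. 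Hence for $n$ large $\tilde{x}^n$ lies arbitrarily deep in the $L^\bullet$-filtration, and on passing to cohomology $x^n M$ lies arbitrarily deep in the induced filtration $F^\bullet$ on $M$. Strong convergence of the spectral sequence (Section \ref{Spectral Sequences}) makes $F^\bullet$ Hausdorff, so $\bigcap_n x^n M \subset \bigcap_p F^p M = 0$, forcing $Q = 0$ and completing the proof.
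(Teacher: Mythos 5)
Your core mechanism for replacing Atiyah--Bott nilpotence is sound: the $L^p$-filtration (by $S(\lie{t}/\lie{s})^*$-degree) combined with strong convergence does give a Hausdorff filtration on $H_{T/S}(N;\eta)$, and your binomial-expansion argument correctly shows that powers of a class with vanishing $\lie{s}^*$-component fall arbitrarily deep in that filtration. This is a genuine alternative to the paper's approach, which instead cites the Atiyah--Bott expansion $Eul_T(\nu) = 1\otimes\beta_0 + (\text{terms of positive }H^*_{T/S}(N)\text{-degree})$ with $\beta_0\neq 0 \in S(\lie{s}^*)$, and then invokes Nakayama's lemma applied to the augmentation ideal $I = \prod_{k>0}H^k_{T/S}(N)$ to get $\bigcap_p I^p H_T(N;\eta)=0$; for a nonzero $\alpha$ it then shows $\alpha\cup Eul_T(\nu) \equiv \alpha\cup\beta_0 \pmod{F^{p(\alpha)+1}}$ is nonzero, with no passage to a circle or to individual weight bundles.

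However, there is a genuine gap in your factorization step. You write $Eul_T(\nu) = \prod_k\prod_i(x_i^{(k)} + ky)$ with ``$x_i^{(k)}\in H^2_{T/S}(N)$ Chern roots of $\tilde\nu_k$'' and then reduce to showing each linear factor acts injectively. But Chern roots of a bundle of rank $>1$ do not live in $H^2_{T/S}(N)$; they live only in the cohomology of the associated flag bundle $Fl(\nu)\to N$. The statement ``a product of non-zero-divisors is a non-zero-divisor, so it suffices to check each factor'' therefore does not literally apply in the ring that acts on $H_T(N;\eta)$. To repair this you would need to pass to the flag bundle, establish a \emph{twisted} Leray--Hirsch statement so that $H_T(N;\eta)\hookrightarrow H_T(Fl(\nu);p^*\eta)$ (which can be done via Proposition \ref{doubledip} once one has cocycle representatives of the Chern roots giving a chain-level Leray--Hirsch splitting), and then run your $L^p$-argument on the compact manifold $Fl(\nu)$ rather than on $N$. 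Alternatively, one can avoid the flag bundle entirely by expanding $Eul_T(\nu)=\sum_j e_j$ with $e_j$ of $S^j\lie{s}^*$-degree $j$, noting the top coefficient is a nonzero scalar (the product of the $S$-weights), and iterating the resulting coefficient relations to show the kernel lies in $\bigcap_n I^n M$ --- at which point your $L^p$-filtration argument (or the paper's Nakayama argument) finishes. As written, though, the reduction to linear factors is not justified.
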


\begin{proof}

By Example \ref{trivialact2} the untwisted equivariant cohomology satisfies$$H_T(N) \cong H_{T/S}(N) \otimes \hat{S}(\lie{s}^*).$$ It was shown in \cite{AB82} \S 13, that the equivariant Euler class $Eul_T(\nu) \in H_T(N)$ satisfies

$$ Eul_T(\nu) = 1 \otimes \beta_0 + \text{positive degree terms in } H^*_{T/S}(N)$$
where $\beta_0 \in S(\lie{s}^*)$ is nonzero.

Also by Example \ref{trivialact2}, we have a natural isomorphism $$H_T(N; \eta) \cong H_{T/S}(N;\eta) \otimes \hat{S}(\lie{s}^*)$$ The ideal $I = \prod_{k>0}H_{T/S}^{k}(N)$ is the Jacobson ideal of $H_{T/S}(N)$, so by Nakayama's Lemma the filtration $ \{F^p\}$ of $H_T(N;\eta)$ defined by $F^p := I^p \cup H_{T/S}(N;\eta) \otimes \hat{S}(\lie{s}^*)$ satisfies $ \cap_p F^p =0$ (see Appendix \ref{commalg}). For $\alpha \in H_T(N;\alpha)$ nonzero, define $p(\alpha)$ by $ \alpha \in F^{p(\alpha)} - F^{p(\alpha)+1}$. It follows that $$ \alpha \cup Eul_T(\nu) = \alpha \cup \beta_0  \text{ modulo } F^{p+1}$$ which is nonzero.
\end{proof}

Notice that Definition \ref{compa} ensures that if $N$ is a component of $M^S$ and $\nu$ is a subbundle of the normal bundle of $N$ in $M$, then the hypotheses of Lemma \ref{atbot} apply.

\begin{lemma}\label{patti}
Under the hypotheses of Theorem \ref{morseinj}, for a generic choice of $\xi \in \lie{t}$, the moment map component $\mu^{\xi}:M \rightarrow \R$ is $\eta$-equivariantly perfect.
\end{lemma}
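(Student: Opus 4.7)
The plan is to reduce the lemma to the self-perfecting principle (Lemma \ref{astral}) combined with the Atiyah--Bott non-zero-divisor criterion (Lemma \ref{atbot}). Specifically, we will choose $\xi \in \lie{t}$ generic enough that every critical component of $\mu^\xi$ is a component of the full fixed point set $M^T$, and then verify that on each such component the hypotheses of Lemma \ref{atbot} hold with subtorus $S=T$.

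For the choice of $\xi$: a compact $T$-manifold has only finitely many isotropy subalgebras, so we may pick $\xi$ in the complement of the finite union of proper subspaces $\bigcup_{p\,:\,\lie{t}_p \neq \lie{t}} \lie{t}_p$. Then $\xi \in \lie{t}_p$ if and only if $p \in M^T$, and by the abstract moment map axiom $Crit(\mu^\xi) = \{\xi_M = 0\} = M^T$. Let the $C_i$ be its connected components; then $T$ acts trivially on each $C_i$. Fix an invariant Riemannian metric on $M$. The normal bundle $N_i$ of $C_i$ in $M$ satisfies $N_i^T = 0$ and splits $T$-equivariantly into real rank-$2$ weight subbundles, each carrying a canonical complex structure (hence orientation) induced by the $T$-action. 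The Hessian of $\mu^\xi$ commutes with $T$ and therefore preserves this weight decomposition, so the negative eigenbundle $\nu_i \subseteq N_i$ is a direct sum of such weight bundles; in particular $\nu_i$ is orientable and $\nu_i^T = 0$.

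It remains to interpret $\eta$ correctly on $C_i$. At any $p \in C_i \subseteq M^T$ we have $\lie{t}_p = \lie{t}$, so the compatibility condition forces $\ker(\eta^1_p) = \lie{t}$; that is, $\eta^1|_{C_i} = 0$. Hence $\eta|_{C_i} = \eta^0|_{C_i}$ is a pure invariant $3$-form, which under the isomorphism $\Omega_T(C_i) \cong \Omega(C_i) \otimes \hat{S}\lie{t}^*$ of Example \ref{trivialact2} (applied with $H = G = T$) is exactly the image of $\eta^0|_{C_i} \in \Omega_{T/T}(C_i) = \Omega(C_i)$ under the natural inclusion. Lemma \ref{atbot} with $S = T$ then shows that $Eul_T(\nu_i)$ is not a zero divisor in $H_T(C_i;\eta)$ for every $i$, and Lemma \ref{astral} concludes that $\mu^\xi$ is $\eta$-equivariantly perfect.

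I expect the only real subtlety to be this last step: recognizing that the compatibility condition is precisely what is needed to place $\eta|_{C_i}$ in $\Omega_{T/T}(C_i)$, so that Lemma \ref{atbot} applies on the nose. Everything else is the standard Atiyah--Bott argument for torus actions, transposed to the twisted setting via the machinery developed in Section \ref{t-eq-cohomology}.
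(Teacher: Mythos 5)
Your proof is correct and follows essentially the same route as the paper: choose $\xi$ generic so that $\mathrm{Crit}(\mu^\xi)=M^T$, reduce via Lemma \ref{astral} to showing $Eul_T(\nu_i)$ is not a zero divisor on $H_T(C_i;\eta)$, and apply Lemma \ref{atbot} with $S=T$. You additionally spell out two points the paper leaves implicit---orientability of the negative normal bundles via the weight decomposition, and the precise way compatibility of $\eta$ places $\eta|_{C_i}$ in $\Omega_{T/T}(C_i)\otimes 1$ so that Lemma \ref{atbot} applies---which is a worthwhile clarification rather than a different argument.
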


\begin{proof}

For generic choice $\xi \in \lie{t}$, the image of $exp: \text{span} \{\xi\} \rightarrow T$ is dense. Letting $f = \mu^{\xi}$, it follows that $$ Crit(f) = \{p \in M| \xi_M = 0\} = M^T$$
By Lemma \ref{astral}, it suffices to show for each connected component $C_i$ of $M^T$ with negative normal bundle $\nu_i \rightarrow C_i$, that $H(C_i; \eta)$ possesses no $Eul_T(\nu_i)$-torsion. This is a consequence of Lemma \ref{atbot} in the case $S=T$.
\end{proof}

\begin{proof}[Proof of Theorem \ref{morseinj}]
By Proposition \ref{lerhirsh}, equivariant formality is equivalent to surjectivity of the natural map $H_T(M;\eta) \rightarrow H(M;\eta)$. We prove this by induction on $H(M_t;\eta)$ where $M_t := f^{-1}((-\infty, t))$ and $f$ a generic component of the moment map as in Lemma \ref{patti}.

For the base case $M_t$ is empty for small $t$ because $M$ is compact.

In the induction step, assume that $H_T( M_{c_i - \epsilon};\eta) \rightarrow  H( M_{c_i - \epsilon};\eta)$ is surjective. Using long exact sequences for the pair and Lemma \ref{patti} we obtain a commutative diagram:

\begin{equation}\begin{CD}
\xymatrix{  0 \ar[r] & H_T(  M_{c_i + \epsilon}, M_{c_i - \epsilon};\eta)\ar[d] \ar[r]& H_T( M_{c_i + \epsilon};\eta) \ar[d] \ar[r] &H_T( M_{c_i - \epsilon};\eta) \ar[r] \ar @{->>}[d]&0\\
& H(  M_{c_i + \epsilon}, M_{c_i - \epsilon};\eta) \ar[r]& H( M_{c_i + \epsilon};\eta) \ar[r] &H( M_{c_i - \epsilon};\eta)  &
}
\end{CD}\end{equation}
By a diagram chase we are reduced to proving that $H_T(  M_{c_i + \epsilon}, M_{c_i - \epsilon};\eta) \rightarrow H(  M_{c_i + \epsilon}, M_{c_i - \epsilon};\eta)$ surjects. By the Thom isomorphism this is equivalent to showing that the critical sets $C_i$ are equivariantly formal. By the compatibility of $\eta$ this follows from Example \ref{trivialact}.

The injectivity of $H_T(M;\eta) \rightarrow H_T(M^T;\eta)$ follows from \ref{local}, because $H_T(M;\eta)$ is torsion free.
\end{proof}

\begin{corollary}\label{qudimes}
Under the hypotheses of \ref{morseinj} we have an equality $$ \dim H(M; \eta(0)) = \dim H(M^T;\eta(0))$$
\end{corollary}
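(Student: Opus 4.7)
The plan is to view the Kirwan injection $i^*: H_T(M;\eta) \hookrightarrow H_T(M^T;\eta)$ as an injection of free $\hat{S}(\lie{t}^*)$-modules with torsion cokernel, and then read off the desired equality by matching ranks.

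First I would invoke Theorem \ref{morseinj}: since $M$ is $\eta$-equivariantly formal, there is a (noncanonical) isomorphism $H_T(M;\eta)\cong H(M;\eta(0))\otimes\hat{S}(\lie{t}^*)$ of $\hat{S}(\lie{t}^*)$-modules. This exhibits $H_T(M;\eta)$ as free of rank $\dim H(M;\eta(0))$, and by the estimate \eqref{egadyikes} applied to the compact manifold $M$ this rank is finite.

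Next I would analyze the restriction $\eta|_{M^T}$. Writing $\eta=\eta^0+\eta^1$ as in Definition \ref{compa}, at a fixed point $p\in M^T$ we have $\lie{t}_p=\lie{t}$, so the compatibility condition $\ker(\eta^1_p)\supset \lie{t}_p$ forces $\eta^1_p=0$. Thus $\eta|_{M^T}$ coincides with $\eta(0)|_{M^T}\otimes 1\in \Omega^3_T(M^T)$, and Example \ref{trivialact} yields $H_T(M^T;\eta)\cong H(M^T;\eta(0))\otimes \hat{S}(\lie{t}^*)$, which is free of rank $\dim H(M^T;\eta(0))$ (again finite because $M^T$ is compact).

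Finally I would combine the two inputs on $i^*$: Theorem \ref{morseinj} guarantees injectivity, while the twisted localization Theorem \ref{local} guarantees that the cokernel is $\hat{S}(\lie{t}^*)$-torsion. Tensoring the short exact sequence $0\to H_T(M;\eta)\to H_T(M^T;\eta)\to \func{coker}(i^*)\to 0$ with the field of fractions of $\hat{S}(\lie{t}^*)$ annihilates the torsion cokernel and produces an isomorphism of finite-dimensional vector spaces, so the two ranks must agree, giving $\dim H(M;\eta(0))=\dim H(M^T;\eta(0))$. I do not anticipate a genuine obstacle here since every ingredient is already proved; the only point requiring any care is step two, where one must use compatibility rather than merely invariance to ensure that $\eta|_{M^T}$ decouples from the $\lie{t}^*$-factor and Example \ref{trivialact} applies on the nose.
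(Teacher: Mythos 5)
Your argument is correct, but it follows a genuinely different route from the paper. You derive the equality of ranks abstractly: after using Theorem~\ref{morseinj} to make $H_T(M;\eta)$ free of rank $\dim H(M;\eta(0))$, and the compatibility of $\eta$ together with Example~\ref{trivialact} to make $H_T(M^T;\eta)$ free of rank $\dim H(M^T;\eta(0))$, you feed the injection $i^*$ with torsion cokernel (Theorems~\ref{morseinj} and~\ref{local}) into the fraction field of $\hat{S}(\lie{t}^*)$ to match ranks. The paper instead stays inside the Morse machinery: it invokes the $\eta$-equivariant perfection of a generic component $\mu^\xi$ (Lemma~\ref{patti} and Proposition~\ref{smith}) to write $\operatorname{gr}(H_T(M;\eta))\cong\bigoplus_i H(C_i;\eta(0))\otimes\hat{S}\lie{t}^*$, observes that the summands are free hence projective so the filtration splits, and thereby exhibits an \emph{explicit} isomorphism $H_T(M;\eta)\cong H(M^T;\eta(0))\otimes\hat{S}\lie{t}^*$; comparing with the formality isomorphism then gives the rank equality. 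Your approach is leaner in that it avoids the filtration/splitting argument entirely, at the cost of invoking the Hu--Uribe localization Theorem~\ref{local} for the torsion cokernel; the paper's approach is a bit longer but yields more, namely the concrete identification of $H_T(M;\eta)$ with the free module on $H(M^T;\eta(0))$. Your step two -- noting that compatibility (not merely invariance) forces $\eta^1\vert_{M^T}=0$ so that Example~\ref{trivialact} applies exactly -- is the same observation the paper uses implicitly, and it is stated correctly.
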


\begin{proof}
By equivariant formality $$ H_T(M ;\eta) \cong H(M;\eta(0)) \otimes \hat{S} \lie{t}^*$$ while by equivariant perfection of a generic component of the Morse map $$ gr ( H_T(M ;\eta)) \cong \oplus H(C_i ; \eta(0)) \otimes \hat{S} \lie{t}^*$$ if we ignore the $\Z_2$-grading. The summands $H(C_i ; \eta(0))  \otimes \hat{S} \lie{t}^*$ are free, hence projective over $\hat{S} \lie{t}^*$ so $$ H_T(M;\eta) \cong H( \cup_i C_i ; \eta(0)) \otimes \hat{S} \lie{t}^* = H( M^T; \eta(0))\otimes \hat{S} \lie{t}^*.$$
\end{proof}

We now turn our attention to the proof of the Kirwan surjectivity Theorem \ref{morsesurj}.  We will need the following proposition.

\begin{proposition}\label{immunology}
Let $M$ be a compact $T$-manifold with nondegenerate abstract moment map $\mu: M \rightarrow \lie{t}^*$ for which $0 \in \lie{t}^*$ is a regular value, and suppose that $M$ admits a $T$-invariant almost complex structure. Then we may choose a basis $\xi_1, ..., \xi_n$ of $\lie{t}$ such that

(1) each $\lie{t}_k := Span\{\xi_1,...,\xi_k\}$ exponentiates to a rank $k$ torus $T_k$,

(2) $0 \in \lie{t}_k^*$ is a regular value for the moment map $\mu_k = proj_{\lie{t}_k^*} \circ \mu$

(3) The restriction of $\mu^{\xi_{k+1}}$ to the submanifold $M_k = \mu_k^{-1}(0) \subset M$ is Morse-Bott with critical set equal to the points where $T_{k+1}$ acts with positive dimensional stabilizer.
\end{proposition}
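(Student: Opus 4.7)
The plan is to build the basis $\xi_1, \ldots, \xi_n$ one vector at a time by induction on $k$, at each stage choosing $\xi_{k+1}$ as a rational vector in $\lie{t}$ (so that $T_{k+1}$ is a genuine subtorus) which avoids a finite union of proper rational subspaces of $\lie{t}$ determined by the isotropy data on the previously constructed submanifold $M_k$. This is the ``reduction in stages'' recipe of Goldin \cite{Gol02} and Ginzburg--Guillemin--Karshon \cite{GGK02}, adapted from the symplectic setting to the abstract moment map setting.

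The key structural fact is the rank formula
\[ d\mu_p(T_pM) \;=\; \operatorname{Ann}(\lie{t}_p) \;\subset\; \lie{t}^*, \qquad p \in M,\]
where $\lie{t}_p$ denotes the Lie algebra of the $T$-stabilizer at $p$. This is a direct consequence of the defining property $\operatorname{Crit}(\mu^\xi)=\{\xi_M=0\}$ of a nondegenerate abstract moment map, dualized inside the finite-dimensional space $\lie{t}$. Projecting to $\lie{t}_k^*$ gives $\rank(d\mu_{k,p}) = k - \dim(\lie{t}_k \cap \lie{t}_p)$, so $0$ is a regular value of $\mu_k$ if and only if $T_k$ acts on $M_k := \mu_k^{-1}(0)$ with at most discrete stabilizers. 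The same identity, together with the description $T_p M_k = \ker d\mu_{k,p}$, shows that the critical set of the restricted function $\mu^{\xi_{k+1}}|_{M_k}$ coincides with $\{ p \in M_k : \lie{t}_p \cap \lie{t}_{k+1} \neq 0 \}$, which is exactly the locus of positive-dimensional $T_{k+1}$-stabilizer.

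For the inductive step I would assume that $\xi_1, \ldots, \xi_k$ have been chosen so that (1)--(3) hold up to level $k$; in particular $M_k$ is a compact submanifold on which $T_k$ acts locally freely. Compactness and the slice theorem imply that the collection of stabilizer Lie algebras $\{ \lie{t}_p : p \in M_k \}$ is finite, and each nonzero $\lie{t}_p$ in this finite list satisfies $\lie{t}_p \cap \lie{t}_k = 0$ and hence determines a proper rational subspace $\lie{t}_k + \lie{t}_p \subsetneq \lie{t}$. Choosing $\xi_{k+1}$ to be a rational vector lying outside $\lie{t}_k$ and outside this finite union then gives (1) from rationality, (2) at level $k+1$ from the rank formula, and the critical-set half of (3) from the calculation above.

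The remaining task, and the main obstacle, is verifying that $\mu^{\xi_{k+1}}|_{M_k}$ is genuinely Morse--Bott at each critical component. I would handle this by passing to the reduction $\bar M_k := M_k/T_k$, which is a compact smooth orbifold carrying an induced action of $T/T_k$, an induced nondegenerate abstract moment map $\bar\mu$, and an induced invariant almost complex structure descended from $M$. Since these structures are preserved under reduction, the component $\bar\mu^{\bar\xi_{k+1}}$ is automatically Morse--Bott by the defining property of a nondegenerate abstract moment map, and this pulls back along the (orbifold) principal $T_k$-bundle $M_k \to \bar M_k$ to yield the Morse--Bott property of $\mu^{\xi_{k+1}}|_{M_k}$. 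The delicate point throughout is to confirm that the abstract moment map axioms and the $T$-invariant almost complex structure really do descend through iterated reduction; this is the one place in the argument where the invariant ACS hypothesis in the statement of the proposition plays an essential role.
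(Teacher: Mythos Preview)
Your rank formula and identification of the critical set are correct, and the inductive scheme is the right shape, but there are two genuine gaps.

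First, in establishing (2), the claim that each nonzero $\lie{t}_p$ with $p \in M_k$ yields a \emph{proper} subspace $\lie{t}_k + \lie{t}_p \subsetneq \lie{t}$ is unjustified and in general false: nothing prevents $\dim\lie{t}_p = n-k$ (already at $k=0$ one has $M_0 = M$, and any $T$-fixed point has $\lie{t}_p = \lie{t}$). For such $p$ every choice of $\xi_{k+1}$ gives $\lie{t}_{k+1}\cap\lie{t}_p \neq 0$, and one must instead arrange $\mu^{\xi_{k+1}}(p) \neq 0$ so that $p \notin M_{k+1}$. This requires the \emph{values} of $\mu$, not just the isotropy data. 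The paper handles this by running the induction top-down rather than bottom-up: one shows (Lemma~\ref{pennyi}) that $T$ admits a codimension-one subtorus $H$ with $0$ still regular for $\mu_{\lie{h}}$, the key finiteness input being that the set of subspaces $\{\,\mathrm{span}(\mu(p)) + \lie{t}_p^{\perp} : p \in M\,\}$ is finite (Corollary~\ref{vasy}). Iterating gives the chain $T = T_n \supset \cdots \supset T_1$ directly.

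Second, and more seriously, your Morse--Bott argument via reduction is circular. The assertion that the reduced map $\bar\mu$ on $\bar M_k = M_k/T_k$ is a \emph{nondegenerate} abstract moment map is precisely the statement that every $\mu^{\xi}|_{M_k}$ is Morse--Bott, since $\bar\mu^{\bar\xi}$ is the $T_k$-quotient of $\mu^{\xi}|_{M_k}$; you cannot deduce the latter from the former. Nor does the invariant almost complex structure obviously descend to $\bar M_k$, absent a global symplectic form to mediate. The paper takes a different route and uses the ACS \emph{locally}: by a result in \cite{GGK02}, an invariant ACS produces, in a neighbourhood of each orbit, a $T$-invariant symplectic form for which $\mu$ is a genuine Hamiltonian moment map. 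One then invokes the Guillemin--Sternberg local normal form \cite{GS82}, which near a critical point $p \in M_k$ identifies $\mu^{\xi_{k+1}}|_{M_k}$ with a quadratic form $q$ on a $(T_{k+1})_p$-representation. This gives nondegeneracy of the Hessian directly, with no appeal to global reduction.
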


The proof of Proposition \ref{immunology} is postponed until Appendix \ref{nondegabs}.

Proposition \ref{immunology} allows us to factor the Kirwan map $H_T(M;\eta) \rightarrow H_T(\mu^{-1}(0))$ through the sequence of submanifolds determined by Proposition \ref{immunology} $$H_T(M) \rightarrow H_T(\mu^{-1}_1(0)) \rightarrow H_T(\mu^{-1}_2(0)) ...\rightarrow H_T(\mu^{-1}_n(0)) = H_T(\mu^{-1}(0)).$$ Our strategy to prove Theorem \ref{morsesurj} is to show that each map in this composition is surjective. We do this by applying the following lemma to the $T$-manifold $\mu^{-1}_k(0)$ with function $(\mu^{\xi_{k+1}})^2$, which completes the proof of Theorem \ref{morsesurj}.

\begin{lemma}\label{surj}
Let $X$ be compact smooth $T$-manifold with no orbits of dimension smaller than $d$, and let $N$ be the union of dimension $d$ orbits. Let $f: X \rightarrow \R$ be an $T$-invariant function such that $Crit(f) = N \cup f^{-1}(c_0)$, where $c_0$ is the minimum value of $f$. Suppose that $f$ is Morse-Bott except possibly at the minimum $f^{-1}(0)$. For $C=C_i$ a connected component of $N$, let $\lie{t}_C$ denote the infinitesimal stabilizer of $C$ and let $T_C = exp(\lie{t}_C)$ its torus. If  $\eta \in \Omega^3_{T}(X)$ is a $d_{T}$-closed form satisfying $$ \eta|_{C} \in \Omega_{T/T_C}(C)\otimes 1 \subset  \Omega_{T/T_C}(C)\otimes S \lie{t}_C^* \cong \Omega_{T}(C) ,$$ then the map induced by inclusion $$ H_{T}(X;\eta) \rightarrow H_{T}(f^{-1}(c_0);\eta)$$ is surjective.
\end{lemma}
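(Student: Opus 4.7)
The plan is to run Atiyah--Bott/Kirwan Morse theory on $f$ and show that $f$ is $\eta$-equivariantly perfect at every critical component lying in $N$; a retraction argument at the minimum will then yield surjectivity.

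I would first analyze the geometry of a connected component $C \subset N$. Since every orbit in $X$ has dimension at least $d$ and $C$ consists of dimension-$d$ orbits, the infinitesimal stabilizer $\lie{t}_C$ at a point of $C$ is a positive-dimensional subalgebra with associated subtorus $T_C \subset T$. Moreover, the connected component of the fixed set $X^{T_C}$ through any point of $C$ must coincide with $C$; otherwise, there would exist nearby points whose identity component of stabilizer strictly contains $T_C$, producing orbits of dimension $<d$ and contradicting the hypothesis on $X$. Consequently the normal bundle of $C$ in $X$, and in particular its negative subbundle $\nu^-_C$ for $f$, has trivial $T_C$-fixed locus. The weight decomposition of $\nu^-_C$ under $T_C$ then breaks it into a direct sum of real 2-planes, each carrying a canonical complex structure, which provides an orientation.

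Now the compatibility hypothesis $\eta|_C \in \Omega_{T/T_C}(C) \otimes 1$ is exactly what is needed to apply Lemma \ref{atbot} with $S = T_C$, $N=C$, and $\nu = \nu^-_C$: it gives that $Eul_T(\nu^-_C)$ is not a zero divisor in $H_T(C;\eta)$. By the self-perfecting principle Lemma \ref{astral}, $f$ is $\eta$-equivariantly perfect at every critical value above $c_0$, so the six-term sequence (\ref{rect}) collapses into the short exact sequence (\ref{exact-sequence}) for each such critical value. Iterating across the finitely many critical values above $c_0$ gives a surjection
\[ H_T(X;\eta) \twoheadrightarrow H_T(X_{c_0+\epsilon};\eta) \]
for any sufficiently small $\epsilon > 0$, where $X_t := f^{-1}((-\infty, t))$.

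To finish, I would identify $H_T(X_{c_0+\epsilon};\eta) \cong H_T(f^{-1}(c_0);\eta)$ via the inclusion. Since $c_0$ is the minimum value of $f$ and there are no critical points in the open set $f^{-1}((c_0, c_0+\epsilon))$, a $T$-equivariant gradient-like vector field on this set (obtained from any $T$-invariant Riemannian metric) produces a $T$-equivariant deformation retraction of $X_{c_0+\epsilon}$ onto $f^{-1}(c_0)$, inducing an isomorphism in twisted equivariant cohomology. This retraction is the most delicate step because $f$ is not assumed to be Morse-Bott at $f^{-1}(c_0)$; however, only the absence of critical points in the open sublevel interval $(c_0, c_0+\epsilon)$ is needed, and $T$-invariance of $f$ together with averaging over the compact group $T$ ensures the retraction can be chosen equivariant. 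Composing with the earlier surjection gives the desired map $H_T(X;\eta) \twoheadrightarrow H_T(f^{-1}(c_0);\eta)$.
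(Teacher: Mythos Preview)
Your proof is correct and follows essentially the same route as the paper: both apply Lemma~\ref{atbot} with $S=T_C$ to conclude via Lemma~\ref{astral} that $f$ is $\eta$-equivariantly perfect above $c_0$, then invoke the retraction of $X_{c_0+\epsilon}$ onto $f^{-1}(c_0)$. You actually fill in details the paper omits, namely the verification that $(\nu^-_C)^{T_C}$ is the zero section (hence Lemma~\ref{atbot} applies and $\nu^-_C$ is orientable) and the justification of the retraction step; one small wording issue is that the nearby points you consider need only have stabilizer \emph{containing} $T_C$, which already forces orbit dimension $\le d$, hence $=d$, hence membership in $N$ and (by connectedness) in $C$.
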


\begin{proof}
The map $H_T( X_{c_0 + \epsilon}; \eta) \rightarrow H_T( f^{-1}(c_0);\eta)$ is an isomorphism, hence surjective.

Now suppose inductively that $H_T(X_{c_{i}-\epsilon};\eta) \cong H_T( X_{c_{(i-1)} + \epsilon};\eta) \rightarrow H_T( f^{-1}(c_0);\eta)$ is surjective for some $i$. We must show that $H_T(X_{c_{i}+\epsilon};\eta) \rightarrow H_T(X_{c_{i}-\epsilon};\eta)$ is surjective.
By Lemma \ref{astral}, it will suffice to show that $Eul_T(\nu_i)$ is not a zero divisor for $H_T( C_i ; \eta)$, where $C_i$ is a connected component of $N$. This follows from Lemma \ref{atbot} using $S = T_{C_i}$.
\end{proof}

\begin{proof}[Proof of Theorem \ref{morsesurj}]

\end{proof}

\section{Generalized complex geometry}\label{g.c.geometry.review}
Let $V$ be an $n$ dimensional vector space. There is a natural bi-linear pairing
of signature $(n, n)$ on $V\oplus V^*$ which is defined by
                            \[    \langle X +\alpha , Y +  \beta \rangle = \dfrac{1}{2}(\beta (X) +
                            \alpha(Y)).\]
A \textbf{generalized complex structure} on a vector space $V$ is an
orthogonal linear map $\mathcal{J}: V\oplus V^* \rightarrow V\oplus
V^*$ such that $\mathcal{J}^2=-1$. Let $L \subset V_{\C}\oplus
V^*_{\C}$ be the $\sqrt{-1}$ eigenspace of the generalized complex
structure $\mathcal{J}$. Then $L$ is maximal isotropic and $L \cap
\overline{L} = \{0\}$. Conversely, given a maximal isotropic
$L\subset V_{\C}\oplus V^*_{\C}$  so that  $L \cap \overline{L} =
\{0\}$, there exists an unique generalized complex structure
$\mathcal{J}$ whose $\sqrt{-1}$ eigenspace is exactly $L$.




Let $M$ be a manifold. A \textbf{generalized almost complex structure} on a manifold $M$ is
an orthogonal bundle map $\mathcal{J}:TM\oplus T^*M \rightarrow
TM\oplus T^*M$ such that for any $x\in M$, $\mathcal{J}_x$ is a generalized complex structure on the vector space $T_xM$.

Given a closed three form $H \in \Omega^3(M)$, an $H$-\textbf{twisted generalized complex structure} $\J$  is
a generalized almost complex structure such that the sections of the $\sqrt{-1}$ eigenbundle of $\J$ are closed
under the $\eta$-\textbf{twisted Courant
bracket}, i.e., the bracket defined by the formula
\[
[X+\xi,Y+\zeta]=[X,Y]+L_X\zeta-L_Y\xi-\dfrac{1}{2}d\left(\zeta(X)-\xi(Y)\right)+\iota_Y\iota_X H.\]









A generalized almost K\"ahler structure is a pair of two
commuting generalized almost complex structures $\J_1$,$\J_2$
such that $\langle -\J_1\J_2\xi,\xi\rangle
>0$ for any $\xi \neq 0 \in C^{\infty}(T_{\C}M \oplus  T^*_{\C}M)$,
where $\langle\cdot,\cdot \rangle$ is the canonical pairing on
$T_{\C}M \oplus  T^*_{\C}M$. A generalized almost K\"ahler structure
$(\J_1,\J_2)$ is called an $H$-twisted generalized K\"ahler structure if both $\J_1$ and $\J_2$ are $H$-twisted generalized complex structures.
Given a generalized almost K\"ahler structure $(\J_1,\J_2)$, define
$\mathcal{G}(A,B):=\langle -\J_1\J_2A,B \rangle$, $A,B \in C^{\infty}(TM\oplus T^*M)$. Then $\mathcal{G}$ is a Riemannian metric on
$TM\oplus T^*M$, and its restriction to $TM$ defines a Riemannian metric $g$ on $M$.
Let $G=-\J_1\J_2$. Since $G^2=id$, $TM\oplus T^*M=C_+\oplus C_-$, where $C_{\pm}$ is the $\pm 1$-eigen-bundle of $G$.
Let $\pi: TM\oplus T^*M \rightarrow TM$ be the projection map. Then \[ \pi\mid_{C_{\pm}}: C_{\pm} \rightarrow TM \] is an isomorphism.
Since $\J_1$ commutes with $G$, $C_{\pm}$ is invariant under $\J_1$. By projecting from $C_{\pm}$, $\J_1$ induces two almost complex structure
$I_+$ and $I_-$ on $TM$ which are compatible with the Riemannian metric $g$.

Conversely, if there are two almost complex structures $I_+$ and $I_-$ which are compatible with a Riemannian metric $g$ on $M$, then
\begin{equation} \label{g.k.pair} \J_{1/2}=\dfrac{1}{2}\left(\begin{matrix} &1 & 0\\ & b &
1\end{matrix}\right) \left(\begin{matrix}& I_+\pm I_- &
-(\omega_+^{-1}\mp \omega_-^{-1})\\ &\omega_+\mp \omega_- &
-(I_+^*\pm I_-^*)\end{matrix} \right)\left(\begin{matrix} &1 & 0\\
&- b & 1\end{matrix}\right)
 \end{equation}
is a generalized almost K\"ahler structure, where $\omega_{\pm}=gI_{\pm}$ are the fundamental $2$-forms of the
Hermitian structures $(g,I_{\pm})$, and $b$ is a two form. The following remarkable result is due to Gualtieri.

\begin{theorem} \label{g.k.approach-to-bi-hermitian} (\cite{Gua03}) An $H$-twisted \textbf{generalized K\"ahler
structure} is equivalent to a triple $(g,I_+,I_-)$ consisting of a
Riemannian metric $g$ and two integrable almost complex structure
compatible with $g$, satisfying the integrability conditions:
\[d_+^c\omega_++d^c_-\omega_-=0, \,\, H+db=d_+^c\omega_+,\,\, dd^c_{\pm}\omega_{\pm}=0,\]
where $\omega_{\pm}=gI_{\pm}$, $d^c_{\pm}$ are the
$i(\overline{\partial}-\partial)$ operator associated to the complex
structure $I_{\pm}$, and $b$ is a two form. In particular, a triple
$(g,I_+,I_-)$ satisfying the above assumption defines a generalized
K\"ahler pair $\J_1, \J_2$ by the formula (\ref{g.k.pair}).

 \end{theorem}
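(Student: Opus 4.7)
The plan is to prove Theorem \ref{g.k.approach-to-bi-hermitian} by making the correspondence between a generalized Kähler pair $(\J_1,\J_2)$ and a bi-Hermitian triple $(g,I_+,I_-)$ explicit, and then translating the integrability conditions on each side into the other. The excerpt has already constructed the forward passage: from $(\J_1,\J_2)$ we form $G=-\J_1\J_2$, obtain the $\pm1$-eigensplitting $TM\oplus T^*M=C_+\oplus C_-$, verify $\pi|_{C_\pm}$ is an isomorphism using positive-definiteness of $\mathcal G$, and then transport $\J_1|_{C_\pm}$ via $\pi$ to almost complex structures $I_\pm$ compatible with $g$. Conversely, formula (\ref{g.k.pair}) manifestly defines a generalized almost Kähler pair (its square, commutation and signature properties are matrix computations). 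So the content of the theorem is the equivalence of the \emph{integrability} conditions on the two sides.

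First, I would give a normal form for $C_\pm$. Using the non-degenerate pairing, each Lagrangian complement to $T^*M$ that is positive (resp.\ negative) definite with respect to $\mathcal G$ is the graph of a map $X\mapsto X+(b\pm g)X$ for a unique 2-form $b$ (this is standard; one writes the graph as $\{X+(b+g)X\}$ for $C_+$ and $\{X+(b-g)X\}$ for $C_-$, and checks that $b$ is the same for both because $C_+\oplus C_-=TM\oplus T^*M$). Substituting this into the formula $\J_1|_{C_\pm}=I_\pm$ and extending orthogonally yields exactly (\ref{g.k.pair}) with the same auxiliary 2-form $b$, which fixes the dictionary.

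Next, I would compute the $+\sqrt{-1}$-eigenbundles. If $\ell_\pm\subset T_{\C}M$ denotes the $(1,0)$-bundle of $I_\pm$, then under the isomorphism $\pi|_{C_\pm}$ the $+\sqrt{-1}$-eigenbundle $L_1$ of $\J_1$ is $e^{b}\bigl(\ell_+\oplus\ell_-\bigr)$ (a graph in $T_{\C}M\oplus T^*_{\C}M$), and the $+\sqrt{-1}$-eigenbundle $L_2$ of $\J_2$ is $e^{b}\bigl(\ell_+\oplus\overline{\ell_-}\bigr)$, where $e^b$ denotes the $B$-field transform $X+\xi\mapsto X+\xi+\iota_X b$. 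The conjugation-invariance property of the twisted Courant bracket under $B$-field transforms says that $L_j$ is closed under the $H$-twisted Courant bracket if and only if $e^{-b}L_j$ is closed under the $(H+db)$-twisted Courant bracket. So the integrability reduces to asking that $\ell_+\oplus\ell_-$ and $\ell_+\oplus\overline{\ell_-}$ both be involutive for the $(H+db)$-twisted bracket.

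The core computation — and the main obstacle — is then a direct expansion of the twisted Courant bracket on sections of $\ell_\pm$. Involutivity of each $\ell_\pm$ individually is equivalent to integrability of $I_\pm$ as an almost complex structure; this is a standard identification once one writes the Courant bracket in holomorphic/antiholomorphic frames and observes that the cross terms involving the twisting three-form vanish automatically on $\ell_+\otimes\ell_+$ or $\ell_-\otimes\ell_-$. The genuinely new content comes from the mixed brackets between $\ell_+$ and $\ell_-$ (and $\ell_+$ and $\overline{\ell_-}$): these produce terms involving $d\omega_\pm$, $I_\pm^*$ and $H+db$, which one reorganizes using the identities $d^c_\pm=\sqrt{-1}(\bar\partial_\pm-\partial_\pm)$ and $\omega_\pm=gI_\pm$. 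Matching $(3,0)+(2,1)$-type components in the bi-grading induced by $I_+$ (respectively $I_-$) collapses the mixed-bracket condition for $L_1$ to the single equation $H+db=d^c_+\omega_+$, while the analogous condition for $L_2$, combined with the previous one, forces $d^c_+\omega_++d^c_-\omega_-=0$ and hence $H+db=-d^c_-\omega_-$. Finally, applying $d$ to either of these last equations and using $d(H+db)=0$ yields $dd^c_\pm\omega_\pm=0$.

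Thus the full set of integrability conditions on $(\J_1,\J_2)$ is equivalent to: $I_+$ and $I_-$ integrable, $d^c_+\omega_++d^c_-\omega_-=0$, $H+db=d^c_+\omega_+$, and $dd^c_\pm\omega_\pm=0$. Conversely, given a bi-Hermitian triple satisfying these conditions, one reverses the computation to verify that the $\J_{1,2}$ produced by (\ref{g.k.pair}) have involutive $+\sqrt{-1}$-eigenbundles for the $H$-twisted Courant bracket, completing the equivalence. The bookkeeping in the Courant-bracket expansion is where essentially all the work lies; once the bi-graded components are separated, everything else is formal.
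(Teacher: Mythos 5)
The paper does not prove this theorem: it is stated as a citation to Gualtieri's thesis \cite{Gua03}, and no argument is supplied. So there is no in-paper proof to compare against. That said, your sketch is a faithful outline of Gualtieri's own argument, and the overall structure is right: parametrize $C_\pm$ as graphs $\{X + (b\pm g)X\}$, read off $(g,I_\pm,b)$, reduce by the $B$-field transform $e^{-b}$ to the $(H+db)$-twisted bracket, identify $L_1 \leftrightarrow \ell_+ \oplus \ell_-$ and $L_2 \leftrightarrow \ell_+ \oplus \overline{\ell_-}$ (up to the graph isomorphism), separate diagonal brackets (giving Newlander--Nirenberg integrability of $I_\pm$) from mixed brackets (giving the $d^c_\pm \omega_\pm$ conditions), and observe that $dd^c_\pm\omega_\pm = 0$ is forced by $dH=0$ together with $H + db = d^c_+\omega_+ = -d^c_-\omega_-$.

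Two small cautions. First, $C_\pm$ are not Lagrangian; they are the $\pm$-definite subbundles for the canonical pairing, transverse to $T^*M$, and it is that definiteness (not isotropy) which forces the graph map to have symmetric part $\pm g$ and a common antisymmetric part $b$. Second, and more substantively: you defer what you yourself call ``the core computation'' — the expansion of the twisted Courant bracket on mixed sections of $\ell_+$ and $\ell_-$ (resp.\ $\overline{\ell_-}$) and the sorting of the resulting terms by $I_\pm$-bidegree. This is precisely where the equivalence lives, and a complete proof would need to carry it out (it occupies several pages in Gualtieri's thesis). As a roadmap your sketch is accurate; as a proof it stops at the step that constitutes the actual content.
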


We close this section with a quick review of generalized complex submanifolds as introduced in \cite{BB03} (See also \cite{BS06}). Although \cite{BB03} only defined
generalized complex submanifolds for untwisted generalized complex structures, the definition given there extends naturally to the twisted case as well.

Let $W$ be a submanifold of an $\eta$-twisted generalized complex manifold $(M, \J)$, let $L\subset T_{\C}M\oplus T^*_{\C}M$ be the $\sqrt{-1}$-eigenbundle of $\J$, and let  $i:W \rightarrow M$ be the inclusion map. At each point $x \in N$ set

\[L_{W,x}=\{X+(\xi\mid_{T_{\C}W}) : X+\xi \in L\cap (T_{\C,x}W\oplus T^*_{\C,x}M)   \} . \]

This defines a maximally isotropic distribution of $T_{\C}W\oplus T_{\C}^*W$ whose sections are closed under the $i^*H$-twisted Courant bracket.
If $L_W$ is a sub-bundle of $T_{\C}W\oplus T^*_{\C}W$ and if $L_W \cap \overline{L}_W=0$, then $W$ is said to be a generalized complex submanifold. \footnote{ It is noteworthy that the sufficient and necessary conditions for $W$ to be a generalized complex submanifolds have been found in \cite{BS06}.} It is clear from the definition that if $W$ is a generalized complex submanifold then there exists a unique $i^*H$-twisted generalized complex structure $\J_W$ on $W$ whose $\sqrt{-1}$-eigenbundle is exactly $L_W$.

It is well-known that the fixed point submanifold of a symplectic torus action on a symplectic manifold is a symplectic submanifold.
\cite{Lin06} extends this fact to generalized complex manifolds.

\begin{proposition}\label{fixed-points-submfold} Suppose the action of a torus $T$ on an $H$-twisted generalized complex manifold $(M,\J)$ preserves the generalized complex structure $\J$. And suppose $Z$ is a connected component of the fixed point set. Then $Z$ is a generalized complex submanifold of $M$. Let $i: Z\rightarrow M$ be the inclusion map. Then $Z$ carries a $i^*H$-twisted generalized complex structure. \end{proposition}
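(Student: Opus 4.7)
The plan is to exploit $T$-equivariance and averaging to identify the induced eigenbundle $L_Z$ at each fixed point with the $T$-invariant part of $L$. Fix $x \in Z$ and write $L_x \subset T_{\C,x}M \oplus T^*_{\C,x}M$ for the $\sqrt{-1}$-eigenspace of $\J_x$. The torus $T$ acts linearly on $T_{\C,x}M$ with invariant subspace exactly $T_{\C,x}Z$, and dually on $T^*_{\C,x}M$. Restriction of covectors to $T_{\C,x}Z$ annihilates every non-trivial isotypic summand of $T^*_{\C,x}M$ (a Schur-type observation via averaging), and sends the invariant subspace $(T^*_{\C,x}M)^T$ isomorphically onto $T^*_{\C,x}Z$. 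Under this identification, $(T_{\C,x}M \oplus T^*_{\C,x}M)^T \cong T_{\C,x}Z \oplus T^*_{\C,x}Z$, and the canonical bilinear pairing of signature $(n,n)$ restricts to the natural pairing on the smaller space.

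I would then verify the identification $L_{Z,x} = L_x^T := L_x \cap (T_{\C,x}M \oplus T^*_{\C,x}M)^T$. Given $X + \xi \in L_x$ with $X \in T_{\C,x}Z$, averaging over $T$ preserves $L_x$ (since $L$ is a $T$-invariant sub-bundle and hence closed under averaging), leaves $X$ fixed, and replaces $\xi$ by its invariant component $\xi^T$; under the identification above, $\xi^T$ corresponds precisely to $\xi|_{T_{\C,x}Z}$, so $X + \xi|_{T_{\C,x}Z} \in L_x^T$. The reverse inclusion $L_x^T \subset L_{Z,x}$ is immediate from the definition.

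With the identification in hand, the rest is linear algebra. Since $\J$ commutes with the $T$-action and is orthogonal with square $-1$, it preserves the $T$-invariant summand and restricts there to a generalized complex structure on $T_xZ$ whose $\sqrt{-1}$-eigenspace is $L_x^T = L_{Z,x}$. Consequently $L_{Z,x}$ is maximal isotropic in $T_{\C,x}Z \oplus T^*_{\C,x}Z$ and satisfies $L_{Z,x} \cap \overline{L_{Z,x}} = 0$; its rank equals $\dim_{\R} Z$, independent of $x$, so $L_Z$ is a smooth sub-bundle. Closure of sections of $L_Z$ under the $i^*H$-twisted Courant bracket is the general property recorded just before the proposition statement. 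Hence $Z$ is a generalized complex submanifold of $M$, and the unique $i^*H$-twisted generalized complex structure $\J_Z$ on $Z$ with $\sqrt{-1}$-eigenbundle $L_Z$ realizes the claim. The only real obstacle is the identification $L_{Z,x} = L_x^T$, which is where $T$-averaging is essential; once this is established, every remaining assertion reduces to restricting $\J$ to its $T$-invariant summand and invoking the basic correspondence between generalized complex structures and maximal isotropic subspaces.
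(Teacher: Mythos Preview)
The paper does not supply its own proof of this proposition; it attributes the result to \cite{Lin06} and merely records the statement. Your argument is correct and is essentially the natural one: at a fixed point $x$ the $T$-action on $T_xM \oplus T_x^*M$ commutes with $\J_x$, so $\J_x$ restricts to a linear generalized complex structure on the invariant summand $(T_xM \oplus T_x^*M)^T \cong T_xZ \oplus T_x^*Z$, and the averaging argument cleanly identifies $L_{Z,x}$ with $L_x^T$, from which maximal isotropy and $L_{Z,x}\cap\overline{L_{Z,x}}=0$ are immediate.

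One small expositional point: the sentence ``its rank equals $\dim_{\R} Z$, independent of $x$, so $L_Z$ is a smooth sub-bundle'' is not quite self-contained, since constant rank of a pointwise-defined distribution does not by itself guarantee smoothness. What actually gives smoothness is your identification $L_Z \cong (L|_Z)^T$: the $T$-invariant part of a $T$-equivariant vector bundle over a trivial $T$-space is a smooth direct summand (isotypic decomposition). You are implicitly using this, and it would be worth saying so explicitly. With that clarification the proof is complete.
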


 \section{Generalized moment maps}\label{g-moment-map}
First we recall the definition of Hamiltonian actions on
$H$-twisted generalized complex manifolds given in \cite{LT05}.

\begin{definition}\footnote{Indeed, Condition (b) was not imposed
in \cite[Definition A.2.]{LT05}. However, in order to make the quotient construction work,
Tolman and the author made it clear in \cite[Prop. A.7, A.10]{LT05} that $H+\alpha$ must be equivariantly closed in the usual equivariant
Cartan model.}
\cite{LT05}) \label{deftmm}
Let a compact Lie group $G$ with Lie algebra $\g$ act on a manifold
$M$, preserving an $H$-twisted generalized complex structure
$\mathcal{J}$, where $H \in \Omega^3(M)^G$ is closed.
The action of $G$ is said to be Hamiltonian if there exists a smooth
equivariant function $\mu:M \rightarrow \frak{g}^*$,  called the
{\bf generalized moment map}, and a 1-form $\alpha \in
\Omega^1(M,\g^*)$, called the {\bf moment one form},  so that
\begin{itemize}
\item [a)]
$\J d\mu^{\xi}=-\xi_M-\alpha^{\xi}$ for all $\xi \in \g$, where $\xi_M$ denotes the
induced vector field.
\item [b)] $H+\alpha$ is an equivariantly closed three form in the
usual Cartan Model.
\end{itemize}
\end{definition}

\begin{remark} \label{poisson-Hamiltonian}An $H$-twisted generalized complex structure $\J: TM\oplus T^*M \rightarrow TM\oplus T^*M$ induces by restriction and
projection a map $\beta: T^*M \rightarrow TM$ which is a real Poisson bi-vector, see for instance \cite{Gua03} and \cite{BS06}. If the action of a
compact Lie group $G$ on a generalized complex manifold $(M, \J)$ is Hamiltonian with a generalized moment map $\mu \rightarrow \frak{g}^*$, then a straightforward calculation shows
\[ -\beta (d\mu^{\xi}) = \xi_M,\] where $\xi_M$ is the vector field on $M$ induced by $\xi \in \frak{g}$. This shows clearly that the action of $G$ is Hamiltonian with respect to the Poisson bi-vector $\beta$.

\end{remark}

Let a compact Lie group $G$ act on a twisted generalized complex
manifold $(M,\J)$ with generalized moment map $\mu$. Let $\cO_a$ be
the co-adjoint orbit through $a \in \g^*$. If $G$ acts freely on
$\mu^{-1}(\cO_a)$, then $\cO_a$ consists of regular values and $M_a
= \mu^{-1}(\cO_a)/G$ is a manifold, which is called the {\bf
generalized complex quotient}. The following  two results were
proved in \cite{LT05}.

\begin{lemma}\label{twist three form}
Let a compact Lie group $G$ act freely on a manifold $M$. Let $H$ be
an invariant closed three form and let $\alpha$ be an equivariant
mapping from $\g$ to $\Omega^1(M)$.  Fix a connection $\theta \in
\Omega(M,\g^*)$. Then if $H + \alpha \in \Omega^3_G(M)$ is
equivariantly closed, there exists a natural form $\Gamma \in
\Omega^2(M)^G$ so that $\iota_{\xi_M} \Gamma = \alpha^\xi$. Thus $H
+ \alpha + d_G \Gamma \in \Omega^3(M)^G \subset \Omega_G^3(M)$ is
closed and basic and so descends to a closed form $\widetilde{H} \in
\Omega^3(M/G)$ so that $[\widetilde{H}]$ is the image of $[H  +
\alpha]$ under the Kirwan map.
\end{lemma}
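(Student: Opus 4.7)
The plan is to construct $\Gamma$ explicitly from the connection $\theta$, verify that its contraction with fundamental vector fields recovers $\alpha$, and then observe that the equivariantly corrected form $H+\alpha+d_G\Gamma$ becomes horizontal, hence basic, so descends to $M/G$.

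Concretely, let $\{e_a\}$ be a basis of $\lie{g}$ and write $\theta=\sum_a \theta^a\otimes e_a$ for the connection (so $\iota_{\xi_M}\theta^a=\xi^a$), and set $\alpha^a:=\alpha^{e_a}\in\Omega^1(M)$. I would define
\[
\Gamma \;=\; -\sum_a \alpha^{a}\wedge\theta^a \;-\;\tfrac12\sum_{a,b}\bigl(\iota_{(e_b)_M}\alpha^{a}\bigr)\,\theta^a\wedge\theta^b.
\]
A direct calculation, using the elementary fact $\iota_{\xi_M}(\omega\wedge\eta)=(\iota_{\xi_M}\omega)\wedge\eta+(-1)^{\deg\omega}\omega\wedge\iota_{\xi_M}\eta$, together with the polarization identity $\iota_{\xi_M}\alpha^\eta=-\iota_{\eta_M}\alpha^\xi$ (which comes from the degree-$1$ piece of equivariant closedness of $H+\alpha$, namely $\iota_{\xi_M}\alpha^\xi=0$), then shows $\iota_{\xi_M}\Gamma=\alpha^\xi$. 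Invariance of $\Gamma$ is the place where a choice is required: I would either average the connection to arrange $G$-invariance of $\theta$ (standard for compact $G$, and automatic for a torus $T$), or note that the correction terms absorb the deviation from invariance so that $\Gamma\in\Omega^2(M)^G$.

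With $\iota_{\xi_M}\Gamma=\alpha^\xi$ in hand, the Cartan-model value of $H+\alpha+d_G\Gamma$ at $\xi\in\lie{g}$ is
\[
H+\alpha^\xi+d\Gamma-\iota_{\xi_M}\Gamma \;=\; H+d\Gamma,
\]
which is independent of $\xi$, i.e.\ lies in $\Omega^3(M)^G\subset\Omega^3_G(M)$. Horizontality follows from $\iota_{\xi_M}H=d\alpha^\xi$ (the degree-$2$ piece of $d_G(H+\alpha)=0$) combined with $\iota_{\xi_M}d\Gamma=L_{\xi_M}\Gamma-d\iota_{\xi_M}\Gamma=-d\alpha^\xi$, giving $\iota_{\xi_M}(H+d\Gamma)=0$. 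So $H+d\Gamma$ is basic, descending to a form $\widetilde H\in\Omega^3(M/G)$, and it is closed since $d(H+d\Gamma)=dH=0$.

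Finally, because the Kirwan isomorphism $H^*_G(M)\cong H^*(M/G)$ is implemented precisely by descent of basic representatives, and $[H+\alpha]=[H+\alpha+d_G\Gamma]=[H+d\Gamma]$ in $H^*_G(M)$, the image of $[H+\alpha]$ under the Kirwan map is $[\widetilde H]$. The only genuine obstacle is the bookkeeping around the non-invariance of an arbitrary principal connection under a non-abelian $G$; I expect this to be handled either by passing to an invariant connection (available by averaging over compact $G$) or, more invariantly, by viewing the construction of $\Gamma$ as the low-degree part of a Mathai--Quillen-type horizontal projection, in which case $G$-invariance is automatic.
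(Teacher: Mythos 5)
Your construction is correct and follows the natural route; note that the paper you were given does not itself prove this lemma but cites it to Lin--Tolman, so there is no in-text proof to compare against. All the essential steps check out: the degree decomposition of $d_G(H+\alpha)=0$ gives $\iota_{\xi_M}\alpha^\xi=0$ and $\iota_{\xi_M}H=d\alpha^\xi$; polarization yields $\iota_{\xi_M}\alpha^\eta=-\iota_{\eta_M}\alpha^\xi$; your $\Gamma$ with the coefficient $\tfrac12$ on the $\theta\wedge\theta$ term satisfies $\iota_{\xi_M}\Gamma=\alpha^\xi$ (I verified the index gymnastics); and the Cartan-formula computation $\iota_{\xi_M}(H+d\Gamma)=d\alpha^\xi+L_{\xi_M}\Gamma-d\iota_{\xi_M}\Gamma=0$ gives basicness once $\Gamma$ is invariant, so $H+d\Gamma$ descends and represents the image of $[H+\alpha]$ under the Kirwan isomorphism.

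One point should be tightened. Your fallback remark that ``the correction terms absorb the deviation from invariance'' of a non-equivariant $\theta$ is not right and would not rescue the argument: if $g^*\theta\neq Ad_{g^{-1}}\theta$, the explicit formula for $\Gamma$ fails to be $G$-invariant, $L_{\xi_M}\Gamma\neq 0$, and the horizontality computation collapses. The correct statement is simply that a principal connection (which is, by definition, $G$-equivariant as well as satisfying $\iota_{\xi_M}\theta=\xi$) always exists on the principal $G$-bundle $M\to M/G$ for compact $G$, and with such a $\theta$ invariance of $\Gamma$ is automatic: the first term is the $\g^*$--$\g$ pairing of the coadjoint-equivariant tensor $\alpha$ with the adjoint-equivariant $\theta$, and the second is the pairing of the invariant antisymmetric tensor $\beta(\xi,\eta):=\iota_{\xi_M}\alpha^\eta$ with $\theta\wedge\theta$. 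The Mathai--Quillen remark at the end is the right intuition; I would just delete the ``non-invariant connection'' alternative, as it is a dead end rather than a genuine option.
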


\begin{proposition} \label{Twisted Complex Reduction}
Assume there is a Hamiltonian action of a compact Lie group $G$ on
an $H$-twisted generalized complex manifold $(M,\mathcal{J})$ with
generalized moment map $\mu: M \to \g^*$ and moment one-form $\alpha
\in \Omega^1(M,\g^*)$. Let $\cO_a$ be a co-adjoint orbit through $a
\in \g^*$ so that $G$ acts freely on $\mu^{-1}(\cO_a)$. Given a
connection on $\mu^{-1}(\cO_a)$, the generalized complex quotient
$M_a$ inherits an $\widetilde{H}$-twisted generalized complex
structure $\widetilde{\J}$, where $\widetilde{H}$ is defined as in
the Lemma \ref{twist three form}. Up to $B$-transform,
$\widetilde{J}$ is independent of the choice of connection.
\end{proposition}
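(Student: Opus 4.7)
The plan is to construct $\tilde{\J}$ by identifying its $+i$-eigenbundle $\tilde{L} \subset T_\C M_a \oplus T_\C^* M_a$ pointwise from the $+i$-eigenbundle $L$ of $\J$. First I would record the crucial consequence of the Hamiltonian condition: combining $\J(d\mu^\xi) = -\xi_M - \alpha^\xi$ with $\J^2 = -1$ gives $\J(\xi_M + \alpha^\xi) = d\mu^\xi$, and hence
\[
\xi_M + \alpha^\xi - i\, d\mu^\xi \in L \quad \text{for all } \xi \in \g.
\]
These elements span the complexified infinitesimal orbit directions sitting inside $L$, and they will be exactly the directions to quotient out in the reduction.

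Second I would use the 2-form $\Gamma$ of Lemma \ref{twist three form} (applied to the free action on $Z := \mu^{-1}(\mathcal{O}_a)$ using the chosen connection) to $B$-transform $\J|_Z$. Since $\iota_{\xi_M}\Gamma = \alpha^\xi$, the $B$-transform by $-\Gamma$ converts the generator above to $\xi_M - i\,d\mu^\xi$, absorbing the moment one-form, while the twist becomes $H + \alpha + d_G \Gamma$, which is basic and descends to $\tilde H$ by Lemma \ref{twist three form}. This reduces matters to the case $\alpha = 0$ for a $G$-invariant, $\tilde H$-twisted structure on $Z$, a situation where the framework of generalized complex reduction for coisotropic subbundles applies.

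Third, at each $\bar p = \pi(p) \in M_a$, I would consider the coisotropic subspace $K_p := T_\C Z \oplus \mathrm{Ann}(\g_{M,\C}) \subset T_\C M|_Z \oplus T_\C^* M|_Z$, whose null space for the canonical pairing is $N_p := \g_{M,\C} \oplus \mathrm{Ann}(T_\C Z)$, spanned by the orbit tangents $\xi_M$ together with the covectors $d\mu^\xi$. The connection $\theta$ provides an identification $K_p/N_p \cong T_\C M_a \oplus T_\C^* M_a$, and I would set
\[
\tilde L_{\bar p} := (L_p \cap K_p + N_p)/N_p.
\]
Maximal isotropy follows from a dimension count, using that the $\dim G$ independent orbit generators $\xi_M - i\,d\mu^\xi$ lie in $L \cap K_p$ and account for the codimension contributed by $N_p$. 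The nondegeneracy $\tilde L \cap \overline{\tilde L} = 0$ follows from $L \cap \bar L = 0$ on $M$, since any element witnessing a failure on $M_a$ would lift to an element of $L \cap \bar L$ modulo orbit directions.

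The main obstacle is the integrability of $\tilde L$ with respect to the $\tilde H$-twisted Courant bracket. This requires showing that $G$-invariant sections of $L \cap K$ have Courant bracket again in $L \cap K$ modulo $N$, and crucially uses the equivariant closedness of $H + \alpha$ to match the Courant-bracket anomaly against the descent formula for $\tilde H$ from Lemma \ref{twist three form}. For the independence-of-connection claim, observe that two connections $\theta, \theta'$ differ by a basic $\g$-valued 1-form $\rho$, and correspondingly the 2-forms $\Gamma, \Gamma'$ differ by a form whose push-down is a basic 2-form $b \in \Omega^2(M_a)$; this $b$ implements the $B$-transform relating the two reduced generalized complex structures, completing the proof.
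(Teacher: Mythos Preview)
The paper does not give a proof of this proposition; as the sentence preceding Lemma~\ref{twist three form} indicates, both that lemma and this proposition are quoted from \cite{LT05}. So there is no proof in the present paper to compare against.

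Your outline is essentially the argument of \cite{LT05}: the orbit generators $\xi_M + \alpha^\xi - i\,d\mu^\xi$ lie in $L$; a $B$-transform by $\pm\Gamma$ absorbs the moment one-form and replaces the twist on $Z$ by the basic form $H+d\Gamma$ that descends to $\tilde H$; and the reduced structure is obtained by coisotropic (Dirac-type) reduction along $K = T_\C Z \oplus \mathrm{Ann}(\g_{M,\C})$ with null space $N = \g_{M,\C} \oplus \mathrm{Ann}(T_\C Z)$. The linear algebra in your step~3 is set up correctly, and the independence-of-connection observation in step~5 is the right one. The one substantive gap is step~4: integrability of $\tilde L$ under the $\tilde H$-twisted Courant bracket is asserted rather than shown, and this is where the work of the proof actually sits---one must verify that the twisted bracket of $G$-invariant sections of $L\cap K$ lands back in $L\cap K$ modulo $N$, and that the bracket induced on the quotient is precisely the $\tilde H$-twisted one. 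You should also double-check the sign of your $B$-transform against whatever convention you adopt for how $e^B$ shifts the twisting three-form, so that the same sign simultaneously kills $\alpha^\xi$ and produces $H+d\Gamma$.
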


\subsection{Nitta's theorem and compatibility}

The rest of this section is devoted to the proof of Proposition \ref{moment-map-Morse-Bott}, which indeed has already been established by Nitta in \cite{NY07}. However, since Proposition \ref{moment-map-Morse-Bott} is central to our paper and since we believe more details are needed in Nitta's argument to make it more accessible, we will present a self-contained detailed proof in our paper. The essential step in the proof of Proposition \ref{moment-map-Morse-Bott} is the non-trivial observation that the restriction of $\alpha^{\xi}$ to the fixed point set $F^{\xi}$ vanishes. To prove it in full generality, we are going to use the maximum principle of pseudo-holomorphic functions on almost complex manifolds, as advocated in \cite[Prop. 3.1]{NY07}. Note that our proof differs slightly from the one given in \cite{NY07}. For instance, our proof does not involve the use of a
Levi-Civita connection, and we apply Proposition \ref{fixed-points-submfold} in an essential way. We would also like to mention that when the
generalized complex manifolds have constant types, one can construct more elementary proofs using the Darboux theorem of generalized complex structures \cite{Gua03}.

\begin{lemma}\label{trivial-Ham-action} Suppose the trivial action of a torus $T$ on a compact $H$-twisted generalized complex manifold $(M,\J)$ is Hamiltonian with a generalized moment map $\mu$ and a moment one form $\alpha$. Then $d\mu^{\xi}=\alpha^{\xi}=0$ for all $\xi \in \frak{t}$.

\end{lemma}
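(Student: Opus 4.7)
The plan is to combine the trivial-action reduction of the moment map equation with a compatible generalized almost K\"ahler refinement of $\J$, then apply the maximum principle for pseudo-holomorphic functions on almost complex manifolds from Appendix~\ref{maximum-principle}. Since the action is trivial, $\xi_M=0$, so Definition~\ref{deftmm}(a) collapses to $\J(d\mu^\xi)=-\alpha^\xi$, and expanding $d_G(H+\alpha)=0$ in the Cartan model yields $dH=0$ together with $d\alpha^\xi=0$ for every $\xi\in\frak{t}$; in particular each $\alpha^\xi$ is a closed 1-form.

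Next I would upgrade $\J$ to a generalized almost K\"ahler pair. Choose a generalized metric $G$ on $TM\oplus T^*M$ commuting with $\J$ (such $G$ exists by standard linear algebra). The pair $(\J, G\J)$ is then a generalized almost K\"ahler structure, which by formula (\ref{g.k.pair}) corresponds to a triple $(g,I_+,I_-)$ consisting of a Riemannian metric together with two $g$-compatible almost complex structures. Substituting $(0,d\mu^\xi)\in TM\oplus T^*M$ into $\J(d\mu^\xi)=-\alpha^\xi$ via (\ref{g.k.pair}), the $TM$-component forces the Hamiltonian vector fields $\omega_+^{-1}(d\mu^\xi)$ and $\omega_-^{-1}(d\mu^\xi)$ to coincide (call the common value $X$), while the $T^*M$-component, once the $b$-contribution cancels by virtue of the $TM$-component vanishing, yields $\alpha^\xi=\frac{1}{2}(I_+^*+I_-^*)(d\mu^\xi)$. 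Using $\omega_\pm=gI_\pm$ and orthogonality of $I_\pm$, one computes $I_\pm^*(d\mu^\xi)=g(X,\cdot)$, so both equal the metric dual of $X$, yielding the key identity $\alpha^\xi=I_+^*(d\mu^\xi)=I_-^*(d\mu^\xi)$.

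Finally I would go local and invoke the maximum principle. Closedness of $\alpha^\xi$ produces, on any small neighborhood $U$ of a chosen point, a smooth function $h$ with $dh=-\alpha^\xi$. The complex function $f=\mu^\xi+ih$ then satisfies the Cauchy--Riemann equation with respect to $I_+$: the identities $I_+^*(d\mu^\xi)=\alpha^\xi=-dh$ and $I_+^*(dh)=-I_+^*(\alpha^\xi)=-(I_+^*)^2(d\mu^\xi)=d\mu^\xi$ jointly give $df\circ I_+=i\,df$, so $f$ is $I_+$-pseudo-holomorphic on $U$. Since $M$ is compact, $\mu^\xi$ attains its maximum on each connected component at some interior point $p_0$; constructing $f$ near $p_0$, its real part $\mu^\xi$ achieves an interior maximum there, and the maximum principle of Appendix~\ref{maximum-principle} forces $f$, and hence $\mu^\xi$, to be constant on a neighborhood of $p_0$. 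The maximum set is thus both open and closed and therefore exhausts each connected component of $M$, so $d\mu^\xi\equiv 0$ and $\alpha^\xi=I_+^*(d\mu^\xi)=0$. The main technical obstacles are the bookkeeping needed to extract $\alpha^\xi=I_+^*(d\mu^\xi)$ from (\ref{g.k.pair}) in the presence of the $b$-transform (whose contribution cancels automatically once the Hamiltonian vector fields agree), and the fact that $I_+$ need not be integrable --- which is precisely why the pseudo-holomorphic maximum principle of Appendix~\ref{maximum-principle} is required.
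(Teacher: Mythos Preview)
Your argument is correct and follows essentially the same route as the paper's proof: both choose a compatible generalized almost K\"ahler structure, use formula~(\ref{g.k.pair}) to deduce $\omega_+^{-1}(d\mu^\xi)=\omega_-^{-1}(d\mu^\xi)$ and hence $\alpha^\xi=I_+^*d\mu^\xi=I_-^*d\mu^\xi$, then use closedness of $\alpha^\xi$ to produce a local primitive $h$ making $\mu^\xi+\sqrt{-1}\,h$ pseudo-holomorphic for $I_+$, and conclude via the maximum principle of Appendix~\ref{maximum-principle}. Your write-up is slightly more explicit about the $b$-transform cancellation and the open--closed argument, but the strategy and key identities are identical.
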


\begin{proof}  It has been shown that there exists a generalized almost complex structure $\J_2$ such that $\J_1=\J$ and $\J_2$ form a generalized almost K\"ahler pair, see for instance, \cite[Sec. 3]{Ca06} and \cite{NY07}. As we explained in Section \ref{g.c.geometry.review}, the generalized almost K\"ahler structure induces a triple $(g,I_+,I_-)$ consisting of a Riemannian metric $g$ and two almost complex structures $I_+$ and $I_-$ compatible with $g$; moreover, one can reconstruct $\J_1$ and $\J_2$ from
 the triple $(g,I_+,I_-)$ using Formula (\ref{g.k.pair}). Given $\xi \in \frak{t}$, by assumption  we have
$\J_1 d\mu^{\xi} = \alpha^{\xi}$, i.e.,
\begin{equation} \label{ham-eq} \dfrac{1}{2}\left(\begin{matrix} &1 & 0\\ & b &
1\end{matrix}\right) \left(\begin{matrix}& I_+ + I_- &
-(\omega_+^{-1}- \omega_-^{-1})\\ &\omega_+-\omega_- &
-(I_+^*+ I_-^*)\end{matrix} \right)\left(\begin{matrix} &1 & 0\\
&- b & 1\end{matrix}\right)\left(\begin{matrix}& 0\\ & d\mu^{\xi} \end{matrix}\right)=\left(\begin{matrix}& 0\\ & \alpha^{\xi} \end{matrix}\right), \end{equation}
where $\omega_{\pm}=gI_{\pm}$. A straightforward calculation shows that
\[ \omega_+^{-1}(d\mu^{\xi})=\omega_-^{-1}(d\mu^{\xi}),\,\,\, I_+^*d\mu+I_-^*d\mu^{\xi}=2\alpha^{\xi}.\]

Since $I_{\pm}$ are compatible with $g$, we have $\omega_{\pm}=gI_{\pm}=-I_{\pm}^*g$ and so $\omega_{\pm}^{-1}=g^{-1}I_{\pm}^*$.  It follows from $\omega_+^{-1}(d\mu^{\xi})=\omega_-^{-1}(d\mu^{\xi})$ that $I_+^*d\mu^{\xi}=I_-^*d\mu^{\xi}$. Thus $I^{*}_{\pm}d\mu^{\xi}=\alpha^{\xi}$. However, since by assumption the trivial action is Hamiltonian, condition b) in Definition \ref{deftmm} implies that $d\alpha^{\xi}=0$. Locally, we can always find a function $h$ such that $I^*_{\pm}d\mu^{\xi}=dh$.
If the generalized almost complex structure $\J_2$ is integrable, i.e., $(\J_1,\J_2)$ forms a generalized K\"ahler pair, then
$I_{\pm}$ must be integrable complex structures and $\mu^{\xi}$ is locally the real part of the $I_{\pm}$-holomorphic function $\mu^{\xi} + \sqrt{-1}h$.  If $M$ is compact, it follows from the maximum principle of the real part of a holomorphic function that $\mu^{\xi}$ has to be a constant. In the general case, locally $\mu^{\xi}$ is the real part of a pseudo-holomorphic function with respect to almost complex structures $I_{\pm}$. By the maximum principle of the real part of a pseudo-holomorphic function as we explained in Appendix \ref{maximum-principle}, $\mu^{\xi}$ has to be a constant provided $M$ is compact.

\end{proof}

Now consider the Hamiltonian action of a compact connected torus $T$ on a compact $H$-twisted generalized complex manifold $M$ with a generalized moment map $\mu$. Suppose that $\xi \in \t$ generates a compact connected sub-torus $T_1$ in $T$, i.e., $\xi$ is a generic element in the Lie algebra $\frak{t}_1$ of $T_1 \subset T$.

\begin{lemma} \label{crticial-set-equal-fixed-point-set}  Under the above assumptions, the critical set \[ \text{Crit}(\mu^{\xi})=\{ x\in M \mid (d\mu^{\xi})_x=0\} \] coincides with the fixed point set $F$ of the $T_1$ action on $M$ for any $\xi \in \frak{g}$.
\end{lemma}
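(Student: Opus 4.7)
The proof will split into the two inclusions $\text{Crit}(\mu^{\xi}) \subseteq F$ and $F \subseteq \text{Crit}(\mu^{\xi})$. The first is nearly immediate: if $d\mu^{\xi}(x)=0$, then the defining identity $\J d\mu^{\xi} = -\xi_M - \alpha^{\xi}$ evaluated at $x$ forces $\xi_M(x) + \alpha^{\xi}(x) = 0$ in $T_xM \oplus T_x^*M$, and since the two summands are complementary, both vectors vanish separately. In particular $\xi_M(x) = 0$, and because $\xi$ generates a one-parameter subgroup dense in $T_1$, the constant curve $t\mapsto x$ is an integral curve of $\xi_M$, so $\exp(t\xi)\cdot x = x$ for all $t$ and continuity gives $x \in F$.

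For the reverse inclusion, fix $x \in F$ and let $Z$ be the connected component of $F$ containing $x$, with inclusion $i: Z \hookrightarrow M$. Applying Proposition \ref{fixed-points-submfold} to the subtorus $T_1$ (which still preserves $\J$) shows that $Z$ is an $i^*H$-twisted generalized complex submanifold with induced structure $\J_Z$. I would then argue that the restricted $T_1$-action on $(Z, \J_Z)$ is itself Hamiltonian, with generalized moment map $\mu^{\xi}|_Z$ and moment one-form $i^*\alpha^{\xi}$: equivariant closedness of $i^*H + i^*\alpha$ on $Z$ is immediate from naturality of pullback and the vanishing of $\xi_Z$, while the defining identity $\J_Z\, d(\mu^{\xi}|_Z) = -i^*\alpha^{\xi}$ comes from unpacking how the $\sqrt{-1}$-eigenbundle $L_Z$ is cut out from $L$ as recalled at the end of Section \ref{g.c.geometry.review}. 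Since the $T_1$-action on the compact submanifold $Z$ is trivial, Lemma \ref{trivial-Ham-action} applies and yields $i^*\alpha^{\xi} = 0$; that is, $\alpha^{\xi}(v) = 0$ for every $v \in T_xZ$.

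To upgrade this vanishing to the full dual $T_x^*M$, I would exploit $T_1$-equivariance. Since $\alpha$ is $T$-invariant and $T_1$ fixes $x$, the covector $\alpha^{\xi}(x) \in T_x^*M$ is fixed by the induced linear $T_1$-representation. By the equivariant slice theorem, $T_xZ$ coincides with the fixed subspace $(T_xM)^{T_1}$, so the $T_1$-complement $N_xZ$ carries no nonzero $T_1$-fixed vectors, and neither does $(N_xZ)^*$. Decomposing $T_x^*M = (T_xZ)^* \oplus (N_xZ)^*$: the $(T_xZ)^*$-component of $\alpha^{\xi}(x)$ equals $(i^*\alpha^{\xi})(x) = 0$ by the previous paragraph, while the $(N_xZ)^*$-component is forced to be zero by $T_1$-invariance. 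Thus $\alpha^{\xi}(x) = 0$, and substituting back into the defining identity yields $\J d\mu^{\xi}(x) = 0$; one more application of $\J$, using $\J^2 = -\mathrm{id}$, gives $d\mu^{\xi}(x) = 0$, as required.

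The main obstacle I anticipate is the reduction to Lemma \ref{trivial-Ham-action}, i.e., verifying that the restricted $T_1$-action on $(Z, \J_Z)$ really is Hamiltonian with precisely the pulled-back moment data; this requires carefully tracking how $\J_Z$ is constructed from $\J$ via the eigenbundle description of generalized complex submanifolds, and checking that the ambient identity $\J d\mu^{\xi} = -\xi_M - \alpha^{\xi}$ restricts cleanly to $Z$. Once that reduction is in place, Lemma \ref{trivial-Ham-action} (which silently absorbs the maximum-principle argument of Appendix \ref{maximum-principle}) and the elementary equivariance argument above do the remaining work.
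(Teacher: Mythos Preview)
Your proof is correct and follows the same overall architecture as the paper: the easy inclusion $\crit(\mu^{\xi})\subset F$, then on a component $Z$ of $F$ use Proposition~\ref{fixed-points-submfold} and the Hamiltonian restriction (which the paper obtains by citing \cite[Lemma 4.8]{Lin07}) to feed into Lemma~\ref{trivial-Ham-action}, and finally a $T_1$-invariance argument to pass from tangential vanishing to full vanishing at $x$. The one genuine difference is in that last step. The paper extracts $d(\mu^{\xi}|_Z)=0$ from Lemma~\ref{trivial-Ham-action}, chooses a $T_1$-invariant metric, observes that $\grad\mu^{\xi}$ is $T_1$-invariant and hence tangent to $Z$ at $x$, and concludes $|d\mu^{\xi}_x|^2=\langle\grad\mu^{\xi}_x,d\mu^{\xi}_x\rangle=0$. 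You instead extract $i^*\alpha^{\xi}=0$, use the linearized $T_1$-representation on $T_x^*M$ to force $\alpha^{\xi}(x)=0$, and then invoke the moment-map identity. These are really the same invariance trick in dual guises: your representation-theoretic argument is metric-free and slightly cleaner, while the paper's gradient version is more hands-on. Note, incidentally, that Lemma~\ref{trivial-Ham-action} already gives $d(\mu^{\xi}|_Z)=0$ as well, so you could have applied your $T_1$-invariance argument directly to the covector $d\mu^{\xi}_x$ and bypassed $\alpha^{\xi}$ entirely.
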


\begin{proof} The inclusion $ \text{Crit}(\mu^{\xi}) \subset F$ is obvious. It suffices to show that for any
 $x \in F$ we have $(d\mu^{\xi})_x=0$. By Proposition \ref{fixed-points-submfold} the fixed point set $F$ of the $T_1$-action is a generalized complex submanifold. Moreover, it follows from \cite[Lemma 4.8]{Lin07} that the induced trivial action of $T_1$ on $F$ is Hamiltonian with the generalized moment map $\mu \mid_{F}: F\rightarrow \frak{t}_1^*$. Since $M$ is compact, $F$ has to be compact itself. By Lemma \ref{trivial-Ham-action}, we have $(d\mu^{\xi})\mid_{F}=0$. Choose a $T_1$-invariant metric on $M$. Since $\mu^{\xi}$ is $T_1$-invariant, $\text{grad}(\mu^{\xi})$, the gradient flow of $\mu^{\xi}$, is also invariant under the linearized $T_1$ action. It follows that at each point $x \in F$, $\text{grad}(\mu^{\xi})$ is tangent to $F$. Thus $
\langle \text{grad}\mu^{\xi}, d\mu^{\xi}\rangle_x=0$. This implies that for any $x \in F$,  $(d\mu^{\xi})_x=0$.

\end{proof}

\begin{remark} It is clear from the proof that in the statement of Lemma \ref{crticial-set-equal-fixed-point-set}, we need only to assume that all fixed points submanifolds are compact. This compactness assumption here is essential. For instance, Hu \cite[Sec. 4.5]{Hu05} constructed
an example of a Hamiltonian $S^1$ action on a generalized complex manifold with a proper moment map $f$ such that $\crit (f) \nsubseteqq M^{S^1}$.
Lemma \ref{crticial-set-equal-fixed-point-set} fails because in Hu's example the fixed point submanifold is a copy of complex plane $\C$ which is non-compact.
\end{remark}


  We are ready to give a proof of Proposition \ref{moment-map-Morse-Bott}. We are going to use the same notations as in Lemma \ref{crticial-set-equal-fixed-point-set}.
\begin{proof}[Proof of Proposition \ref{moment-map-Morse-Bott}]
Let $f := \mu^{\xi}$. In view of Lemma \ref{crticial-set-equal-fixed-point-set}, it suffices to show that the Hessian of $f$ is nondegenerate in the normal direction to $M^{T_1}$.

Because $\xi$ is generic, $M^{T_1} = ker(X)$. The vector field $X$ linearizes at $p \in M^{T_1}$ to $\mathcal{A} \in End(T_pM)$ defined by the formula $\mathcal{A}(w_p) = [X,w]_p$ for $w \in C^{\infty}(TM)$  . Since $T_1$ is connected we have $T_p(M^{T_1}) = \ker \mathcal{A}$ as subsets of $T_pM$.
The Hessian of $f$ at $p$ is a linear map  $Hess_p(f): T_pM \rightarrow T_p^*M$, defined by $ Hess_p(f)(w_p) = (d L_w (f))_p$ where $w\in C^{\infty}(TM)$. We need to show that for all $p \in \crit(f)$, $\ker(Hess_p(f)) \subset T_p \crit(f)$.

Now let $\beta$ be the canonical Poisson bivector associated to the generalized complex structure $\J$. As we explained in Remark \ref{poisson-Hamiltonian}, we have $X = -\beta df$. Thus
\begin{align*}
-[X,w] = L_w X =- L_w (\beta df) =- (L_w \beta)df- \beta( L_w df).
\end{align*}
Thus,
\begin{align*}
\mathcal{A}(w_p) = (L_w \beta)_pdf_p + \beta_p( L_w df)_p = \beta_p( L_w df)_p = \beta_p( Hess_p(f)(w_p)),
\end{align*}
where we've used that $df_p = 0$ and $( L_w df)_p = (d L_w f)_p = Hess_p(f)(w_p)$. Thus $\ker(Hess_p(f)) \subset \ker(\mathcal{A}) = T_p(M^{T_1}) = T_p \crit(f)$ as desired.
\end{proof}

\begin{remark} Choose an invariant generalized almost complex structure $\J_2$ such that $(\J,\J_2)$ form a generalized almost K\"ahler pair. It is easy to show that for any $p\in M$, $T_pM$ splits as the direct sum of $T_pM^{T_1}$ and $N$, where $N$ is the orthogonal complement of $T_p M^{T_1}$ in $T_pM$ with respect to the Riemannian metric induced by the generalized almost K\"ahler pair $(\J,\J_2)$; moreover, the vector space $N$ inherits a generalized K\"ahler structure which is invariant under the linearized action of $T_1$ on $N$.  It follows that $N$  admits a complex structure which is invariant under the operator $\mathcal{A}$ which we defined in the proof of Proposition \ref{moment-map-Morse-Bott}. As a direct consequence, we see that the $Hess_p(f)$ must have even index. So $f=\mu^{\xi}$ must be a Morse-Bott function of even index.
\end{remark}

It follows easily from Proposition \ref{moment-map-Morse-Bott} that the twisting form $H+\alpha$ is compatible with the torus action (Definition \ref{compa}).

\begin{corollary}\label{compmom}
Let $T \times M \rightarrow M$ be a Hamiltonian $T$-action for a compact, connected $H$-twisted generalized complex manifold $M$ with moment map $\mu: M \rightarrow \lie{t}^*$ and moment 1-form $\alpha$. For $x \in M$, denote $\lie{t}_x$ to be the infinitesimal stabilizer of $x$. Then $$ ker( \alpha_x) \supset \lie{t}_x$$ where we regard $\alpha_x$ as an element of $Hom( \lie{t}, T_x^*M)$.
\end{corollary}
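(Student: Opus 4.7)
The plan is to deduce Corollary \ref{compmom} as a short formal consequence of Proposition \ref{moment-map-Morse-Bott} together with the defining identity $\mathcal{J}\, d\mu^{\xi} = -\xi_M - \alpha^{\xi}$ from Definition \ref{deftmm}. Fix a point $x \in M$ and $\xi \in \lie{t}_x$. By definition of the infinitesimal stabilizer, $\xi_M(x) = 0$.

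Next, I would invoke Proposition \ref{moment-map-Morse-Bott}, which says that $\mu$ is a nondegenerate abstract moment map in the sense of Definition \ref{absmom}. In particular, property (1) of that definition gives $\mathrm{Crit}(\mu^{\xi}) = \{p \in M : \xi_M(p) = 0\}$. Combined with $\xi_M(x) = 0$ from the previous step, this forces $x \in \mathrm{Crit}(\mu^{\xi})$, i.e.\ $(d\mu^{\xi})_x = 0$.

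Now evaluate the defining Hamiltonian identity at $x$. Since $(d\mu^{\xi})_x = 0$ and $\mathcal{J}_x$ is a linear endomorphism of $T_xM \oplus T_x^*M$, we have
\[
0 = \mathcal{J}_x (d\mu^{\xi})_x = -\xi_M(x) - \alpha^{\xi}_x.
\]
Taking the $T_x^*M$-component of this equality (using $\xi_M(x) \in T_xM$ and $\alpha^{\xi}_x \in T_x^*M$) and combining with $\xi_M(x) = 0$ already established, one obtains $\alpha^{\xi}_x = 0$. Hence $\xi \in \ker(\alpha_x)$, proving $\lie{t}_x \subset \ker(\alpha_x)$.

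There is no real obstacle: the whole content is already contained in Proposition \ref{moment-map-Morse-Bott}. The only mild subtlety worth spelling out in the write-up is that the equation $\mathcal{J}\, d\mu^{\xi} = -\xi_M - \alpha^{\xi}$ is an equality in $TM \oplus T^*M$, so at a point where $d\mu^{\xi}$ vanishes the vector and covector components must vanish separately, yielding $\alpha^{\xi}_x = 0$ from the covector part once $\xi_M(x) = 0$ has been used.
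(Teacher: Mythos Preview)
Your proof is correct and follows essentially the same route as the paper's own argument: both use the defining identity $\mathcal{J}\,d\mu^{\xi} = -\xi_M - \alpha^{\xi}$ together with Proposition~\ref{moment-map-Morse-Bott} (via property~(1) of Definition~\ref{absmom}) to conclude that $(\xi_M)_x = 0$ forces $(d\mu^{\xi})_x = 0$ and hence $\alpha^{\xi}_x = 0$. Your remark about reading off the $T_x^*M$-component is a harmless elaboration of what the paper leaves implicit.
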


\begin{proof}

Condition a) of Definition \ref{deftmm} asserts

$$ -\xi_M - \alpha^{\xi} =  \mathcal{J} d \mu^{\xi}$$
for all $\xi \in \lie{t}$. Proposition \ref{moment-map-Morse-Bott} says that if $(\xi_M)_x = 0$ then $d \mu^{\xi}_x =0$ and consequently $\alpha_x^{\xi} =0$.
\end{proof}

\section{Kirwan injectivity and Surjectivity and its application} \label{main-results}

 Proposition \ref{moment-map-Morse-Bott} establishes that the moment map $\mu$ of a compact $H$-twisted generalized Hamiltonian $T$-space is a nondegenerate abstract moment map and Corollary \ref{compmom} establishes that the equivariant twisting 3-form $H+\alpha$ is compatible. Thus Theorems \ref{kirinj} and \ref{kirsur} follow from Theorem \ref{morseinj} and Theorem \ref{morsesurj} respectively. Theorem \ref{eqformality} then follows from Proposition \ref{weylinv} because the restriction of a generalized Hamiltonian action is generalized Hamiltonian.


One of the early motivations for this paper was to prove the following result.

\begin{theorem}\label{applebaby}
Let $(M, J, H)$ be a complex generalized complex manifold with nonexact twisting $H$. Then any generalized Hamiltonian torus action must have fixed point locus of dimension at least four.
\end{theorem}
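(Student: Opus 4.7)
The plan is to exploit the fact that when the generalized complex structure $\J$ comes from an honest complex structure $J$ — i.e.\ $\J(X+\xi) = -JX + J^*\xi$ — the induced Poisson bivector of Remark \ref{poisson-Hamiltonian} vanishes identically, which will force any generalized Hamiltonian torus action to be trivial. Once the action is shown to be trivial, the claim reduces to a dimension count using the nonexactness of $H$.

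First I would verify that the Poisson bivector $\beta : T^*M \to TM$ associated to $\J$ really is zero. Decomposing $\J$ with respect to $TM \oplus T^*M$, the bivector $\beta$ is just the composite $T^*M \hookrightarrow TM \oplus T^*M \xrightarrow{\J} TM \oplus T^*M \twoheadrightarrow TM$, and for the above form of $\J$ this composite sends $\xi$ to the $TM$-component of $J^*\xi$, which is $0$. Next, by Remark \ref{poisson-Hamiltonian}, for any generalized moment map $\mu$ and any $\xi \in \lie{t}$ one has $\xi_M = -\beta(d\mu^{\xi}) = 0$, so every element of $\lie{t}$ acts trivially. Since $T$ is connected this forces the entire $T$-action to be trivial, and therefore $M^T = M$.

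Finally I would bound the dimension: since $H \in \Omega^3(M)$ is nonexact it is in particular nonzero, forcing $\Omega^3(M) \neq 0$ and hence $\dim_{\R} M \geq 3$; since $M$ is a complex manifold its real dimension is even, so $\dim_{\R} M \geq 4$, which gives $\dim M^T = \dim M \geq 4$.

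No step presents a serious obstacle; the substantive observation is simply that a complex-type generalized complex structure has vanishing Poisson tensor, after which Remark \ref{poisson-Hamiltonian} does the work. The one point to watch is the phrase ``complex generalized complex manifold'' in the statement, which I am reading as a GC manifold whose $\J$ comes globally from a complex structure $J$ (equivalently, $\J$ has type $\dim_{\C} M$ everywhere); this is the only interpretation under which the conclusion, as stated, follows from the mechanism above.
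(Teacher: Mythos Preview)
Your reading of ``complex generalized complex manifold'' is almost certainly not the intended one: the word ``complex'' here is a slip for ``compact''. The paper's own proof invokes Proposition~\ref{moment-map-Morse-Bott} and Corollary~\ref{qudimes}, both of which require compactness and neither of which needs $\J$ to be of complex type; moreover the remark immediately following the theorem, contrasting it with symplectic toric manifolds, would be vacuous if the action were forced to be trivial. Under the intended reading the Poisson bivector $\beta$ is generally nonzero, so your argument collapses at the first step and there is no reason for the $T$-action to be trivial. (Your argument is of course correct for the complex-type interpretation, but then the result is immediate and would not have been ``one of the early motivations for this paper''.)

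The paper's actual proof uses the twisted-cohomology machinery of \S\ref{morese theory}. One applies Corollary~\ref{qudimes} twice --- once with the compatible twisting $H+\alpha$ and once with the trivially compatible twisting $0$ --- to obtain
\[
\dim H(M;H)=\dim H(M^T;H)\qquad\text{and}\qquad \dim H(M)=\dim H(M^T).
\]
Since $H$ is nonexact on $M$, inequality~(\ref{egadyikes}) gives $\dim H(M;H)<\dim H(M)$, and combining the three relations yields $\dim H(M^T;H)<\dim H(M^T)$; hence $H|_{M^T}$ is itself nonexact. This forces some component of $M^T$ to have dimension at least $3$, and since $M^T$ is even-dimensional (Proposition~\ref{fixed-points-submfold}) one gets $\dim M^T\ge 4$. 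Your final paragraph is exactly this last step, but applied to $M$ instead of $M^T$; the substance of the theorem is that nonexactness of $H$ survives restriction to the fixed locus, and that is what requires the Morse-theoretic input.
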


\begin{proof}
Let $T$ be the torus and $\mu$ the moment map. By Proposition \ref{moment-map-Morse-Bott} and Corollary \ref{compmom} $(M,T, \mu)$ is an abstract moment map with compatible equivariant twisting $H + \alpha$ where $\alpha$ is the moment 1-form. Thus by Corollary \ref{qudimes}, $$\dim H(M ; H) = \dim H(M^T; H).$$ On the other hand, $0$ is always a compatible twisting so $$ \dim H(M) = \dim H(M^T).$$ Since $H$ is not exact, we know by (\ref{egadyikes}) that $\dim H(M; H) < \dim H(M)$. Consequently $H(M^T ; H) < H(M^T)$ and we conclude that the restriction of $H$ to $M^T$ is \emph{not} exact. Since $H$ is a 3-form, it must be that $M^T$ has a component of dimension 3 or more, and because $M^T$ is even dimensional (see Prop. \ref{fixed-points-submfold}) this completes the proof.
\end{proof}

Theorem \ref{applebaby} stands in stark contrast with the symplectic world, where Hamiltonian actions with isolated fixed points abound (e.g. toric manifolds). In the course of writing this paper, we discovered a proof of Theorem \ref{applebaby} that avoids twisted cohomology and in fact leads to even stronger constraints. We will present these arguments in a future paper, along with new examples of compact GC Hamiltonian actions.

\appendix

\section{Maximum principle for pseudo-holomorphic functions on almost complex manifolds}
\label{maximum-principle}

In this section, we give a self-contained proof of the maximum principle for pseudo-holomorphic functions on almost complex manifolds.
We believe that the maximum principle in this setting should have been known to experts working in the related areas and we are not claiming any originality. We are presenting a proof here just because the central results of our paper are built upon it for which we can not find a good reference.

Let $(M, J)$ be an almost complex manifold. Then the almost complex structure induces
a splitting of the complexified cotangent bundle $T^*_{\C}(M)=T^*(M)^{1,0}\oplus T^*(M)^{0,1}$.
In this context, a complex valued function $f+\frak{i}g \in C^{\infty}(M)$
is defined to be a pseudo-holomorphic function on $M$ if $(df)_x+\frak{i}(dg)_x \in T_x^*(M)^{1,0} $
for any $x\in M$. In this appendix, we prove the following result.

\begin{theorem} \label{maixmum-princ.-psedo-holomorphic-func.} Suppose $(M, J)$ is an almost complex manifold.
Suppose $f$ is the real part of a pseudo-holomorphic function $f+\frak{i}g$ on $M$.
Then for any $x\in M$ there exists an open neighborhood $B \ni x$ such that

\[ \sup_B f=\sup_{\partial B}f, \,\,\,\inf_B f=\inf_{\partial B}f.\]

\end{theorem}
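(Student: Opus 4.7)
The plan is to derive from the pseudo-holomorphic hypothesis that $f$ satisfies a linear uniformly elliptic second-order PDE without zeroth-order term on a sufficiently small ball around $x$, whereupon the statement follows from the classical maximum principle. The starting observation is that the condition $(df)_p + \sqrt{-1}(dg)_p \in T^*_p M^{1,0}$ says $df + \sqrt{-1}\,dg$ is the $(+\sqrt{-1})$-eigenvector of $J^*$, so separating real and imaginary parts yields the pair
\begin{equation*}
J^* df = -dg, \qquad J^* dg = df.
\end{equation*}
Applying $d$ to the first identity gives $d(J^* df) = -d^2 g = 0$, so the 1-form $J^* df$ is closed.

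Next, work in coordinates: choose $(y^1,\ldots,y^{2n})$ centered at $x$ so that $J^*_x$ coincides with the standard almost complex structure $J_0$ on $\R^{2n}$, i.e., $J_0\, dy^k = dy^{k+n}$ and $J_0\, dy^{k+n} = -dy^k$ for $1 \leq k \leq n$. Expanding $d(J^* df) = 0$ in these coordinates gives a 2-form identity whose $(dy^a \wedge dy^b)$-coefficient is a linear combination of second and first derivatives of $f$ with smooth coefficients depending on $J$ and $\partial J$. Summing the coefficients of $dy^k \wedge dy^{k+n}$ over $k = 1,\ldots, n$ (equivalently, contracting the 2-form against the bivector dual to $\omega_0 = \sum_k dy^k \wedge dy^{k+n}$) produces a single scalar equation of the form
\begin{equation*}
\Delta f \;+\; R^{ij}(y)\,\partial_i \partial_j f \;+\; S^i(y)\,\partial_i f \;=\; 0,
\end{equation*}
where $\Delta$ is the Euclidean Laplacian and $R^{ij}, S^i$ are smooth functions with $R^{ij}(0) = 0$. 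The principal symbol of this operator at $y = 0$ is $|\xi|^2$, so by continuity of $R^{ij}$ the operator is uniformly elliptic on every sufficiently small closed ball $B$ centered at $x$; crucially, there is no zeroth-order term.

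Finally, the classical weak maximum principle for linear uniformly elliptic second-order operators without zeroth-order term (see, e.g., Chapter~3 of Gilbarg--Trudinger) gives $\sup_B f = \sup_{\partial B} f$ and $\inf_B f = \inf_{\partial B} f$ on such $B$. The main technical point is extracting a single scalar elliptic PDE from the overdetermined 2-form system $d(J^* df) = 0$; the contraction against $\omega_0$ accomplishes this because at the base-point $x$ it reproduces exactly the Laplacian (by the direct computation using $(J_0)^c_{k+n} = -\delta^c_k$ and $(J_0)^c_k = \delta^c_{k+n}$), and ellipticity persists in a neighborhood by the smoothness of $J$ and the stability of uniform ellipticity under small $C^0$ perturbation of the leading symbol.
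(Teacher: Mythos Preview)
Your proof is correct and follows essentially the same strategy as the paper's: extract from the Cauchy--Riemann relations a second-order linear PDE for $f$ with no zeroth-order term, check that its principal symbol is the Euclidean Laplacian at the chosen point (hence uniformly elliptic on a small ball by continuity), and invoke the weak maximum principle from Gilbarg--Trudinger. The only cosmetic difference is in how the scalar equation is produced: the paper differentiates one CR equation, substitutes the other, and sums over a diagonal index to obtain leading coefficient $\delta^{pq}+\sum_k J_k^p J_k^q$, whereas you package the same computation as $d(J^*df)=0$ and contract with the standard symplectic bivector.
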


Before beginning the proof, we first recall the maximum principle for the elliptic partial differential equations of second order
as treated in \cite{GT1977}.
Let $L$ be a second order linear differential operator on a domain $\Omega$ of $R^m$ given by
\begin{equation} \label{def-of-elliptic} Lu=a^{ij}D_{ij}u+b^iD_iu+c_iu,
\end{equation}
where $a^{ij}=a^{ji}$, $D_{i}u=\frac{\partial u}{\partial x_i}, D_{ij}u=\frac{\partial^2 u}{\partial x_i \partial x_j}$.
$L$ is said to be elliptic at $x \in \Omega$ if the coefficient matrix $[a^{ij}(x)]$ is positive definite.
$L$ is said to be elliptic in $\Omega$ if it is elliptic at each point of $\Omega$.
The following maximum principle is a fundamental result in the theory of elliptic operators.

\begin{theorem}\label{maximum-princ.-elliptic-eq}(\cite[Thm. 3.1]{GT1977}) Let $L$ be an elliptic operator in the bounded domain $\Omega$. Suppose that
\begin{equation} \label{sub-harmonic-condition}
Lu\geq 0(\leq 0) \text{ in } \Omega, \,\,\, c=0\,\text{ in }\Omega, \end{equation}
with $u \in C^2(\Omega) \cap C^0(\overline{\Omega})$. Then
\[\sup_{\Omega} u=\sup_{\partial \Omega}u, \,\,\,\inf_{\Omega} u=\inf_{\partial \Omega}u.\]

\end{theorem}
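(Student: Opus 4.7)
The plan is the classical two-stage argument. First I would establish the \emph{strict} version --- namely, that if $Lu > 0$ in $\Omega$ and $c \equiv 0$, then $u$ cannot attain its supremum at an interior point. Then I would reduce the given weak inequality $Lu \geq 0$ to the strict one via an explicit perturbation $v = e^{\alpha x_1}$. The infimum statement follows by applying the supremum statement to $-u$, which satisfies $L(-u) \leq 0$, so it suffices to prove the supremum assertion.

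For the strict case, suppose for contradiction that $u$ attains its supremum at an interior point $x_0 \in \Omega$. Then the multivariable second derivative test gives $D_i u(x_0) = 0$ for all $i$, and the Hessian $H := [D_{ij} u(x_0)]$ is negative semi-definite. By ellipticity the matrix $A := [a^{ij}(x_0)]$ is symmetric positive definite, so writing $A = B^2$ with $B$ symmetric positive definite and using the cyclic property of the trace,
\[
a^{ij}(x_0)\, D_{ij}u(x_0) \;=\; \tr(AH) \;=\; \tr(BHB) \;\leq\; 0,
\]
since $BHB$ is negative semi-definite. Combined with the vanishing of the first-order term and the hypothesis $c \equiv 0$, this gives $Lu(x_0) \leq 0$, contradicting $Lu > 0$.

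For the reduction, assume the standard background hypotheses of G--T, namely that $L$ is uniformly elliptic on $\Omega$ (so $a^{11}(x) \geq \lambda > 0$) and that the coefficients $a^{ij}, b^i$ are bounded on $\Omega$. Set $v(x) := e^{\alpha x_1}$ and compute
\[
Lv \;=\; \bigl( \alpha^2 a^{11}(x) + \alpha\, b^1(x) \bigr) e^{\alpha x_1};
\]
choosing $\alpha$ large enough depending on $\lambda$ and $\sup_\Omega |b^1|$ makes $Lv > 0$ throughout $\Omega$. For any $\epsilon > 0$ let $u_\epsilon := u + \epsilon v$, so that $L u_\epsilon \geq \epsilon\, Lv > 0$. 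By the strict case, $u_\epsilon$ cannot attain an interior supremum, hence $\sup_\Omega u_\epsilon = \sup_{\partial \Omega} u_\epsilon$. Since $\Omega$ is bounded, $v$ is bounded on $\overline\Omega$, and letting $\epsilon \downarrow 0$ yields $\sup_\Omega u \leq \sup_{\partial \Omega} u$; the reverse inequality is automatic from $u \in C^0(\overline\Omega)$.

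The main obstacle is arranging the perturbation so that $Lv > 0$ without disturbing the inequality $L u_\epsilon \geq \epsilon\, Lv$. This is precisely where the hypothesis $c \equiv 0$ is essential, since a nonzero $c$-term added to a positive perturbation $\epsilon v$ would contribute an extra $\epsilon\, c\, v$ whose sign could destroy the argument. (The result extends to the case $c \leq 0$ once one restricts attention to points where $u \geq 0$, but this generalization requires additional bookkeeping and replaces $\sup$ by $\sup^+$.) A secondary subtlety is that boundedness of $\Omega$ is used both to guarantee $v = e^{\alpha x_1}$ remains bounded on $\overline\Omega$ and to ensure that $\sup_{\partial\Omega} u$ is realized by compactness.
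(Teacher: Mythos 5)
The paper does not prove this statement; it simply cites it as Theorem~3.1 of Gilbarg--Trudinger \cite{GT1977}. Your reconstruction is correct and is in fact the standard argument appearing in that reference: first the strict case via the second-derivative test at an interior maximum (using $\tr(AH)=\tr(BHB)\leq 0$ for $A$ positive definite and $H$ negative semi-definite), then the reduction to the strict case by perturbing with $v=e^{\alpha x_1}$. You are also right to flag that the perturbation step requires the background boundedness assumption on $|b^1|/\lambda$ that Gilbarg--Trudinger impose earlier in the chapter; without it one cannot choose a single $\alpha$ making $Lv>0$ throughout $\Omega$.
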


We are ready to present a proof of Theorem \ref{maixmum-princ.-psedo-holomorphic-func.}.

\begin{proof} Given an arbitrary point $p\in M$, we can choose a coordinate neighborhood $(U, x^1, x^2, \cdots, x^{2n})$ around $p$ such that under this coordinate system the almost complex $J(x)=[J^i_j(x)]$ coincides with the standard complex structure on $R^{2n}$ at the point $p$, i.e.,
\[J(p)=\left[ \begin{matrix} 0_n & I_n\\-I_n & 0_n \end{matrix} \right] ,\]
where $0_n$ denotes the $n\times n$ zero matrix and $I_n$ the $n\times n$ identity matrix.
Note that $\frac{\partial}{\partial x_k} +\frak{i}J\frac{\partial}{\partial x_k} \in
C^{\infty}(T(M)^{0,1})$ for any $ 1 \leq k \leq 2n$. We have
\[ < \frac{\partial}{\partial x_k} +\frak{i}J\frac{\partial}{\partial x_k}, df+\frak{i}dg>=0\]

since $df+\frak{i}dg \in C^{\infty}(T^*(M)^{1,0})$.

Observe $J\frac{\partial}{\partial x_k}=J_k^p\frac{\partial}{\partial x_p}$.
We get the following generalized Riemann-Cauchy equations. \begin{equation}\label{generalized-Riemann-Cauchy}
\frac{\partial f}{\partial x_k}=J_k^p\frac{\partial g}{\partial x_p},\,\,\,\,
\frac{\partial g}{\partial x_k}=-J_k^p\frac{\partial f}{\partial x_p}
\end{equation}

Therefore \[\begin{split} \frac{\partial}{\partial x_k}\left( \frac{\partial f}{\partial x_k}\right)&=
\frac{\partial}{\partial x_k}\left(J_k^p\frac{\partial g}{\partial x_p}\right)\\&=
\frac{\partial J_k^p}{\partial x_k}\frac{\partial g}{\partial x_p}+J_k^p\frac{\partial}{\partial x_p}\left(\frac{\partial g}{\partial x_k} \right)\\
&=\frac{\partial J_k^p}{\partial x_k}J_p^q\frac{\partial f}{\partial x_q}+J_k^p\frac{\partial}{\partial x_p}\left(-J_k^q \frac{\partial f}{\partial x_q}\right)\\&= \frac{\partial J_k^p}{\partial x_k}J_p^q\frac{\partial f}{\partial x_q}-J_k^p\frac{\partial J_k^q}{\partial x_p}\frac{\partial f}{\partial x_q}-J_k^pJ_k^q\frac{\partial^2 f }{\partial x_p \partial x_q}.
 \end{split}\]

 It follows \[  \frac{\partial^2 f}{\partial x^2_k}+J_k^pJ_k^q\frac{\partial^2 f }{\partial x_p \partial x_q}+\left(J_k^p\frac{\partial J_k^q}{\partial x_p}-\frac{\partial J_k^p}{\partial x_k}J_p^q \right)\frac{\partial f}{\partial x_q}=0.\]

 Summing over the index $k$ we get a second order linear equation  \begin{equation} \label{2nd-order-elliptic-equation} \sum_k\left(\frac{\partial^2 f}{\partial x^2_k}+J_k^pJ_k^q\frac{\partial^2 f }{\partial x_p \partial x_q}\right)+\sum_k\left(J_k^p\frac{\partial J_k^q}{\partial x_p}-\frac{\partial J_k^p}{\partial x_k}J_p^q \right)\frac{\partial f}{\partial x_q}=0.\end{equation}

 Set \[ a^{pq}= \delta_p^q+\sum_kJ_k^pJ_k^q,\] where $\delta_p^q$ is the Kronecker symbol.
 Since $J(p)$ coincides with the standard complex structure on $\R^{2n}$, a simple calculation shows that the matrix
 $[a^{pq}(p)]=2I_{2n}$, where $I_{2n}$ denotes the $2n \times 2n$ identity matrix. This shows clearly that
 the matrix $[a^{pq}]$ is a positive definite symmetric matrix at $p$ and so must be a positive definite symmetric matrix
 on an open ball  $p \in B \subset U$. Thus Equation \ref{2nd-order-elliptic-equation} is a second order elliptic
 equation on a bounded ball $B$. Now Theorem \ref{maixmum-princ.-psedo-holomorphic-func.} is a simple consequence of Theorem \ref{maximum-princ.-elliptic-eq}.  \end{proof}

\section{nondegenerate abstract moment maps}\label{nondegabs}

In this section we will prove Lemma \ref{immunology}. This result was used without much explanation in appendix G of \cite{GGK02} but was later recognized to be subtler than it appears. Here we provide a detailed proof using an extra hypothesis, the existence of an invariant almost (more generally stable) complex structure.

Throughout, let $T$ be a compact torus with lie algebra $\lie{t}$, $M$ be a smooth $T$-manifold, and $\phi: M \rightarrow \lie{t}^*$ a nondegerate, abstract moment map ( Def. \ref{absmom}).

As explained in \cite{GGK02}, condition 1 of Definition \ref{absmom} is equivalent to the condition that for all $ p \in M$,

\begin{equation}\label{Tuesone}
d \phi_p(T_pM) = \lie{t_p}^{\perp}.
\end{equation}
Here given a subspace $\lie{h}\subset \lie{t} $, $\lie{h}^{\perp} \subset \lie{t}^*$ denotes the annihilator of $\lie{h} \subset \lie{t}$.
That said, the next Lemma is not so surprising:

\begin{lemma}\label{Tuestwo}
Let $p,q \in M$ satisfy $\lie{t}_p = \lie{t}_q = \lie{g}$ where $\lie{g}$ is the Lie algebra of a subtorus $G \subset T$. If $p$ and $q$ lie in the same connected component of $ M^{G}$ then $$ \phi(p) + \lie{t}_p^{\perp} = \phi(q) + \lie{t}_q^{\perp}$$
\end{lemma}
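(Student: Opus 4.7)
The plan is to exploit the equivalent formulation (\ref{Tuesone}) of the nondegeneracy condition, which says that at every point the image of $d\phi$ coincides with the annihilator of the infinitesimal stabilizer. Let $N$ denote the connected component of $M^G$ containing both $p$ and $q$, which is a closed submanifold of $M$ since $G$ is a compact torus acting smoothly. Since $p, q \in N$, they can be joined by a smooth path $\gamma: [0,1] \to N$, and I intend to deduce the lemma by showing that the derivative of $\phi \circ \gamma$ takes values in the fixed subspace $\lie{g}^{\perp} = \lie{t}_p^{\perp}$ of $\lie{t}^*$.

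The key observation is the inclusion of stabilizers along $N$: for every $x \in N$, the subtorus $G$ fixes $x$, so $\lie{g} \subset \lie{t}_x$. Taking annihilators reverses inclusions, giving $\lie{t}_x^{\perp} \subset \lie{g}^{\perp}$. Combining this with (\ref{Tuesone}) applied at $x$ yields
\[
d\phi_x(T_xN) \;\subset\; d\phi_x(T_xM) \;=\; \lie{t}_x^{\perp} \;\subset\; \lie{g}^{\perp}.
\]
Thus the restriction $\phi|_N : N \to \lie{t}^*$ has differential pointwise contained in the fixed linear subspace $\lie{g}^{\perp}$.

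Applying the fundamental theorem of calculus to $\phi \circ \gamma$, I would conclude
\[
\phi(q) - \phi(p) \;=\; \int_0^1 \frac{d}{dt}\bigl(\phi \circ \gamma\bigr)(t)\, dt \;=\; \int_0^1 d\phi_{\gamma(t)}\bigl(\gamma'(t)\bigr)\, dt \;\in\; \lie{g}^{\perp},
\]
since $\lie{g}^{\perp}$ is a linear subspace closed under integration. Recalling that $\lie{t}_p = \lie{t}_q = \lie{g}$, this gives $\phi(q) - \phi(p) \in \lie{t}_p^{\perp} = \lie{t}_q^{\perp}$, which is exactly the conclusion $\phi(p) + \lie{t}_p^{\perp} = \phi(q) + \lie{t}_q^{\perp}$.

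There is no significant obstacle; the argument is essentially a one-line consequence of (\ref{Tuesone}) plus path-connectedness of the component of $M^G$. The only mild subtlety is the reminder that (\ref{Tuesone}) uses the \emph{full} tangent space $T_xM$, not just $T_xN$, but the inclusion $T_xN \subset T_xM$ is all that is needed to push the derivative into $\lie{g}^{\perp}$.
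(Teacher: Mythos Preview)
Your proof is correct and follows essentially the same idea as the paper's: both show $\phi(q)-\phi(p)\in\lie{g}^{\perp}$ using condition (1) of the abstract moment map (equivalently (\ref{Tuesone})). The only cosmetic difference is that the paper composes with the projection $\pi_{\lie{g}^*}:\lie{t}^*\to\lie{g}^*$ and observes that $\phi_{\lie{g}}=\pi_{\lie{g}^*}\circ\phi$ is locally constant on $M^G$ (since every point of $M^G$ is critical for each $\phi^{\xi}$, $\xi\in\lie{g}$), whereas you keep the full map $\phi$ and integrate the inclusion $d\phi_x(T_xM)=\lie{t}_x^{\perp}\subset\lie{g}^{\perp}$ along a path; these are two phrasings of the same computation.
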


\begin{proof}
The map $ \phi_{\lie{g}}: M \rightarrow \lie{g}^*$ defined by composing $\phi$ with the projection $\pi_{\lie{g}^*}: \lie{t}^* \rightarrow \lie{g}^*$ is a moment map for the restricted $G$ action. By definition, $\phi_{\lie{g}}$ restricts to a locally constant function on $M^G$ so $\phi_{\lie{g}} ( p) = \phi_{\lie{g}} (q)$ and so $\phi(p) - \phi(q) \in \ker( \pi_{\lie{g}^*}) = \lie{g}^{\perp}$.
\end{proof}

For compact $T$ manifold $M$, there can only be a finite number of distinct isotopy groups $T_p \subset T$, for each of which $M^{T_p}$ has a finite number of components. We deduce:

\begin{corollary}\label{vasy}
If $M$ is compact, then the set of vector spaces $$\{ span(\phi(p)) + \lie{t}_p^{\perp} | p \in M\}$$ is finite.
\end{corollary}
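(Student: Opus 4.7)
The proof will proceed by partitioning $M$ into finitely many locally closed strata on each of which the linear subspace $\mathrm{span}(\phi(p)) + \lie{t}_p^\perp$ is constant. The main tool is Lemma \ref{Tuestwo} together with the standard finiteness of isotropy types for torus actions on compact manifolds.

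I would begin by enumerating the finitely many distinct isotropy Lie algebras $\lie{g}_1, \ldots, \lie{g}_N \subset \lie{t}$ that occur at points of $M$; finiteness here is a standard consequence of the compactness of $M$ together with the slice theorem. Write $G_i \subset T$ for the connected subtorus with Lie algebra $\lie{g}_i$. Each fixed-point set $M^{G_i}$ is a closed submanifold with finitely many connected components $C_{i,1}, \ldots, C_{i, k_i}$. Within each component I would define the stratum
\[
S_{i,j} := \{ p \in C_{i,j} : \lie{t}_p = \lie{g}_i \},
\]
which is $C_{i,j}$ minus the closed subset consisting of points whose isotropy algebra strictly contains $\lie{g}_i$. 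The collection $\{S_{i,j}\}$ forms a finite partition of $M$.

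Next, Lemma \ref{Tuestwo} applies directly to each stratum: for any $p, q \in S_{i,j}$, we have $\lie{t}_p = \lie{t}_q = \lie{g}_i$ and $p, q$ lie in the same connected component of $M^{G_i}$, so $\phi(p) + \lie{g}_i^\perp = \phi(q) + \lie{g}_i^\perp$ as affine subspaces of $\lie{t}^*$. To pass from this affine statement to the linear statement demanded by the corollary, I would note that $\phi(q) - \phi(p) \in \lie{g}_i^\perp$ forces $\phi(q) \in \mathrm{span}(\phi(p)) + \lie{g}_i^\perp$, hence $\mathrm{span}(\phi(q)) + \lie{g}_i^\perp \subset \mathrm{span}(\phi(p)) + \lie{g}_i^\perp$, and the reverse inclusion follows by symmetry. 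Thus $\mathrm{span}(\phi(p)) + \lie{t}_p^\perp$ is constant on each $S_{i,j}$, and the set in the statement has cardinality at most $\sum_i k_i < \infty$.

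I do not anticipate any substantial obstacle. The only subtle point is the finiteness of the stratification, but this is a standard consequence of the slice theorem applied to the compact manifold $M$; once one grants that, the rest is a direct application of the preceding lemma.
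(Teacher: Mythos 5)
Your proposal is correct and takes essentially the same route the paper does: the paper simply notes that a compact $T$-manifold has finitely many isotropy types, each of whose fixed-point sets has finitely many components, and then invokes Lemma \ref{Tuestwo}; you flesh out the same idea by forming strata on which both the connected component of $M^{G_i}$ and the isotropy algebra are fixed, and you spell out the easy passage from the affine equality $\phi(p)+\lie{t}_p^{\perp}=\phi(q)+\lie{t}_q^{\perp}$ to the equality of the spans, which the paper leaves implicit.
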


\begin{lemma}\label{pennyi}
Let $M$ be a compact $T$-manifold equipped with a nondegerate moment map $\phi$ and suppose that $0 \in \lie{t}^*$ is a regular value for $\phi$. There exists a codimension 1 subtorus $H \subset T$ with Lie algebra $\lie{h}$ for which 0 is a regular value for $\phi_{\lie{h}} = \pi_{\lie{h}^*} \circ \phi$, where $\pi_{\lie{h}^*} : \lie{t}^* \rightarrow \lie{h}^*$ is projection.
\end{lemma}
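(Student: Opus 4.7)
The plan is to translate ``$0$ is a regular value of $\phi_{\lie{h}}$'' into a linear-algebraic condition on the one-dimensional subspace $\lie{h}^{\perp}\subset\lie{t}^*$, use Corollary~\ref{vasy} to show the ``bad'' one-dimensional subspaces lie in a finite union of proper subspaces of $\lie{t}^*$, and then invoke density of rational lines to find a good $\lie{h}$.

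First, using the reformulation (\ref{Tuesone}) that $\im(d\phi_p) = \lie{t}_p^{\perp}$, the differential $d(\phi_{\lie{h}})_p = \pi_{\lie{h}^*}\circ d\phi_p$ surjects onto $\lie{h}^*$ at a point $p\in\phi_{\lie{h}}^{-1}(0)$ if and only if $\lie{t}_p^{\perp} + \lie{h}^{\perp} = \lie{t}^*$. A dimension count shows this fails precisely when $\dim\lie{t}_p\geq 2$, or when $\dim\lie{t}_p=1$ and $\lie{h}^{\perp}\subset\lie{t}_p^{\perp}$. Since $0$ is regular for $\phi$, (\ref{Tuesone}) forces $\lie{t}_p = 0$ whenever $\phi(p)=0$, so at any failure point $\phi(p)\neq 0$, whence the condition $\phi(p)\in\lie{h}^{\perp}$ forces $\lie{h}^{\perp} = \R\phi(p)$.

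Next I would observe that in either failure mode the bad line $\R\phi(p)$ is contained in the subspace $V_p := \text{span}(\phi(p)) + \lie{t}_p^{\perp}$, which is proper in $\lie{t}^*$ (in mode~1, $\dim V_p \leq \dim\lie{t}_p^{\perp}+1 \leq \dim\lie{t}-1$; in mode~2, the hypothesis $\lie{h}^{\perp}\subset\lie{t}_p^{\perp}$ gives $\phi(p)\in\lie{t}_p^{\perp}$, so $V_p=\lie{t}_p^{\perp}$ has codimension~$1$). By Corollary~\ref{vasy} the $V_p$ take only finitely many values $V_1,\ldots,V_k$, so the set of bad one-dimensional subspaces of $\lie{t}^*$ is contained in the finite union $\bigcup_i\P(V_i)\subset\P(\lie{t}^*)$, a proper closed subset.

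To conclude, the one-dimensional subspaces of $\lie{t}^*$ corresponding to codimension~1 closed subtori---those spanned (up to scalar) by elements of the integral dual lattice of $\lie{t}$, via $\lie{h}=\ker\xi$ and $H=\exp(\lie{h})$---are dense in $\P(\lie{t}^*)$. Any such rational line avoiding $\bigcup_i\P(V_i)$ yields the required subtorus. The main obstacle is the bookkeeping in the middle step: one must verify that the ``delicate'' mode~2 failure (with $\dim\lie{t}_p=1$ and $\lie{h}^{\perp}\subset\lie{t}_p^{\perp}$) also forces the bad line into a proper subspace from the finite family supplied by Corollary~\ref{vasy}, which is what reduces the problem to finite combinatorics.
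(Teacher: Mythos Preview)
Your proof is correct and follows essentially the same route as the paper's: both translate regularity of $\phi_{\lie{h}}$ at $0$ into the condition $\lie{t}_p^{\perp}+\lie{h}^{\perp}=\lie{t}^*$ via (\ref{Tuesone}), use Corollary~\ref{vasy} to confine the bad lines $\lie{h}^{\perp}$ to a finite union of proper subspaces $\text{span}(\phi(p))+\lie{t}_p^{\perp}$, and then invoke density of rational hyperplanes. The only cosmetic difference is that you spell out the two failure modes explicitly, whereas the paper argues by the contrapositive in one stroke (if $\lie{h}^{\perp}$ avoids every proper $V_p$ then $V_p=\lie{h}^{\perp}+\lie{t}_p^{\perp}=\lie{t}^*$ whenever $\phi(p)\in\lie{h}^{\perp}\setminus\{0\}$); your ``mode 2'' worry is already handled by the same mechanism and is not a genuine obstacle.
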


\begin{proof}
The hyperplane Grassmanian $Gr_1(\lie{t})$ parametrizes the set of codimension one subspaces $\lie{h} \subset \lie{t}$. Those subspaces integrating to codimension one subtori form a dense subset of $Gr_1(\lie{t})$, so to prove Lemma \ref{pennyi} it will suffice to show that the set $$U := \{ \lie{h} \in Gr_1(\lie{t}) | \text{ 0 is a regular value for } \phi_{\lie{h}} = proj_{\lie{h^*}} \circ \phi\}$$ contains a nonempty open set.

It is somewhat clearer to work with the projective space $P(\lie{t}^*)$, which is canonically isomorphic to $Gr_1(\lie{t})$ via the correspondence $\lie{h} \leftrightarrow \lie{h}^{\perp}$. Since $\lie{h}^{\perp} = \ker( \pi_{\lie{h}^*})$ it follows easily that $ 0$ is a regular value for $\phi_{\lie{h}}$ if and only if $$\lie{t}^* = im(d\phi_p) + \lie{h}^{\perp}  = \lie{t}_p^{\perp} + \lie{h}^{\perp}$$ for all $p \in M$ satisfying $ \phi( p) \in  \lie{h}^{\perp}$.

If $\phi(p) = 0$ then   $im(d\phi_p) = \lie{t}^*$ by hypothesis. If $\phi(p) \in \lie{h}^{\perp} - 0$, then $span( \phi(p)) = \lie{h}^{\perp}$. Thus if $\lie{h}^{\perp}$ lies outside of the finite set of \emph{proper} vector subspaces described in Corollary \ref{vasy}, the moment map $\phi_{\lie{h}}$ is guaranteed to be regular at zero. This is an open and nonempty condition, completing the proof.
\end{proof}

Iterating Lemma \ref{pennyi} enables us to prove (i) and (ii) of Proposition \ref{immunology}. In particular, we construct a sequence of subtori $T =T_n \supset T_{n-1} \supset T_{n-1} ....\supset T_1$ such that the moment map $\phi_k$ for each $T_k$ is regular at $0$, and then choose arbitrarily $ \xi_k \in \lie{t}_k - \lie{t}_{k-1}$.

We prove (iii) in two steps. We denote the restricted function $f_k:= \phi^{\xi_{k+1}}|_{\phi_k^{-1}(0)}$.

\begin{lemma}
The critical set $Crit(f_k) = \{ p \in \phi^{-1}_k(0) | \text{ $(\lie{t}_{k+1})_p \neq 0 $ } \}  $.
\end{lemma}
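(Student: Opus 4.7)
The plan is to work linearly with the derivative of the moment map and exploit the characterization of a nondegenerate abstract moment map given by equation (\ref{Tuesone}): $d\phi_p(T_pM) = \lie{t}_p^{\perp}$ for every $p \in M$. Under the inclusion $i: \phi_k^{-1}(0) \hookrightarrow M$, one has $f_k = \phi^{\xi_{k+1}} \circ i$, so $df_k|_p = 0$ iff $\xi_{k+1}$ annihilates $d\phi_p(T_p\phi_k^{-1}(0)) \subset \lie{t}^*$. Thus the problem reduces to computing this image explicitly.

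I would then observe that $T_p\phi_k^{-1}(0) = \ker(\pi_k \circ d\phi_p)$, where $\pi_k: \lie{t}^* \to \lie{t}_k^*$ is the projection (whose kernel is $\lie{t}_k^{\perp}$). A short linear-algebra computation gives
\[
d\phi_p\bigl(T_p\phi_k^{-1}(0)\bigr) = d\phi_p(T_pM) \cap \lie{t}_k^{\perp} = \lie{t}_p^{\perp} \cap \lie{t}_k^{\perp} = (\lie{t}_p + \lie{t}_k)^{\perp}.
\]
Consequently, $p \in \text{Crit}(f_k)$ if and only if $\xi_{k+1} \in (\lie{t}_p + \lie{t}_k)^{\perp\perp} = \lie{t}_p + \lie{t}_k$.

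To convert this criterion into the statement about $(\lie{t}_{k+1})_p$, I would invoke condition (ii) of Proposition \ref{immunology}: since $0$ is a regular value of $\phi_k$, the moment-map condition (\ref{Tuesone}) applied to $T_k$ forces $\lie{t}_k \cap \lie{t}_p = 0$ at every $p \in \phi_k^{-1}(0)$. Then, using the decomposition $\lie{t}_{k+1} = \lie{t}_k \oplus \mathbb{R}\xi_{k+1}$, a direct argument shows that a nonzero element of $\lie{t}_{k+1} \cap \lie{t}_p$ must have nonzero $\xi_{k+1}$-coefficient, so $(\lie{t}_{k+1})_p \neq 0$ precisely when $\xi_{k+1}$ can be written as $v + \eta$ with $v \in \lie{t}_k$ and $\eta \in \lie{t}_p$, i.e.\ when $\xi_{k+1} \in \lie{t}_p + \lie{t}_k$. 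Combined with the previous step, this proves the lemma.

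The routine part is the linear algebra of annihilators; the only point requiring care is the use of regularity of $\phi_k$ at $0$ to ensure $\lie{t}_k \cap \lie{t}_p = 0$, which is what lets one translate the condition $\xi_{k+1} \in \lie{t}_p + \lie{t}_k$ into a statement about $(\lie{t}_{k+1})_p$ being nonzero. This is the step I would present most carefully, as conflating the stabilizer in $\lie{t}_k$ with the stabilizer in $\lie{t}$ would cause the equivalence to fail.
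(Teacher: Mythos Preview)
Your proof is correct and follows essentially the same approach as the paper: both arguments reduce to computing the image $d\phi_p(T_p\phi_k^{-1}(0))$ using the characterization (\ref{Tuesone}) of a nondegenerate abstract moment map, together with the regularity of $\phi_k$ at $0$. The only minor difference is that the paper applies (\ref{Tuesone}) directly to $\phi_{k+1}$ and argues via a dimension count, obtaining $\dim d_pf_k(T_p\phi_k^{-1}(0)) = 1 - \dim((\lie{t}_{k+1})_p)$, whereas you apply (\ref{Tuesone}) to the full $T$-moment map $\phi$, identify the image as $(\lie{t}_p + \lie{t}_k)^{\perp}$, and then translate the condition $\xi_{k+1} \in \lie{t}_p + \lie{t}_k$ back into a statement about $(\lie{t}_{k+1})_p$ using $\lie{t}_k \cap \lie{t}_p = 0$. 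Your route is slightly longer but makes the role of the regularity hypothesis more transparent.
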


\begin{proof}
For $p \in M$ we have $$ \dim ( d_p f_k ( T_p \phi_k^{-1}(0))) = \dim ( d_p \phi_{k+1} ( T_p \phi_k^{-1}(0))) = \dim ( d_p \phi_{k+1} ( T_p M)) - k = 1 - \dim ( (\lie{t}_k)_p).$$ ere we have used Equality \ref{Tuesone}.
\end{proof}

It remains to prove that the critical points of $f_k:= \phi^{\xi_{k+1}}|_{\phi_k^{-1}(0)}$ are nondegenerate. According to \cite{GGK02}, if $M$ admits an invariant almost complex structure, then in the neighborhood of every orbit $M$ admits a symplectic structure for which the moment map is Hamiltonian. We can then use the following local canonical form \cite{GS82} for symplectic Hamiltonian actions.

\begin{lemma} (\cite{GS82})
Suppose that $M$ admits a $T$-invariant almost complex structure. For $p \in M$ choose a complimentary Lie subalgebra $\lie{h} \subset \lie{t}$ to $\lie{t}_p$ so that $ \lie{t}^* = \lie{h}^* \oplus \lie{t}_p^*$. Then for some $T_p$ representation $V$, there is a $T$-equivariant diffeomorphism from an invariant neighborhood of $p$ to an invariant neighborhood of the zero section of the associated bundle $ T \times_{T_p} ( \lie{h}^* \oplus V)$ sending the moment map $\phi$ to the map $$ \phi' : T \times_{T_p} ( \lie{h}^* \oplus V) \rightarrow  \lie{h}^* \oplus \lie{t}_p^* = \lie{t}^*$$ defined by $ \phi'(t, \eta, v) = (\eta, q(v))$, where $q: V \rightarrow \lie{t}_p^*$ is a quadratic form.
\end{lemma}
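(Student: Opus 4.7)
The plan is to reduce the statement to the classical Marle--Guillemin--Sternberg equivariant symplectic normal form. The hypothesis that $M$ admits a $T$-invariant almost complex structure, combined with the fact that $\phi$ is a nondegenerate abstract moment map, is precisely what allows one to import the symplectic machinery, as was made explicit in \cite{GGK02}.

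First I would invoke the result of \cite{GGK02} mentioned just before the statement: on some $T$-invariant neighborhood $U$ of the orbit $T\cdot p$ one can construct a $T$-invariant symplectic form $\omega$ for which the restriction of $\phi$ is an honest symplectic moment map. Concretely, one uses the invariant almost complex structure together with an invariant Riemannian metric to produce an invariant nondegenerate 2-form at $p$; the condition \eqref{Tuesone} from Appendix \ref{nondegabs} (or equivalently condition (1) of Definition \ref{absmom}) ensures that this 2-form is compatible with $d\phi$ at $p$, so after extending by $T$-invariance and applying an equivariant version of Moser's trick along the orbit, one obtains the desired $(\omega,\phi)$ on a possibly smaller invariant neighborhood.

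Next I would apply the classical equivariant symplectic slice theorem to the Hamiltonian $T$-space $(U,\omega,\phi)$. The tangent space at $p$ splits $T_p$-equivariantly as $T_p(T\cdot p)\oplus V$, where $V$ is a symplectic slice complementary to the orbit direction; the $T_p$-action on $V$ is linear and symplectic, so it admits a homogeneous quadratic moment map $q:V\to \lie{t}_p^*$. A choice of complementary subalgebra $\lie{h}\subset \lie{t}$ to $\lie{t}_p$ gives the splitting $\lie{t}^*=\lie{h}^*\oplus\lie{t}_p^*$ used in the statement, and the symplectic bundle $T\times_{T_p}(\lie{h}^*\oplus V)$ (equipped with the minimal coupling symplectic form, which is well defined in a neighborhood of the zero section) has moment map precisely $(t,\eta,v)\mapsto(\eta,q(v))$. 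An equivariant Moser argument then produces a $T$-equivariant symplectomorphism between invariant neighborhoods of $p\in U$ and of the zero section $[1,0,0]\in T\times_{T_p}(\lie{h}^*\oplus V)$ intertwining the two moment maps.

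The main obstacle is really Step 1 --- the existence of a compatible local symplectic form --- but this is covered by the cited result of \cite{GGK02}; once granted, Step 2 is the textbook Guillemin--Sternberg \cite{GS82} normal form. I would therefore keep the writeup short: cite \cite{GGK02} to obtain $(\omega,\phi)$ and then \cite{GS82} to conclude, indicating only enough detail of the slice construction to identify the quadratic form $q$ with the moment map of the symplectic slice representation.
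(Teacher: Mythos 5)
Your proposal is correct and follows the same route as the paper: the paper also reduces to the classical Marle--Guillemin--Sternberg normal form by first invoking the result of \cite{GGK02} that an invariant almost complex structure together with a nondegenerate abstract moment map produces, near each orbit, a $T$-invariant symplectic form for which $\phi$ is an honest moment map, and then applies \cite{GS82} verbatim. The only difference is that the paper states the lemma as a bare citation while you sketch the intermediate steps; both are the same argument.
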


Applying this to the case $T = T_{k+1}$, $\phi = \phi_{k+1}$, $\lie{h} = \lie{t}_k$ at a critical point $p$ of $f_k$, we obtain a local model for $\phi_{k+1}$ near $p$ in $M$ $$ \phi': T_{k+1} \times_{(T_{k+1})_p} ( \lie{t}_k^* \oplus V) \rightarrow \lie{t}_k^* \oplus \R = \lie{t}_{k+1}^*$$ where $\phi'( t, \eta, v) = \eta + q(v)$. Here a neighborhood of $p$ in $\mu_k^{-1}(0)$ maps to $ T_{k+1} \times_{(T_{k+1})_p} ( \{0\}\oplus V)$ and $f_k$ corresponds (up to a nonzero scalar multiple) to the quadratic form $q$. Thus in some local coordinates, $f_k$ looks like a quadratic form near $p$ in $\phi^{-1}_k(0)$ and hence is nondegenerate, completing the proof of Lemma \ref{immunology}.

\section{ Further aspects of twisted equivariant cohomology}\label{apptwis}

We call a $G$-invariant open cover $\{ U_{\alpha}| \alpha \in I \}$ of $M$ equivariantly good if all nonempty intersections of the $U_{\alpha}$ are tubular neighborhoods of some $G$-orbit in $M$. For example, every $G$-equivariant vector bundle over a compact manifold $M$ admits a \emph{finite} equivariantly good cover.

\begin{lemma}\label{fouture}
Let $M$ be a $G$-manifold admitting a finite equivariantly good open cover $\{ U_{\alpha}| \alpha \in I \}$. Then for all $J \subset I$, and for any twisting $d_G$-closed 3-form $\eta \in \Omega_G^3(M)$ we have an isomorphism of $H_G(M)$-modules
$$ H_G( \cap_{\alpha \in J} U_{\alpha} ; \eta) \cong H(BH)$$
for some closed subgroup $H \subset G$.
\end{lemma}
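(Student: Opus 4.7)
The plan is to use equivariant homotopy invariance to reduce the computation to a single orbit $\mathcal{O} \cong G/H$, and then to trivialize the $\eta$-twist using the vanishing of $H^3(BH;\R)$.

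First, by the definition of an equivariantly good cover, $N := \cap_{\alpha \in J}U_\alpha$ is a $G$-invariant tubular neighborhood of some orbit $\mathcal{O} \cong G/H$, where $H \subset G$ is the stabilizer of a chosen basepoint in $\mathcal{O}$. The inclusion $i: \mathcal{O} \hookrightarrow N$ admits a $G$-equivariant deformation retraction, so the pullback $i^*: \Omega_G(N) \to \Omega_G(\mathcal{O})$ is a quasi-isomorphism. Regarding both complexes as $(\Omega_G(M), d_G)$-modules via restriction from $M$, the map $i^*$ is a module map, and so Proposition \ref{doubledip} applied to the twist $\eta \in \Omega_G^3(M)$ yields an $H_G(M)$-module isomorphism
$$ H_G(N; \eta) \;\cong\; H_G(\mathcal{O}; i^*\eta). $$

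Second, I would exploit the identification $H^*_G(\mathcal{O};\R) \cong H^*(BH;\R)$, together with the standard fact that for any compact Lie group $H$ the real cohomology $H^*(BH;\R)$ is concentrated in even degrees (it is the Weyl-invariants of a polynomial algebra on even generators when $H$ is connected, and is then further averaged over $\pi_0(H)$ in general). In particular $H^3_G(\mathcal{O}) = 0$, so the $d_G$-closed form $i^*\eta \in \Omega^3_G(\mathcal{O})$ is $d_G$-exact: we may write $i^*\eta = d_G \beta$ for some $\beta \in \Omega^2_G(\mathcal{O})$.

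Third, Lemma \ref{carumba} then tells us that wedging with $\exp(\beta)$ is an isomorphism of chain complexes $(\hat\Omega_G(\mathcal{O}), d_G) \to (\hat\Omega_G(\mathcal{O}), d_{G,i^*\eta})$. Since $\beta$ has even degree and the wedge product of de Rham forms is graded-commutative, $\exp(\beta)\wedge(\cdot)$ commutes with multiplication by classes pulled back from $\Omega_G(M)$, so this is an $H_G(M)$-module isomorphism. Composing with the standard identification $H_G(\mathcal{O}) \cong H(BH)$ completes the proof.

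The main conceptual ingredient is the second step, namely invoking the vanishing $H^3_G(\mathcal{O}) = 0$ to kill the twist; the first and third steps are then formal consequences of results already established in the paper. The only bookkeeping to watch is that the pullback $i^*$ and the multiplication $\exp(\beta)\wedge(\cdot)$ are genuinely $\Omega_G(M)$-module maps, but both are immediate from the definitions.
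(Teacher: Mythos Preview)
Your proof is correct and follows essentially the same approach as the paper: reduce to an orbit $G/H$ by equivariant homotopy invariance, invoke $H^3(BH)=0$ to make the twist exact, and then untwist via Lemma~\ref{carumba}. Your version is more detailed, in particular in checking that the relevant maps are $\Omega_G(M)$-module maps so that the resulting isomorphism is one of $H_G(M)$-modules, a point the paper leaves implicit.
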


\begin{proof}
By definition we know that $ \cap_{\alpha \in J} U_{\alpha} $ is equivariantly homotopy equivalent to a homogeneous space $G/H$ for some subgroup $H \subset G$. Thus $\eta$, $H_G( \cap_{\alpha \in J} U_{\alpha} ) \cong H(G/H)$. But it is a standard result that $H_G(G/H) \cong H(BH)$ and $H^3(BH) = 0$ (c.f. \cite{AB84}).  Thus $\eta$ is cohomologous to zero. It follows that

$$ H_G( \cap_{\alpha \in J} U_{\alpha}; \eta )  \cong H_G( \cap_{\alpha \in J} U_{\alpha}) \cong H(BH) $$
\end{proof}

Lemma \ref{fouture} can be used to prove twisted equivariant cohomology results using Mayer-Vietoris.

\begin{proposition}
Let $M$ be a smooth $G$-manifold admitting a finite, equivariantly good open cover. Then for any twisting $\eta$, $H_G(M ;\eta)$ is a finitely generated $(\hat{S}\lie{g}^*)^G$-module.
\end{proposition}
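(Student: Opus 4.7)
The plan is to induct on the cardinality $n$ of an equivariantly good cover $\{U_1,\ldots,U_n\}$ of $M$, using the twisted Mayer--Vietoris six term exact sequence and the fact that $(\hat S\lie{g}^*)^G$ is a Noetherian ring (being a quotient, by a compact group action, of a formal power series ring in finitely many variables over $\R$). Over a Noetherian ring, kernels, cokernels and extensions of finitely generated modules are finitely generated, so this property propagates through any six term exact sequence.

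For the base case $n=1$, every $U=U_1$ is by hypothesis a tubular neighborhood of a $G$-orbit, and Lemma \ref{fouture} gives an isomorphism of $H_G(M)$-modules $H_G(U;\eta)\cong H(BH)$ for some closed subgroup $H\subset G$. It remains to verify that $H(BH)$ is a finitely generated $H(BG)\cong(\hat S\lie{g}^*)^G$-module. This follows from the Serre spectral sequence of the fibration $G/H\to BH\to BG$: since $G/H$ is a compact manifold, $H^*(G/H)$ is finite dimensional, so the $E_2$ page $H^*(BG;H^*(G/H))$ is finitely generated over $H^*(BG)$, and, by Noetherianity, so is the abutment $H^*(BH)$ (and, after completion at the augmentation ideal, so is the completed version).

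For the inductive step, set $U=U_1$ and $V=U_2\cup\cdots\cup U_n$. Then $\{U_i\}_{i\ge 2}$ is an equivariantly good cover of $V$ of size $n-1$, and $\{U_1\cap U_i\}_{i\ge 2}$ is an equivariantly good cover of $U\cap V$ of size $n-1$, so by the inductive hypothesis $H_G(V;\eta|_V)$ and $H_G(U\cap V;\eta|_{U\cap V})$ are finitely generated, as is $H_G(U;\eta|_U)$ by the base case. The short exact sequence of $\Omega_G(M)$-modules
\[ 0\longrightarrow \hat\Omega_G(M)\longrightarrow \hat\Omega_G(U)\oplus\hat\Omega_G(V)\longrightarrow \hat\Omega_G(U\cap V)\longrightarrow 0 \]
is still exact after passing to the direct product complexes, and twisting by $\eta$ produces a $\Z_2$-graded six term exact sequence in $\eta$-twisted equivariant cohomology by the general principle recalled in Section \ref{t-eq-cohomology}. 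Reading $H_G(M;\eta)$ off this sequence as an extension of a sub-$(\hat S\lie{g}^*)^G$-module (a cokernel of a map between finitely generated modules) by a quotient (a kernel of the next map), Noetherianity concludes the argument.

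The one step that requires genuine input beyond bookkeeping is the base case finite generation of $H(BH)$ over $H(BG)$; everything else is formal once Mayer--Vietoris is known to hold on $\hat\Omega_G$ and the ring $(\hat S\lie{g}^*)^G$ is recognized as Noetherian. I do not anticipate any obstruction in carrying out the induction, but one should double-check compatibility of the $H_G(M)$-module structure on each $H_G(U_\alpha;\eta)$ in Lemma \ref{fouture} with its intrinsic $(\hat S\lie{g}^*)^G$-module structure, so that the Mayer--Vietoris maps are indeed $(\hat S\lie{g}^*)^G$-linear.
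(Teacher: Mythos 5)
Your proof follows the same route as the paper's: reduce to the base case $H_G(U_\alpha;\eta)\cong H(BH)$ via Lemma \ref{fouture}, note that $H(BH)$ is finitely generated over $H(BG)\cong(\hat S\lie{g}^*)^G$, and propagate finite generation through Mayer--Vietoris using the Noetherian property of $(\hat S\lie{g}^*)^G$ (this is exactly what the paper's appeal to Lemma \ref{shex} accomplishes). You supply somewhat more detail in the base case (the Serre spectral sequence of $G/H\to BH\to BG$) where the paper simply cites the standard fact, but the argument is essentially identical.
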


\begin{proof}
For any closed subgroup $H \subset G$, $H(BH)$ is finitely generated over $H(BG) \cong (\hat{S}\lie{g}^*)^G$ (the number of generators is bounded by the Weyl group). Then $H_G(M ;\eta)$ is shown to be finitely generated by repeated application of Mayer-Vietoris and Lemma \ref{shex}.
\end{proof}

There a couple of different versions of twisted equivariant cohomology described in the literature and we take a moment to reconcile them. In \cite{FHT02}, it is defined as the cohomology of the complex of formal Laurent series $ \Omega_G^*(M)((\beta))$  where $\beta$ has degree $-2$ with differential $ d + \eta \beta$. This makes  $ \Omega_G^*(M)((\beta))$ into a graded complex, producing $\Z$-graded cohomology $\tilde{H}_G^*(M;\eta)$. The reader can readily verify that $ \sum_{k} \alpha^{n+2k} \otimes \beta^k \in \Omega^n_G(M)((\beta))$ is closed (exact) if and only if $ \sum_k \alpha^{n+k} \in \hat{\Omega}_G(M)$ is closed (exact) for $d_G + \eta$. It follows that $$\tilde{H}_G^n(M) \cong H_G^{[n]}(M)$$
where on the $n$ is an integer and $[n]$ is its reduction mod $2$.

Another version comes from \cite{HuUribe06}. They define $\bar{H}_G(M;\eta)$ to be the cohomology of the complex $ (\Omega(M) \otimes \hat{S} \lie{g}^*)$ with differential $ d_G + \eta \wedge$. There is a natural injective chain map

$$ (\Omega(M) \otimes \hat{S} \lie{g}^*) \hookrightarrow \hat{\Omega}_G(M)$$
which induces an isomorphism in cohomology in most interesting cases:

\begin{proposition}\label{tousecof}
Let $M$ be a smooth $G$-manifold admitting a finite, equivariantly good open cover. Then $\bar{H}_G(M) \cong H_G(M)$.
\end{proposition}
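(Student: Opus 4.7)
The plan is to induct on the size of a finite equivariantly good cover $\{U_\alpha\}_{\alpha \in I}$ of $M$, using a Mayer--Vietoris argument. First I would verify that the classical Mayer--Vietoris short exact sequence
\[0 \to \Omega(U_1 \cup U_2) \to \Omega(U_1) \oplus \Omega(U_2) \to \Omega(U_1 \cap U_2) \to 0\]
survives all the operations used to build both complexes: tensoring with $\hat{S}\lie{g}^*$ (flatness over $\R$), forming the direct product $\hat{\Omega}_G = \prod_n \Omega_G^n$ (products are exact), and passing to $G$-invariants, which is exact by compactness of $G$ via Haar averaging. The natural inclusion $\iota: (\Omega(\cdot) \otimes \hat{S}\lie{g}^*)^G \hookrightarrow \hat{\Omega}_G(\cdot)$ is a chain map compatible with restrictions, so it produces a commutative ladder of six-term exact sequences to which the Five Lemma applies.

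Given a cover of size $n$, I would write $M = U \cup V$ with $V = U_{\alpha_0}$ a single tubular neighborhood and $U = \bigcup_{\alpha \neq \alpha_0} U_\alpha$. Both $U$ and $U \cap V$ inherit equivariantly good covers of size at most $n-1$, so the inductive hypothesis applied at $U$, $V$, and $U \cap V$, combined with the Five Lemma, handles the step and reduces everything to the base case $|I| = 1$.

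In the base case $M$ is equivariantly diffeomorphic to a tubular neighborhood of a single $G$-orbit, and hence equivariantly homotopy equivalent to a homogeneous space $G/H$ for some closed subgroup $H \subset G$. Both $\bar{H}_G$ and $H_G$ are equivariantly homotopy invariant (verified by extending the standard chain homotopy for de Rham cohomology $\hat{S}\lie{g}^*$-linearly through both constructions), reducing the base case to $M = G/H$. Here the Haar averaging operator retracts $\Omega(G/H)$ onto its finite-dimensional $G$-invariant subcomplex $\Omega(G/H)^G$. The decisive point is that on this finite-dimensional subcomplex, the algebraic tensor product $\Omega(G/H)^G \otimes \hat{S}\lie{g}^*$ \emph{coincides} with $\prod_n$ of its bigraded pieces, so $\iota$ restricts to an honest identity on the retract; tracking this through the induced retractions on $(\Omega(G/H) \otimes \hat{S}\lie{g}^*)^G$ and $\hat{\Omega}_G(G/H)$ yields $\iota$ as a quasi-isomorphism on $G/H$.

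The main obstacle will be the base case. The algebraic tensor product $\Omega(M) \otimes \hat{S}\lie{g}^*$ is strictly smaller than the product $\prod_n \bigl(\Omega(M) \otimes S^n\lie{g}^*\bigr)$ whenever $\Omega(M)$ is infinite-dimensional, so $\iota$ fails to be surjective before taking $G$-invariants, and it is precisely the passage to $G$-invariants on $G/H$ (where the invariant forms are finite-dimensional) that closes the gap. The hypothesis of a finite equivariantly good cover is exactly what permits this reduction to homogeneous spaces where the two tensor-product constructions agree.
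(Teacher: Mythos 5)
Your overall strategy --- a Mayer--Vietoris induction over a finite equivariantly good cover, with the base case handled by reducing to a homogeneous space $G/H$ --- is exactly the paper's approach, and your treatment of the inductive step (the short exact sequence surviving tensoring, products, and $G$-invariants; the Five Lemma; the observation that $U$, $V$, $U\cap V$ inherit smaller good covers) is sound. The paper is in fact more terse than you at this point, so your elaboration is welcome.

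However, your base case argument has a gap when $G$ is non-abelian, and the proposition is stated for general compact $G$. You propose to retract $\Omega(G/H)$ onto $\Omega(G/H)^G$ via Haar averaging and to ``track this through'' to retractions of the Cartan complexes $(\Omega(G/H)\otimes \hat{S}\lie{g}^*)^G$ and $\hat{\Omega}_G(G/H)$ onto the finite-dimensional $\Omega(G/H)^G \otimes (\hat{S}\lie{g}^*)^G$. The problem is that $\Omega(G/H)^G \otimes (\hat{S}\lie{g}^*)^G$ is \emph{not} a subcomplex of the Cartan model for the differential $d_G = d - \iota$: for $\omega \in \Omega(G/H)^G$ one has $g^*(\iota_{\xi_M}\omega) = \iota_{(\Ad_{g^{-1}}\xi)_M}\omega$, so $\iota_{\xi_M}\omega$ is $G$-invariant only when $\xi$ is central. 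Consequently the averaging map does not intertwine $d_G$, and there is no ``induced retraction'' of Cartan complexes to track through. (Separately, for non-abelian $G$ the diagonal action on $\Omega(G/H)\otimes S^k\lie{g}^*$ mixes infinitely many irreducible summands of $\Omega(G/H)$ as $k$ grows, so even as graded vector spaces the containment $\bar{\Omega}_G(G/H) \subset \hat{\Omega}_G(G/H)$ is not an equality, contrary to the spirit of your ``honest identity'' claim.) For a torus $T$ both of these objections vanish, since $\Ad$ is trivial and $(\Omega(M)\otimes S\lie{t}^*)^T = \Omega(M)^T \otimes S\lie{t}^*$, and your argument is then correct.

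To repair the general case you would either invoke the Weyl-invariant reduction $H_G(M) \cong H_T(M)^W$ (which the paper proves, and which is compatible with both complexes) so as to fall back on the torus case, or else argue directly as the paper hints: since $d_G$ is homogeneous of degree $1$ on the untwisted complex, $H(\hat{\Omega}_G(G/H)) \cong \prod_n H^n_G(G/H) = \prod_n H^n(BH)$, and the same conclusion for $\bar{H}_G(G/H)$ follows from the identification $\bar{\Omega}_G = \Omega_G \otimes_{(S\lie{g}^*)^G} (\hat{S}\lie{g}^*)^G$ together with flatness of the completion and the fact that $H(BH)$ is a finitely generated $(S\lie{g}^*)^G$-module, so that extension of scalars computes the $I$-adic completion $\prod_n H^n(BH)$.
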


 \begin{proof}
Using Equation \ref{homprod}, it is easy to prove the isomorphism holds on tubular neighborhoods of $G$-orbits. This extends to $M$ by repeated application of Mayer-Vietoris.
 \end{proof}

This result allows us to compare $H_G(M;\eta)$ with the cohomology of the direct sum complex $( \Omega_G(M) = \bigoplus_i \Omega_G^i(M), d_G + \eta \wedge )$.

\begin{proposition}
For $M$ admitting a finite equivariantly good cover, $H_G(M; \eta)$ is canonically isomorphic to $ H( \Omega_G(M), d_G + \eta) \otimes_{(S \lie{g}^*)^G} (\hat{S}\lie{g}^*)^G$ (i.e. it is obtained by extension of scalars).
\end{proposition}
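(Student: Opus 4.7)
The strategy rests on two ingredients: flatness of $(\hat S\lie{g}^*)^G$ over $R := (S\lie{g}^*)^G$, and a chain-level quasi-isomorphism
$$\Phi_M : \Omega_G(M)\otimes_{R}(\hat S\lie{g}^*)^G \longrightarrow \hat\Omega_G(M)$$
induced by the inclusion $\Omega_G(M)\hookrightarrow\hat\Omega_G(M)$ together with the $(\hat S\lie{g}^*)^G$-action on the target. The flatness is immediate: by Chevalley's theorem $R$ is a polynomial ring on finitely many generators, so the inclusion $R\hookrightarrow(\hat S\lie{g}^*)^G$ is the completion of a Noetherian ring at a maximal ideal, hence flat. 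Granting that $\Phi_M$ is a quasi-isomorphism, flatness then yields
$$ H_G(M;\eta)\;=\;H(\hat\Omega_G(M),d_G+\eta)\;\cong\;H\bigl(\Omega_G(M)\otimes_R(\hat S\lie{g}^*)^G,\,d_G+\eta\bigr)\;\cong\;H(\Omega_G(M),d_G+\eta)\otimes_R(\hat S\lie{g}^*)^G,$$
which is the statement.

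To prove $\Phi_M$ is a quasi-isomorphism I would induct on the size $n$ of a finite equivariantly good cover $\{U_1,\dots,U_n\}$ of $M$. For $n=1$, each $U_i$ is equivariantly homotopy equivalent to an orbit $G/H$, so by Proposition \ref{doubledip} it suffices to check $\Phi_{G/H}$. On $G/H$ the standard Cartan-model computation (already invoked in Lemma \ref{fouture}) gives $H(\Omega_G(G/H),d_G)\cong (S\lie{h}^*)^H$ and $H(\hat\Omega_G(G/H),d_G)\cong (\hat S\lie{h}^*)^H$ as $R$-modules. Since $S\lie{h}^*$ is a quotient of $S\lie{g}^*$, the invariant ring $(S\lie{h}^*)^H$ is finitely generated over $R$, and the $R_+$-adic topology agrees with the $(S\lie{h}^*)^H_+$-adic one (they define the same ideal up to radical); hence $(S\lie{h}^*)^H\otimes_R(\hat S\lie{g}^*)^G\cong (\hat S\lie{h}^*)^H$, which settles the untwisted case on $G/H$. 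The twist is then dealt with by observing that $H_G^3(G/H)\cong H^3(BH)=0$, so $\eta|_{G/H}=d_G b$ for some $b\in\Omega_G^2(G/H)$, and Lemma \ref{carumba} provides a chain isomorphism $\exp(b)\wedge(\cdot):(\hat\Omega_G(G/H),d_G)\to(\hat\Omega_G(G/H),d_G+\eta)$, which restricts to a chain isomorphism on the extended complex $\Omega_G(G/H)\otimes_R(\hat S\lie{g}^*)^G$ where the formal sum defining $\exp(b)$ converges.

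For the inductive step, decompose $M=U\cup V$ with $U=U_1\cup\cdots\cup U_{n-1}$ and $V=U_n$. The usual Mayer--Vietoris short exact sequence of $R$-modules
$$0\to \Omega_G(U\cup V)\to \Omega_G(U)\oplus\Omega_G(V)\to \Omega_G(U\cap V)\to 0$$
remains exact after tensoring with $(\hat S\lie{g}^*)^G$ by flatness, and the analogous sequence for $\hat\Omega_G$ is exact because direct products preserve exactness. The natural map $\Phi$ gives a morphism of short exact sequences; by induction $\Phi_U$, $\Phi_V$, and $\Phi_{U\cap V}$ are quasi-isomorphisms (their respective covers have fewer than $n$ members), so the five-lemma applied to the associated long exact sequences shows that $\Phi_M$ is a quasi-isomorphism as well.

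The main obstacle is the base case on $G/H$: the chain isomorphism $\exp(b)\wedge(\cdot)$ is genuinely an infinite series in the $\lie{g}^*$-direction and a priori lives only in $\hat\Omega_G$, so one must verify carefully that it descends to a chain-level isomorphism on $\Omega_G(G/H)\otimes_R(\hat S\lie{g}^*)^G$ rather than merely a cohomological one. Once this convergence-in-the-extended-complex point is handled, the remainder of the argument is a routine Mayer--Vietoris bootstrap.
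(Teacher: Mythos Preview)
Your approach is essentially the paper's: the paper identifies your intermediate complex $\Omega_G(M)\otimes_R(\hat S\lie{g}^*)^G$ with the Hu--Uribe complex $(\Omega(M)\otimes\hat S\lie{g}^*)^G$, invokes Proposition~\ref{tousecof} (proved by the same Mayer--Vietoris induction you describe) for the quasi-isomorphism with $\hat\Omega_G(M)$, and then applies flatness exactly as you do. Your flagged obstacle dissolves once you make this identification, since in $(\Omega(M)\otimes\hat S\lie{g}^*)^G$ the form-degree is bounded by $\dim M$ and the series $\exp(b)$ converges in the $\hat S\lie{g}^*$ factor; so there is no gap, only a slightly roundabout packaging of the base case.
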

\begin{proof}
By Proposition \ref{tousecof} $H_G(M;\eta) \cong \bar{H}_G(M ;\eta)$. The formal power series ring $ \hat{S} \lie{g}^*$ is flat over $S \lie{g}$ so it factors through taking cohomology (see Lemma \ref{b6}). Thus $$ \bar{H}_G(M;\eta) = H( \Omega_G(M) \otimes_{(S \lie{g}^*)^G} (\hat{S} \lie{g}^*)^G; d + \eta ) = H( \Omega_G(M) ; d + \eta )\otimes_{(S \lie{g}^*)^G} (\hat{S} \lie{g}^*)^G.$$
 \end{proof}

Recall (\cite{Mc01} Def. 3.8) that given a filtration $ \{ F^p A\}$ of a differential complex $(A,d)$, the associated spectral sequence $(E_r^p, d_r)$ is said to \textbf{converge strongly} to $H(A,d)$ if the induced maps $F^pH(A,d)/ F^{p+1}H(A,d) \rightarrow E_{\infty}^p$ and $ H(A,d) \rightarrow (\stackrel{lim}{\leftarrow} H(A,d)/F^pH(A,d)$ are isomorphisms. The first of these

\begin{proposition}\label{jiggsdiner}
Let $M$ admit a finite $G$-equivariantly good open cover. Then the spectral sequences associated to the filtrations $\{ F^p\}$ and $\{L^p\}$ of $(\hat{\Omega}_G(M), d_{G,\eta})$ described in \S \ref{Spectral Sequences} both converge strongly to $ H_G(M ;\eta)$.
\end{proposition}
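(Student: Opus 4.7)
The plan is to derive strong convergence from two ingredients: both filtrations are complete and Hausdorff (so that the spectral sequences are at least conditionally convergent), and the $E_1$-pages are finite-dimensional in each bidegree (so that the Mittag-Leffler condition on higher pages is automatic, upgrading conditional convergence to strong convergence).

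First, I would verify completeness and Hausdorffness of the $\{F^p\}$ filtration. By construction $\hat{\Omega}_G(M) = \prod_i \Omega_G^i(M)$, so any element lying in $F^p$ for every $p$ has zero $i$-th component for each $i$ and must vanish; this gives $\bigcap_p F^p = 0$. Moreover, the canonical map $\hat{\Omega}_G(M) \to \varprojlim_p \hat{\Omega}_G(M)/F^p$ is bijective, since the quotient is $\prod_{k<p}\Omega_G^k(M)$ and a coherent family of such tuples assembles uniquely to an element of $\prod_i \Omega_G^i(M)$. The identical accounting handles $\{L^p\}$, using $L^p = \prod_{k\geq p}(\Omega(M)\otimes S^k\lie{g}^*)^G$. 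Under these two conditions, Boardman's criterion yields conditional convergence of each spectral sequence to $H_G(M;\eta)$.

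Second, I would upgrade this to strong convergence by checking the Mittag-Leffler condition on the $E_r$-pages, namely that in each bidegree the descending tower $E_r^{p,q} \supseteq E_{r+1}^{p,q} \supseteq \cdots$ eventually stabilizes. The finite equivariantly good cover hypothesis is what makes this accessible. Indeed, by repeated application of Mayer-Vietoris together with Lemma \ref{fouture} (noting $H^*(BH)$ is finite-dimensional in each degree for any closed subgroup $H \subset G$), $H^*(M)$ is finite-dimensional in each degree. Combined with (\ref{egadyikes}), so is $H^*(M;\eta(0))$. For the $\{L^p\}$ spectral sequence this makes $E_1^{p,q} \cong (H^q(M;\eta(0)) \otimes S^p\lie{g}^*)^G$ finite-dimensional in each bidegree; for the $\{F^p\}$ spectral sequence $E_2^{p,q} \cong H_G^{p,q}(M) = (H^q(M)\otimes S^{(p-q)/2}\lie{g}^*)^G$ (in the appropriate bidegree indexing) is likewise finite-dimensional. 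Each subsequent $E_r^{p,q}$ is a subquotient of a finite-dimensional vector space, so the tower must stabilize after finitely many pages, giving Mittag-Leffler.

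The main delicate point is the finite-dimensionality argument, which is where the finite equivariantly good cover hypothesis enters essentially: without it neither $H^*(M)$ nor $H^*(M;\eta(0))$ need be degreewise finite-dimensional, and the $E_r^{p,q}$ towers could in principle fail to stabilize. Everything else is a matter of unwinding definitions and invoking Boardman's standard criterion that conditional convergence plus Mittag-Leffler implies strong convergence.
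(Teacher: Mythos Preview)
Your argument is correct and follows essentially the approach of Freed--Hopkins--Teleman \cite{FHT02}, which the paper explicitly acknowledges in the sentence immediately after its proof as an alternative ``using a Mittag-Lefler condition.'' The paper's own route is different: rather than invoking Boardman's criterion and degreewise finite-dimensionality of the $E_1$-page, it verifies the completeness condition $H_G(M;\eta) \cong \varprojlim_p H_G(M;\eta)/F^pH_G(M;\eta)$ directly, first for $\eta = 0$ (where the induced filtration on cohomology is the obvious degree filtration), then for exact $\eta$ via the $\exp(b)$ isomorphism of Lemma~\ref{carumba} (which preserves $\{F^p\}$), and finally for general $\eta$ by Mayer--Vietoris and the five lemma (since $\eta$ restricts to an exact form on each member of the good cover); the $\{L^p\}$ case is reduced to $\{F^p\}$ by cofinality, $F^{2p-n} \subset L^p \subset F^{2p+n}$. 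Your approach is more conceptual and, as the paper concedes, more general; theirs is more self-contained and avoids derived limits. One small imprecision in your write-up: Lemma~\ref{fouture} concerns twisted equivariant cohomology, not $H^*(M)$; for the $\{L^p\}$ sequence you actually want $\dim H(M) < \infty$, which follows from ordinary Mayer--Vietoris using that each $\cap_{\alpha \in J} U_\alpha \simeq G/H$ is a compact manifold.
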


\begin{proof}
First note that the filtrations are cofinal, satisfying

$$ F^{2p-n} \subset L^p \subset F^{2p+n}, $$
so convergence of the spectral sequence determined by $\{F^p\}$ implies convergence of that for $\{ L^p\}$.

Clearly $\cup_p F^p = \hat{\Omega}_G(M) and \cap_p F^p = 0$ so the filtration is exhaustive and weakly convergent. By ( \cite{Mc01} Thm 3.2) it only remains to show that we have an isomorphism
\begin{equation}
\label{completeness} H_G(M;\eta) \cong \stackrel{lim}{\leftarrow} H_G(M;\eta)/F^pH_G(M;\eta).
\end{equation}

First note that if $\eta =0$, then the filtration $F^p H_G(M) = \prod_{k \geq p} H_G^k(M)$ so (\ref{completeness}) certainly holds. Next, note that wedging by $exp(b)$ for any $ b \in \Omega_G^2(M)$ preserves the filtration $F^p$, so by Lemma \ref{carumba}, (\ref{completeness}) must hold for exact $\eta$. Finally, because $\eta$ must become exact when restricted to tubular neighborhoods of orbits, we may use Mayer-Vietoris and the five lemma to prove the general case.
\end{proof}

Proposition \ref{jiggsdiner} was proven with more general hypotheses in \cite{FHT02} using a Mittag-Lefler condition. We include our own proof because it is more elementary.

\section{Commutative algebra}\label{commalg}

The results in this section are standard. Our principal reference is Matsumura \cite{Mat89} Chapter 8.

Recall that a module is Noetherian is it satisfies the ascending chain condition. A ring is Noetherian if it is Noetherian over itself. Throughout, $R$ will denote a Noetherian, commutative integral domain.

\begin{lemma}
 Let $M$ be an $R$-module. The following are equivalent:\\
(1) $M$ is Noetherian\\
(2) $M$ is finitely generated\\
(3) $M$ has a finite presentation
\end{lemma}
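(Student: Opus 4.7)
The plan is to prove the three implications (1)$\Rightarrow$(2)$\Rightarrow$(3)$\Rightarrow$(1) in a cycle, with the Noetherian hypothesis on $R$ entering decisively when we need to conclude that submodules of $R^n$ are finitely generated.

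For (1)$\Rightarrow$(2), I would argue by contradiction: if $M$ is not finitely generated, pick any $m_1 \in M$, then choose $m_2 \in M \setminus \langle m_1 \rangle$, and inductively $m_{k+1} \in M \setminus \langle m_1, \dots, m_k \rangle$. The chain $\langle m_1 \rangle \subsetneq \langle m_1, m_2 \rangle \subsetneq \cdots$ is strictly ascending, contradicting the ACC.

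For (2)$\Rightarrow$(3), suppose $M$ is generated by $m_1, \dots, m_n$, giving a surjection $\pi : R^n \twoheadrightarrow M$. The submodule $\ker(\pi) \subseteq R^n$ must be shown to be finitely generated. This is where the Noetherian hypothesis on $R$ is used: since $R$ is Noetherian over itself, a straightforward induction on $n$ (using short exact sequences $0 \to R^{n-1} \to R^n \to R \to 0$ and the fact that the class of Noetherian modules is closed under extensions) shows $R^n$ is a Noetherian $R$-module. Hence every submodule of $R^n$, in particular $\ker(\pi)$, is itself Noetherian, and applying the already-proven implication (1)$\Rightarrow$(2) to $\ker(\pi)$ yields a finite generating set. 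This produces a finite presentation $R^m \to R^n \to M \to 0$.

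For (3)$\Rightarrow$(1), the existence of a presentation gives $M \cong R^n / K$ for some submodule $K \subseteq R^n$. Since $R^n$ is Noetherian (as noted above), and quotients of Noetherian modules are Noetherian (any ascending chain in $M$ lifts to an ascending chain in $R^n$ via the preimage, which must stabilize), we conclude that $M$ is Noetherian.

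The main technical input throughout is the passage from ``$R$ is Noetherian'' to ``$R^n$ is Noetherian,'' established via the short exact sequence and the two-out-of-three property for the Noetherian condition on the ends of a short exact sequence; everything else is formal. No step presents a genuine obstacle since the lemma is classical, and finiteness of presentation in (3) is merely the conjunction of ``$M$ is a quotient of a finite free module'' with ``the kernel is finitely generated,'' both of which emerge immediately from the Noetherian property of $R^n$.
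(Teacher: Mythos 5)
Your proof is correct and is the standard textbook argument; the paper itself gives no proof of this lemma, simply citing Matsumura's \emph{Commutative Ring Theory} as the reference for this appendix, and your three-step cycle (with the Noetherian hypothesis on $R$ entering through the fact that $R^n$ is Noetherian, established by induction on short exact sequences) is exactly the argument one finds there.
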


\begin{lemma}\label{shex}
For a short exact sequence of $R$-modules $$0 \rightarrow K \rightarrow M \rightarrow N \rightarrow 0$$ $M$ is Noetherian if and only if both $K$ and $N$ are Noetherian.
\end{lemma}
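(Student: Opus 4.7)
The plan is to prove the two directions of the equivalence separately, using the ascending chain condition (ACC) characterization of Noetherian modules.

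For the forward direction, suppose $M$ is Noetherian. I would first handle the submodule $K$: any ascending chain of submodules $K_1 \subset K_2 \subset \cdots$ in $K$ is also an ascending chain in $M$, hence stabilizes. For the quotient $N$, I would use the lattice correspondence between submodules of $N$ and submodules of $M$ containing $K$: an ascending chain in $N$ pulls back to an ascending chain in $M$ (each term containing $K$), which stabilizes, and then pushing forward shows the original chain in $N$ stabilizes.

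For the reverse direction, suppose $K$ and $N$ are Noetherian. Let $\pi: M \to N$ denote the surjection. Given an ascending chain $M_1 \subset M_2 \subset \cdots$ of submodules of $M$, I would form two auxiliary chains: the chain of intersections $M_i \cap K$ in $K$, and the chain of images $\pi(M_i)$ in $N$. Both stabilize, say beyond index $n$. The main (mild) obstacle is to conclude from stabilization of these two chains that $M_i = M_{i+1}$ for $i \geq n$; this is a short diagram chase. Given $x \in M_{i+1}$, since $\pi(M_i) = \pi(M_{i+1})$ there exists $y \in M_i$ with $\pi(y) = \pi(x)$, so $x - y \in K \cap M_{i+1} = K \cap M_i \subset M_i$, giving $x \in M_i$.

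This completes both directions and establishes the equivalence. No use of the integral domain or Noetherian hypothesis on the ring $R$ is required for the lemma itself; the argument works over an arbitrary ring, and uses only the ACC definition.
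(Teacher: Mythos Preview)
Your proof is correct and is the standard argument for this classical result. The paper itself does not give a proof: this lemma appears in the commutative algebra appendix, where the authors state at the outset that ``the results in this section are standard'' and cite Matsumura \cite{Mat89} as a reference, so there is no paper proof to compare against. Your observation that neither the integral domain nor the Noetherian hypothesis on $R$ is needed is also correct.
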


An $R$-module $M$ is called flat if the functor $ M \otimes_R : Mod_R \mapsto Ab$ from $R$-modules to abelian groups is exact.

\begin{lemma}
For $R$ an Noetherian, integral domain, the following are flat over $R$:\\
(1) the quotient field of $R$.\\
(2) any $I$-adic completion of $R$, for $I \subset R$ an ideal.\\
(3) any projective module over $R$.
\end{lemma}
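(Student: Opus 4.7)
The plan is to handle the three parts separately, as they rely on quite different standard techniques.

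For part (1), I would observe that since $R$ is an integral domain, the set $S = R \setminus \{0\}$ is multiplicatively closed, and the quotient field is precisely the localization $K = S^{-1}R$. Then I would invoke the standard fact that localization is an exact functor: given any short exact sequence $0 \to A \to B \to C \to 0$ of $R$-modules, applying $S^{-1}R \otimes_R (\,\cdot\,) \cong S^{-1}(\,\cdot\,)$ produces a short exact sequence of $S^{-1}R$-modules, because $S^{-1}$ commutes with kernels, images, and cokernels. Hence $K$ is flat.

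For part (3), I would start with free modules. Tensor product distributes over direct sums, so $R^{\oplus I} \otimes_R (\,\cdot\,) \cong \bigoplus_I (\,\cdot\,)$, and a direct sum of exact sequences is exact, making free modules flat. A projective module $P$ is by definition a direct summand of a free module: $P \oplus Q \cong F$ for some free $F$. Given an exact sequence $0 \to A \to B \to C \to 0$, tensoring with $F$ keeps it exact, and this tensor product decomposes as the direct sum of the analogous sequences for $P$ and $Q$. Since a direct sum of sequences is exact if and only if each summand is exact, the $P$-tensored sequence must itself be exact, so $P$ is flat.

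Part (2) is the substantive step and the main obstacle. My plan is to invoke the Artin--Rees lemma, which requires $R$ Noetherian and $I$ an ideal: for any finitely generated $R$-module $M$ and submodule $N \subset M$, there exists $c \geq 0$ such that $I^n M \cap N = I^{n-c}(I^c M \cap N)$ for all $n \geq c$. The key consequence is that for a finitely generated $R$-module $M$, the $I$-adic completion $\hat M$ is canonically isomorphic to $\hat R \otimes_R M$, and this identification is exact on short exact sequences of finitely generated modules. To conclude flatness over a general (not necessarily finitely generated) module, I would then apply the local criterion of flatness, or equivalently, express an arbitrary $R$-module as a filtered colimit of finitely generated ones and use that tensor product commutes with colimits while exactness is preserved under filtered colimits. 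This reduces the problem to the finitely generated case handled by Artin--Rees.

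For brevity, since all three statements are entirely standard, I would cite Matsumura \cite{Mat89} Chapter 8 for the details, particularly for part (2), where a fully self-contained proof would require a substantial digression into the Artin--Rees lemma and its consequences.
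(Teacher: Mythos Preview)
Your proposal is correct, and in fact goes well beyond what the paper does: the paper gives no proof whatsoever for this lemma, stating at the outset of the appendix that ``The results in this section are standard. Our principal reference is Matsumura \cite{Mat89} Chapter 8'' and then simply listing the statements. Your sketch for each part is sound and is essentially the argument one finds in Matsumura---exactness of localization for (1), direct summands of free modules for (3), and Artin--Rees plus reduction to finitely generated modules for (2)---so your approach and the paper's (implicit) approach coincide, with yours being more explicit.
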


Recall that for $I \subset R$ an ideal and $M$ an $R$-module, the $I$-adic completion of $M$ is defined by the inverse limit: $$ \hat{M} = \stackrel{lim}{\leftarrow} M/ I^k M$$
We have $\hat{R}$ is a ring and $\hat{M}$ is a $\hat{R}$-module.

\begin{lemma}\label{b4}
If $M$ is finitely generated $R$-module, then there is a natural isomorphism of $\hat{R}$-modules: $$ \hat{M} \cong M \otimes_R \hat{R}$$
\end{lemma}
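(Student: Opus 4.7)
The plan is to construct the natural map explicitly, verify it is an isomorphism in the free case, and then reduce from the general case via a finite presentation, using right-exactness of both tensoring with $\hat{R}$ and $I$-adic completion on finitely generated modules.

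First I would define a natural $\hat{R}$-module homomorphism $\phi_M: M \otimes_R \hat{R} \to \hat{M}$ by the rule
\[\phi_M\bigl(m \otimes (r_k + I^k)\bigr) = (r_k m + I^k M),\]
extended $\hat{R}$-linearly. This is well-defined because if $r_k - r_{k+1} \in I^k$ then $(r_k - r_{k+1}) m \in I^k M$, so the image lies in the inverse limit $\hat{M}$. Naturality in $M$ is immediate, so $\phi$ defines a natural transformation between the two functors $M \mapsto M \otimes_R \hat{R}$ and $M \mapsto \hat{M}$ on the category of finitely generated $R$-modules.

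Next I would check the case $M = R$, where $\phi_R$ is the canonical isomorphism $R \otimes_R \hat{R} \cong \hat{R}$. Since both $(-) \otimes_R \hat{R}$ and $\widehat{(-)}$ commute with finite direct sums, it follows that $\phi_{R^n}$ is an isomorphism for every $n \geq 0$.

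For the general finitely generated $M$, use that $R$ is Noetherian to choose a finite presentation $R^m \xrightarrow{\psi} R^n \to M \to 0$. Tensoring with $\hat{R}$ is right-exact, yielding
\[R^m \otimes_R \hat{R} \to R^n \otimes_R \hat{R} \to M \otimes_R \hat{R} \to 0.\]
The key classical input is that $I$-adic completion, when restricted to finitely generated modules over a Noetherian ring, is an exact functor; in particular applying it to $\operatorname{im}\psi \hookrightarrow R^n \twoheadrightarrow M$ gives exactness of
\[\widehat{R^m} \to \widehat{R^n} \to \hat{M} \to 0.\]
Compatibility of $\phi$ with the maps in the presentation then produces a commutative diagram with exact rows, and the five-lemma (or a direct two-step diagram chase using surjectivity of $\phi_{R^m}$ and bijectivity of $\phi_{R^n}$) shows that $\phi_M$ is an isomorphism.

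The main obstacle is the exactness of $I$-adic completion on finitely generated modules over a Noetherian ring, which is not a formal consequence of the definition: it rests on the Artin--Rees lemma, ensuring that the $I$-adic topology on a submodule coincides with the subspace topology inherited from the ambient module. Once this standard fact is invoked (or cited from \cite{Mat89}), the rest of the argument is a routine diagram chase.
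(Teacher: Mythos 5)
Your proof is correct. The paper does not actually prove this lemma itself --- it cites Matsumura, Chapter 8, for all results in that appendix --- and your argument is precisely the standard textbook proof the citation points to: construct the natural comparison map $\phi_M : M\otimes_R \hat R \to \hat M$, check it is an isomorphism on finite free modules, and reduce to that case via a finite presentation, using right-exactness of $-\otimes_R \hat R$ together with exactness of $I$-adic completion on finitely generated modules over a Noetherian ring (a consequence of Artin--Rees), which you correctly flag as the one nontrivial input.
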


Recall that a $R$-chain complex $(C,d)$ is an $R$-module $C$, equipped with a morphism $d: C \rightarrow C$ satisfying $d^2=0$. The homology is defined $H(C,d) = \ker d / \im d$ and is an $R$-module. Given a module $M$ over $R$, the tensor product $( C \otimes M, d \otimes 1)$ is a $\Z$-chain complex.

\begin{lemma}\label{b6}(ex. 7.6 in \cite{Mat89})
Let $(C,d)$ be an $R$-chain complex and let $M$ be flat over $R$. Then we have a natural isomorphism $H(C,d)\otimes_R M \cong H(C \otimes_R M, d \otimes_R 1)$.
\end{lemma}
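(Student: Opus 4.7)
The plan is to exploit the exactness of $-\otimes_R M$ (which is the defining property of flatness) to move the tensor product past the operations of taking kernels, images, and quotients that define homology. First I would decompose the complex $(C,d)$ into the two short exact sequences
\begin{equation*}
0 \to \ker d \longrightarrow C \xrightarrow{\;d\;} \im d \to 0, \qquad 0 \to \im d \longrightarrow \ker d \longrightarrow H(C,d) \to 0,
\end{equation*}
both of which are sequences of $R$-modules and $R$-module maps.

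Next I would apply the functor $-\otimes_R M$ to both sequences. By flatness of $M$, the resulting sequences
\begin{equation*}
0 \to \ker d \otimes_R M \longrightarrow C \otimes_R M \xrightarrow{\,d\otimes 1\,} \im d \otimes_R M \to 0
\end{equation*}
and
\begin{equation*}
0 \to \im d \otimes_R M \longrightarrow \ker d \otimes_R M \longrightarrow H(C,d) \otimes_R M \to 0
\end{equation*}
remain exact. The exactness of the first sequence furnishes a canonical identification $\ker(d \otimes 1) \cong \ker d \otimes_R M$, together with the identification of the image of $d \otimes 1$ in $C \otimes_R M$ with the image of the natural map $\im d \otimes_R M \to C \otimes_R M$ (which is itself injective by flatness applied to $\im d \hookrightarrow C$, so we may safely write $\im(d\otimes 1) = \im d \otimes_R M$).

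Combining these, I would identify
\begin{equation*}
H(C\otimes_R M, d\otimes 1) = \ker(d\otimes 1)/\im(d\otimes 1) \cong (\ker d \otimes_R M)/(\im d \otimes_R M),
\end{equation*}
and then read off from the second short exact sequence above that the right-hand side is canonically isomorphic to $H(C,d) \otimes_R M$. All maps in sight are induced by the inclusions and differentials of $C$ together with $\mathrm{id}_M$, so the resulting isomorphism is natural in $(C,d)$ and in $M$. I do not anticipate a serious obstacle: the only subtlety is being careful to use flatness a second time to conclude that $\im d \otimes_R M \hookrightarrow C \otimes_R M$ is genuinely an inclusion (so that the image of $d\otimes 1$ can be computed inside $C\otimes_R M$ as claimed), but this is automatic from the exactness of $-\otimes_R M$ applied to $0 \to \im d \to C$.
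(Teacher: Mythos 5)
Your proof is correct and is the standard argument. The paper itself does not give a proof of this lemma; it simply cites Matsumura (ex.\ 7.6), so there is nothing to compare against. Your argument is exactly what that exercise has in mind: break the complex into the two short exact sequences $0 \to \ker d \to C \to \im d \to 0$ and $0 \to \im d \to \ker d \to H(C,d) \to 0$, tensor both with the flat module $M$, and reassemble. You correctly flagged the one genuine subtlety --- that flatness is needed a second time to identify $\im(d\otimes 1)$ with $\im d\otimes_R M$ \emph{inside} $C\otimes_R M$, via the injectivity of $\im d\otimes_R M \to C\otimes_R M$; it is also this injectivity that lets you conclude $\ker(d\otimes 1) = \ker(p\otimes 1) = \ker d\otimes_R M$ after factoring $d = \iota\circ p$. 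Naturality is clear since every map in sight is built from the differential, the canonical inclusions, and $\mathrm{id}_M$.
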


The Jacobson radical is the ideal $$ J := \{ r \in R | 1 - rs\text{ is a unit in R for all }s \in S\}.$$

\begin{lemma} (Nakayama's Lemma)
Let $M$ be a nonzero finitely generated module over $R$ and $J$ the Jacobson radical of $R$. Then $JM \neq M$.
\end{lemma}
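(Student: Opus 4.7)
The plan is to prove this by the classical minimality argument. I would argue by contradiction: suppose $JM = M$, and choose a minimal generating set $m_1, \ldots, m_n$ of $M$ (possible because $M$ is finitely generated), where $n \geq 1$ since $M$ is nonzero. The goal is to derive a contradiction with minimality.

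First I would use the assumption $M = JM$ to write the last generator as
\[ m_n = \sum_{i=1}^{n} a_i m_i \]
with $a_i \in J$. Rearranging yields $(1 - a_n) m_n = \sum_{i=1}^{n-1} a_i m_i$. Next, I would invoke the characterization of the Jacobson radical: since $a_n \in J$, the element $1 - a_n$ is a unit in $R$ (applying the defining property of $J$ with $s = -1$ makes $1 - a_n \cdot (-1) \cdot (-1) = 1 - a_n$ a unit, which is the standard formulation). Multiplying by $(1-a_n)^{-1}$ expresses $m_n$ as an $R$-linear combination of $m_1, \ldots, m_{n-1}$.

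This shows $\{m_1, \ldots, m_{n-1}\}$ already generates $M$, contradicting minimality of $n$ when $n \geq 2$; in the case $n = 1$, one gets $m_1 = 0$, so $M = 0$, contradicting the hypothesis that $M$ is nonzero. Either way we reach a contradiction, so $JM \neq M$, completing the proof.

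There is no real obstacle here: the only subtlety is the invertibility of $1 - a_n$, which is immediate from the definition of the Jacobson radical as given in the paper, and the finite-generation hypothesis is used precisely to guarantee that a minimal generating set exists. The argument is purely formal and does not require any of the geometric or cohomological machinery developed earlier in the paper.
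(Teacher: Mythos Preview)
Your argument is the standard minimal-generating-set proof of Nakayama's Lemma and is correct; the paper itself does not supply a proof, merely stating the lemma as a known result and referring to \cite{We94} for the noncommutative version. One small cosmetic point: to see that $1-a_n$ is a unit you can simply take $s=1$ in the paper's definition of $J$, rather than the slightly tangled $s=-1$ manipulation you wrote.
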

We remark that Nakayama's Lemma works for not necessarily commutative rings (see 4.3.10  \cite{We94}) so we can apply it to super commutative rings like $H_T(M)$.

We have some particular examples in mind. For instance the polynomial ring $A = \R[x_1,...,x_n]$. If $I = (x_1,...,x_n) \subset A$ is the augmentation ideal, then the $I$-adic completion is $\hat{A} = \R[[x_1,...,x_n]]$, the ring of formal power series, so $\hat{A}$ is flat over $A$. Both $A$ and $\hat{A}$ are Noetherian commutative integral domains so their quotient fields are flat over each of them.

\end{document}